\tikzstyle{arrow} = [ultrathick,>=stealth]
\tikzstyle{block} = [rectangle, minimum width=3cm, minimum height=1cm, align=flush center, draw=black, thick]
\newcommand\RSloop{\@ifnextchar\bgroup\RSloopa\RSloopb}
\newcommand\RSloopa[1]{\bgroup\RSloop#1\relax\egroup\RSloop}
\newcommand\RSloopb[1]%
\newcommand\X{0}
\newcommand\RS[1]%
\newcommand\RSdef[1]{\expandafter\def\csname RS:#1\endcsname}
\newlength\RSu
\numberwithin{equation}{section}
\def\theequation{\arabic{section}.\arabic{equation}}
\newcommand{\be}{\begin{eqnarray}}
\newcommand{\ee}{\end{eqnarray}}
 \newtheorem{theorem}{Theorem}[section]
\newtheorem{lemma}[theorem]{Lemma}
\newtheorem{definition}[theorem]{Definition}
\newtheorem{corollary}[theorem]{Corollary}
\definecolor{darkergreen}{rgb}{0.0, 0.5, 0.0}
\def\E{\mathbb E}
\def\C{\mathcal C}
\def\t{\partial_t}
\def\T{\mathbb T^2}
\def\d{\mathrm{d}}
\def\Z{\mathbb Z^2}
\def\ge{\geqslant}
\def\geq{\geqslant}
\def\le{\leqslant}
\def\leq{\leqslant}
\begin{document}

\title[Strong convergence rates]{Strong convergence rates for full-discrete approximations of the stochastic Allen-Cahn equations on 2D torus}

\author{Ting Ma}
\address[T. Ma]{College of Mathematics and Physics, Chengdu University of Technology, Chengdu 610059,  China}
\email{matingting2008@yeah.net}

\author{Lifei Wang}
\address[L. Wang]{ School of Mathematical Sciences, Hebei Normal University, Shijiazhuang 050024, China}
\email{flywit1986@163.com}

\author{Huanyu Yang*}
\address[H. Yang]{College of Mathematics and Statistics, Chongqing University, Chongqing 401331, China; Key Laboratory of Nonlinear Analysis and its Applications (Chongqing University), Ministry of Education}
\email{yanghuanyu1992@outlook.com}

\thanks{
We would like to thank Professor Rongchan Zhu for helpful discussion. T. M. is grateful to the financial supports of the NSFC (No. 12101429). *H.Yang is the Corresponding author
}

\begin{abstract}
	
In this paper we construct  space-time full discretizations    of   stochastic Allen-Cahn equations driven by space-time white noise on  2D torus.
The approximations are implemented by  tamed exponential Euler discretization in time and spectral  Galerkin method in space.  We finally
obtain the convergence rates with the spatial order of  $\alpha-\delta$  and the temporal order of ${\alpha}/{6}-\delta$  in   $\mathcal C^{-\alpha}$ for $\alpha\in(0,1/3)$ and $\delta>0$ arbitrarily small.

\end{abstract}

\subjclass[2010]{60H15; 82C28}
\keywords{stochastic Allen-Cahn equations; convergence rates; Galerkin method; tamed exponential Euler discretizations}

\date{}

\maketitle

\tableofcontents

\section{Introduction}

Consider the stochastic Allen-Cahn equation on $\T$  given by
\begin{equation}
\label{initial-Eq}
\left\{
 \begin{aligned}
\t X&= \Delta X-X+:F(X):+\xi\text{~~in~} (0,\infty)\times\T,\\
X(0)&=X_0 \text{~~on~} \T.
 \end{aligned}
 \right.
\end{equation}
Here $\mathbb T^2$ is a torus of size $1$ in $\mathbb R^2$, $\xi$ is  space-time white noise on $(0,\infty)\times\T$ (see Definition \ref{WHITE-NOISE}), $\Delta$ is the Laplacian with periodic boundary conditions on $L^2(\T)$. $F(v)=\sum_{j=0}^3a_jv^{j}$ with $a_3<0$.
For space dimension $d=2$, equation \eqref{initial-Eq} is ill-posed. The space-time white noise becomes so singular,
that   the nonlinearity $F(X)$  requires a renormalization. These kinds of singular stochastic partial differential equations
(SPDEs) have received a lot of attention recently
(see e.g. \cite{DPD03,Hai14,GIP13}). Formally, $:F(X):=\sum_{j=0}^3a_jX^{:j:}$, where $X^{:k:}$ stands for  the $k$-th Wick power of $X$ ( see  Section \ref{set-Wick powers} for its definition).
\subsection{Background and motivation}\label{moti}
The present work aims to propose a   space-time full-discrete scheme to numerically solve    \eqref{initial-Eq} on $\T$ driven by  space-time white noise. Furthermore, the investigation delves into the analysis of strong convergence rates concerning both temporal and spatial approximations.
In recent decades, considerable research efforts have been dedicated to developing numerical methodologies for evolutionary stochastic partial differential equations (SPDEs). For a comprehensive overview, refer to \cite{BH19,BJ16,CHL17,FKLL16,G98,G99,GSS16, HJLP19,HJ14,HJS17,Y05,ZZ2015}, as well as the supplementary references therein.

In the case of SPDEs with superlinearly growing nonlinearities, such as   stochastic Allen-Cahn equations, traditional methods like the classical exponential Euler method and the linear-implicit Euler method exhibit a lack of convergence in temporal approximation. For a detailed analysis, refer to \cite{BHJKL19}.
The development and examination of approximation algorithms tailored to address SPDEs with superlinearly growing nonlinearities represent a vibrant area of research. Kovacs, Larsson, and Lindgren, in their work \cite{KLL15}, established temporal convergence rates through an Euler-type split-step approach. Additionally, the authors in \cite{BG20,BJJ19} explored convergence rates by introducing various splitting time discretization schemes.
Hutzenthaler, Jentzen, and Salimova, in \cite{HJS17}, derived robust convergence rates in both temporal and spatial domains by devising a nonlinearity-truncated approximation scheme. Similar numerical approximation strategies have been proposed in \cite{BGJK2017,JP15}, along with other pertinent references, for handling SPDEs with superlinearly growing nonlinearities. Recently, Wang introduced a novel tamed exponential Euler time discretization method based on spectral Galerkin projection in \cite{W20}.

A significant body of literature addresses the numerical analysis of the stochastic Allen-Cahn equations. For equations involving trace class noise, references such as \cite{BJJ19,CM19,KLL15} are relevant. For equations with multiplicative noise, works like \cite{KW23,MP18,QZX23} can be consulted. In the case of equations with less regular space-time white noise on one-dimensional space, references like \cite{BGJK2017,BG20,BJJ19,CM19,W20} provide valuable insights. Yan’s work in \cite{Y05} established convergence rates of space-time approximations for linear SPDEs driven by space-time white noise in $d$-dimensional space, where $d$ ranges from $1$ to $3$.
However, there is a scarcity of references on numerical analysis for SPDEs with superlinearly growing nonlinearities driven by space-time white noise in high-dimensional spaces. Existing studies such as \cite{TW2018,TW20182,ZZ2015,MW2010} have focused on spatial convergence of the stochastic Allen-Cahn equations (specifically the dynamical $\Phi^4_d$
  model for $d>1$) without delving into convergence rates. In our prior work \cite{MZ2019}, we obtained the spatial convergence rate of Galerkin approximations for \eqref{initial-Eq}.
 To the best of our knowledge, there is a gap in the literature regarding space-time approximation schemes for SPDEs with superlinearly growing nonlinearities driven by space-time white noise in high-dimensional spaces. Our objective is to propose and analyze numerical methods tailored for addressing these equations.

\subsection{Main methods and results}
\label{Statement of the main results}
Using Da Prato-Debussche's  trick, $X$ is said to solve the equation \eqref{initial-Eq} if
\begin{equation}\label{sum}
 X=Y+\bar Z,
\end{equation}
where $\bar Z$ is a solution to  the linear version of \eqref{initial-Eq} with the initial value $X_0$, i.e.
\begin{equation}
\label{ini1-Eq}
\left\{
 \begin{aligned}
\t \bar Z&= A \bar Z+\xi \text{~~in~} (0,\infty)\times\T,\\
\bar Z(0)&=X_0 \text{~~on~} \T,
 \end{aligned}
 \right.
\end{equation}
with $A=\Delta-I$,   and $Y$ solves
\begin{equation}
\label{ini2-Eq}
\left\{
 \begin{aligned}
\t Y&=A Y+\Psi(Y,\underline{ \bar Z}) \text{~~in~} (0,\infty)\times\T,\\
Y(0)&=0\text{~~on~} \T,
 \end{aligned}
 \right.
\end{equation}
with $\underline{z}:=(z,z^{:2:},z^{:3:})$ and
\begin{equation}\label{wick-YZ}
\Psi(y,\underline{z}):= \sum^3_{j=0} a_j \sum ^j_{k=0} \binom{j}{k} y^k {z}^{:j-k:}.
 \end{equation}
It turns out that $Y$  takes values in a Besov
space of   positive regularity. Hence, the nonlinear terms in \eqref{wick-YZ} can be well-defined  through multiplicative inequalities in these spaces.

This paper presents a space-time approximation scheme for solving the solution to \eqref{initial-Eq} using a spectral Galerkin method and a nonlinearity-tamed exponential Euler time discretization, as proposed in \cite{W20}. Unlike other tamed and truncated exponential Euler methods (see, for example, \cite{BGJK2017,CM19}), this full-discrete scheme is distinct in that it exclusively employs spectral approximation for the linear part $\bar Z$
  without any temporal approximation. The methodology introduced in \cite{W20} enhances the temporal convergence rate by a factor of two compared to existing approaches in the literature for the same one-dimensional problem. Therefore, we employ this scheme to numerically solve the equation \eqref{initial-Eq}. The scheme is outlined as follows.
For $N\in\mathbb N$, let $\bar Z^N$ be a solution to Galerkin approximation \eqref{ini1-Eq_N} of   \eqref{ini1-Eq}.  Furthermore, let $T>0$, for $M\in\mathbb N$   we set $\tau=T/M$ being the time stepsize and $t_k:=k\tau$, $k\in\{0,1,...,M\}$, and propose a space-time full discretization of \eqref{sum}   as  for $t\in[0,T]$
(cf. \eqref{nNMX})
\begin{equation*}\label{nnNMX}\begin{aligned}
 X_t^{N,M}=&\int^{t\vee \tau}_\tau\frac{P_N S_{t-s} \Psi(Y_{{\lfloor s\rfloor}_\tau}^{N,M},\underline{\bar Z}_{{\lfloor s\rfloor}_\tau}^{N} )}
 {1+\tau \|\Psi(Y_{{\lfloor s\rfloor}_\tau}^{N,M},\underline{\bar Z}_{{\lfloor s\rfloor}_\tau}^{N} )\| _{-\alpha}}  ds  +\bar Z^N_t=:Y^{N,M}_t+\bar Z^N_t,
       \end{aligned}
\end{equation*}
with the spectral  Galerkin operator $P_N$   defined by \eqref{pro-ope}. $S_t,t\ge0$ is the  semigroup  generated by $A$.  Here and below,  $\lfloor s\rfloor_\tau=t_k$ for any $s\in[t_k,t_{k+1})$. To overcome    the blow-up of $\bar Z_t$ for t close to 0 in a negative regularity  Besov space, we perform integration starting from a positive time  $\tau$  instead of 0 as described in \cite{W20}. Equivalently, in the approximation scheme we set the nonlinear term $Y^{N,M}\equiv0$ in the interval $[0,\tau]$ (refer to Section \ref{SEC4-11} for further details).
Below we present the main result, i.e. the convergence rates for the full-discrete approximation  of  equation \eqref{initial-Eq} (cf. Theorem \ref{MAIN-THM1111}).
\begin{theorem}
Let $X_0\in\mathcal C^{-\alpha}$ with $\alpha\in (0,1/3)$, $\gamma>1-3\alpha$ and $p\ge2$. Then for  any  $\delta>0$
\[
 \Big(\E \sup_{t\in[0, T]}
 t^{\gamma p}\|X_t-X_{t}^{N,M}\|_{-\alpha}^p\Big)^{1/p}  \lesssim    N^{\delta-\alpha}+   M^{\delta-{\alpha}/{6}}
 \]
 for uniform large $N,M\in \mathbb N$.
\end{theorem}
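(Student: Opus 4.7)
I would start from the decomposition $X=Y+\bar Z$ and its discrete counterpart $X^{N,M}=Y^{N,M}+\bar Z^N$, so that
\[
  X_t-X^{N,M}_t = (\bar Z_t-\bar Z^N_t) + (Y_t-Y^{N,M}_t),
\]
and treat the two pieces separately in $\mathcal{C}^{-\alpha}$. The linear error $\bar Z-\bar Z^N$ is a pure spectral Galerkin error for the Ornstein--Uhlenbeck process: splitting into the deterministic part $(I-P_N)S_t X_0$, bounded using $\|(I-P_N)S_t\|_{\mathcal{C}^{-\alpha}\to\mathcal{C}^{-\alpha}}\lesssim N^{\delta-\alpha}$, and a stochastic convolution term bounded via Nelson-type estimates and Wiener-chaos hypercontractivity, yields the spatial rate $N^{\delta-\alpha}$ in $L^p(\Omega;\mathcal{C}^{-\alpha})$. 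The same machinery delivers strong convergence of the renormalised Wick powers $\bar Z^{N,:k:}\to\bar Z^{:k:}$ at rate $N^{\delta-\alpha}$ in $C([0,T];\mathcal{C}^{-\alpha})$ together with temporal H\"older regularity of order $\alpha/2-\delta$; these ingredients (essentially in \cite{MZ2019}) feed into the nonlinear analysis.

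\textbf{A priori bound on $Y^{N,M}$ and mild error decomposition.} Before estimating $Y-Y^{N,M}$ I would establish a uniform-in-$(N,M)$ moment bound on $Y^{N,M}$ in a positive-regularity Besov space $\mathcal{C}^{\sigma}$, weighted by a suitable power of $t$ to accommodate the rough initial data. The taming factor $1+\tau\|\Psi(\cdot)\|_{-\alpha}$ caps each time increment by $\tau^{1-\delta}$ independently of the cubic nonlinearity, and a discrete Gr\"onwall argument in combination with the smoothing of $P_N S_{t-s}$ propagates this bound. Matching the mild formulation of $Y$ against the definition of $Y^{N,M}$ then produces, for $t\ge \tau$,
\begin{align*}
Y_t-Y^{N,M}_t &= \int_0^\tau S_{t-s}\Psi(Y_s,\underline{\bar Z}_s)\,ds + \int_\tau^t (I-P_N)S_{t-s}\Psi(Y_s,\underline{\bar Z}_s)\,ds\\
&\quad + \int_\tau^t P_N S_{t-s}\bigl[\Psi(Y_s,\underline{\bar Z}_s) - \Psi(Y_{\lfloor s\rfloor_\tau},\underline{\bar Z}^N_{\lfloor s\rfloor_\tau})\bigr]\,ds\\
&\quad + \int_\tau^t P_N S_{t-s}\bigl[\Psi(Y_{\lfloor s\rfloor_\tau},\underline{\bar Z}^N_{\lfloor s\rfloor_\tau}) - \Psi(Y^{N,M}_{\lfloor s\rfloor_\tau},\underline{\bar Z}^N_{\lfloor s\rfloor_\tau})\bigr]\,ds\\
&\quad + \int_\tau^t P_N S_{t-s}\frac{\tau\,\|\Psi(Y^{N,M}_{\lfloor s\rfloor_\tau},\underline{\bar Z}^N_{\lfloor s\rfloor_\tau})\|_{-\alpha}\,\Psi(Y^{N,M}_{\lfloor s\rfloor_\tau},\underline{\bar Z}^N_{\lfloor s\rfloor_\tau})}{1+\tau\|\Psi(Y^{N,M}_{\lfloor s\rfloor_\tau},\underline{\bar Z}^N_{\lfloor s\rfloor_\tau})\|_{-\alpha}}\,ds.
\end{align*}
The integral over $[0,\tau]$ is absorbed by the weighted a priori bound on $\Psi(Y_s,\underline{\bar Z}_s)$ near $s=0$, and this is precisely where the restriction $\gamma>1-3\alpha$ is needed. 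The Galerkin term contributes $N^{\delta-\alpha}$ through $\|(I-P_N)S_{t-s}\|_{\mathcal{C}^{\sigma-2}\to\mathcal{C}^{-\alpha}}$ and the multiplicative Besov inequalities applied to $\Psi$. The time-freezing term is controlled by H\"older regularity in time of $Y$ and of the Wick powers, together with the spatial rate on $\bar Z^{N,:k:}$. The taming bias is of order $\tau$ times two moments of $\|\Psi\|_{-\alpha}$ and is absorbed by the a priori estimates. The remaining piece is Lipschitz in $Y_{\lfloor s\rfloor_\tau}-Y^{N,M}_{\lfloor s\rfloor_\tau}$ and closes a weighted Gr\"onwall inequality.

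\textbf{Main obstacle: the temporal rate $\alpha/6$.} The delicate step is the temporal order. The Wick powers $\bar Z^{:k:}$ are H\"older only of exponent $\alpha/2-$ in time, and in the cubic term $Y^k\,\bar Z^{:3-k:}$ this roughness must be distributed across the three factors consistently with the multiplicative Besov inequality $\mathcal{C}^{-\alpha}\times\mathcal{C}^\sigma\hookrightarrow\mathcal{C}^{-\alpha}$ (which requires $\sigma>\alpha$). A careful interpolation between $\mathcal{C}^{-\alpha}$ and $\mathcal{C}^\sigma$ reveals that the best attainable time exponent for the full nonlinearity is $\alpha/6-\delta$; any sharper allocation breaks one of the multiplicative constraints. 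Once the five pieces above are inserted into the weighted Gr\"onwall inequality, taking $L^p$-norms and transferring from pointwise moments to $\E\sup_{t\in[0,T]}$ by a Kolmogorov-type chaining argument (using $p\ge 2$) yields the claimed rate $N^{\delta-\alpha}+M^{\delta-\alpha/6}$.
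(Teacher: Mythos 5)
Your proposal follows essentially the same route as the paper: the Da Prato--Debussche splitting $X-X^{N,M}=(\bar Z-\bar Z^N)+(Y-Y^{N,M})$, the Galerkin rates for $\bar Z$ and its Wick powers from \cite{MZ2019}, a uniform a priori bound on $Y^{N,M}$ exploiting the taming factor, and a mild decomposition of $Y-Y^{N,M}$ into Galerkin-projection, time-freezing, Lipschitz and taming-bias terms closed by a weighted Gronwall inequality in the norm $t^\gamma\|\cdot\|_\beta$ with $\beta$ slightly above $\alpha$. The only differences are cosmetic: the paper groups the Galerkin error of the Wick powers (at equal times) separately from the time-freezing of the discrete objects, and the temporal exponent $\alpha/6$ enters directly as the proved time-H\"older regularity of the third Wick power in $\mathcal C^{-\alpha}$ (Lemma~\ref{TZtN-ZsN0} and Theorem~\ref{Z-INI}, exponent $\tfrac{\alpha}{2n}-\delta$ for the $n$-th power) rather than via your interpolation heuristic from an $\alpha/2$ modulus.
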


Utilizing the decomposition \eqref{sum}, the initial step of our investigation involves evaluating the error between the linear term $\bar Z$
  and its Galerkin approximation $\bar Z^N$. This aspect has been extensively studied in the literature, with works such as \cite{RZZ2017, TW2018, TW20182} focusing on its convergence properties and \cite{MZ2019} analyzing its convergence rate. Following this, we delve into the time regularity of $\bar Z, \bar Z^N$ to further refine our estimate. The main challenge in our analysis lies in establishing the error between the nonlinear term $Y$ and its numerical approximations $Y^{N,M}$. Leveraging the smoothing properties of the semigroup $S_t, t \ge 0$
  generated by $A$, we are able to derive uniform a priori bounds for $Y^{N,M}$ in a Besov space with positive regularity (as detailed in Theorem \ref{Test-Y-HOLD-Th}). Notably, we approach the error estimate of $Y$ and $Y^{N,M}$
  in a Besov space of positive regularity $\beta$, employing two distinct norms: $\|\cdot\|_\beta$
  and $t^\gamma\|\cdot\|_\beta$ for some $\gamma > 0$ (refer to Theorems \ref{MAIN-THM1} and \ref{MAIN-THM1-2} respectively). This choice is motivated by the fact that the latter norm facilitates achieving superior convergence rates. Consequently, we establish both space and time convergence rates for the approximating scheme \eqref{nNMX}.

\subsection{Structure of the paper} \label{struc:sec}
This paper is organized as follows.
In Section \ref{subsec-BS} we collect results related to Besov spaces.


Section \ref{SEC3} is devoted to the regularity results for stochastic linear equation \eqref{ini1-Eq} and  its spectral Galerkin approximation \eqref{ini1-Eq_N}.


In Section \ref{Pathwise-error},  we begin with the construction of  a space-time full discrete scheme \eqref{nNMX} for  \eqref{initial-Eq}. Then we give the   the   convergence rates in time and space for full-discrete approximations of   \eqref{initial-Eq}, via uniform a priori bounds of   \eqref{Tmild-2-NM}.

\medskip


Throughout  the paper  we use the notations $a \lesssim b$ if there exists a constant $c > 0$ independent of the relevant quantities such that $a \leq cb$, $a\gtrsim b$ if $b\lesssim a$, and $a\backsimeq b$ if $a \lesssim b$ as well as $b\lesssim a$. We also introduce the notation $a \lesssim_{m} b$ to emphasize that the constant we omit depends on $m$. The constants may change from line to line and we omit  unless  necessary.

\section{Besov spaces and preliminaries}\label{subsec-BS}
We first recall Besov spaces from \cite{ZZ2015,MW2010}.  For general theory we refer to  \cite{BCD2011,Tri78,Tri06}.  For $p\in[1,\infty]$, let $L^p(\mathbb T^d)$ denote the usual $p$ integrable space on $\mathbb T^d$ with its norm denoted by $\|\cdot\|_{L^p}$.  The space of Schwartz functions on $\mathbb T^d$ is denoted by $\mathcal S (\mathbb T^d)$ and its dual, the space of tempered distributions is denoted by $\mathcal S'(\mathbb T^d)$. The space of real valued infinitely differentiable functions is denoted by $C^{\infty}(\mathbb T^d)$. For any function $f$ on $\mathbb T^d$, let ${\rm{supp}} (f)$ denote its support.

Consider the orthonormal basis $\{e_m\}_{m\in\mathbb Z^d}$ of trigonometric functions on $\mathbb T^d$
\begin{equation}\label{basis}
  e_m(x):=e^{\iota2\pi m \cdot x},  x\in\mathbb T^d,
\end{equation}
we write $L^2(\mathbb T^d)$ with its inner product $ \langle \cdot,\cdot\rangle$ given by
 \[
    \langle f,g\rangle=\int_{\mathbb T^d}f(x)\bar g(x)\d x,~~f,g\in L^2(\mathbb T^d).
 \]
Then for any $f\in L^2(\mathbb T^d)$, we denote by $\mathcal F f$ or $ \hat{f}$ its Fourier transform

\[
   \hat{f}(m)=\langle f, e_m\rangle=\int_{\mathbb T^d} e^{-\iota2\pi m \cdot x}f(x)\d x,~~m\in\mathbb Z^d.
\]
For  $N\in\mathbb N$, we define $P_N$ the projection operators from $L^2(\mathbb T^d)$ onto the space spanned by $ \{e_m,~|m|\leq N\}$ with $m=(m_1,\ldots,m_d)\in\mathbb Z^d$, $|m|:=\sqrt{m_1^2+\cdots+m^2_d}$. It means that
\begin{equation}\label{pro-ope}
P_N f=\sum_{m:|m|\leq N}\langle f, e_m\rangle ~e_m,~f\in L^2(\mathbb T^d).
\end{equation}

For $\zeta\in\mathbb R^d$ and $r > 0$ we denote by $B(\zeta, r)$ the ball of radius $r$ centered at $\zeta$ and let the annulus $\mathcal A:=B(0,\frac 8 3)\setminus B(0,\frac 3 4)$. According to \cite[Proposition 2.10]{BCD2011}, there exist nonnegative radial functions $\chi, \theta\in\mathcal D(\mathbb R^d)$, the space of real valued infinitely differentiable functions of compact support on $\mathbb R^d$, satisfying

(i) ${\rm {supp}}(\chi)\subset B(0,\frac 4 3),~{\rm {supp}}(\theta)\subset\mathcal A$;

(ii) $\chi(z)+\sum_{j\geq 0}\theta(z/{2^j})=1$ for all $z\in\mathbb R^d$;

(iii) ${\rm {supp}}(\chi)\cap {\rm {supp}}(\theta(\cdot/{2^j}))=\emptyset$ for $j\geq1$ and ${\rm {supp}}(\theta(\cdot/{2^i}))\cap {\rm {supp}}(\theta(\cdot/{2^j}))=\emptyset$ for $|i-j|>1$.

$(\chi,\theta)$ is called a dyadic partition of unity. The above decomposition can be applied to distributions on the torus (see \cite{S85,SW71}).  Let
\begin{equation}\label{TLPBs}
   \chi_{-1}:=\chi,~~\chi_{j}:=\theta(\cdot/{2^j}),~j\geq0.
\end{equation}
It is easy to see that
\begin{equation}\label{subset}
{\rm {supp}}(\chi_{j})\subset \mathcal A_{2^j}:=2^j \mathcal A,~ j\geq0;~ \mathcal A_{2^{-1}}\subset B(0,\frac 4 3).
\end{equation}
 For  $f\in C^\infty(\mathbb T^d)$, the $j$-Littlewood-Paley block is  defined as
\begin{equation}\label{LPBs}
\Delta_{j}f(x)=\sum_{m\in\mathbb Z^d}\chi_{j}(m)\hat{f}(m) e^{\iota2\pi m\cdot x},~j\geq-1.
\end{equation}
Note that \eqref{LPBs} is equivalent to the equality
\begin{equation}\label{LPBs2}
  \Delta_{j}f=\eta_j \ast f,~~j\geq-1,
\end{equation}
where
\[
\eta_j\ast f(\cdot)=\int_{\mathbb T^d} \eta_j (\cdot-x)f(x)dx,~~
\eta_j(x):=\sum_{m\in\mathbb Z^d}\chi_{j}(m) e^{\iota2\pi m \cdot x}.
\]

 Let $\alpha\in\mathbb R,~p,q\in[1,\infty]$. We define the Besov space on $\mathbb T^d$ denoted by $\mathcal B_{p,q}^{\alpha}(\mathbb T^d)$  as
 the completion of $C^\infty(\mathbb T^d)$ with respect to the norm (\cite[Proposition 2.7]{BCD2011})
\begin{equation}\label{B-SPACE}
  \|u\|_{\mathcal B_{p,q}^{\alpha}(\mathbb T^d)}:=(\sum_{j\geq-1} 2^{j\alpha q}\|\Delta_j u\|^q_{L^p})^{1/q},
\end{equation}
with the usual interpretation as $l^{\infty}$ norm in case $q=\infty$.
Note that for $p,q\in[1,\infty)$
\begin{equation*}
\begin{aligned}
   \mathcal B_{p,q}^{\alpha}(\mathbb T^d)&=\{u\in\mathcal S'(\mathbb T^d):~\|u\|^q_{ \mathcal  B_{p,q}^{\alpha}(\mathbb T^d)}<\infty\},\\
  \mathcal B_{\infty,\infty}^{\alpha}(\mathbb T^d)&\varsubsetneq\{u\in\mathcal S'(\mathbb T^d):~\|u\|^q_{\mathcal  B_{\infty,\infty}^{\alpha}(\mathbb T^d)}<\infty\}.
\end{aligned}
\end{equation*}
 Here we choose Besov spaces as completions of smooth functions on the torus,
which ensures that the Besov spaces are separable and has a lot of advantages for our analysis
below.

 In the following we give estimates on the torus for later use.

We recall the  Besov embedding theorems on the torus  (cf. \cite[Theorem 4.6.1]{Tri78}, \cite[Lemma A.2]{GIP13}, \cite[Proposition 3.11,Remark 3.3]{MW2010}).
\begin{lemma} \label{V-SUB-H}{\rm(Besov embedding)}\label{Bes-Smooth}
{\rm (i)} Let $\alpha\leq \beta\in\mathbb R$,  $p,q\in[1,\infty]$. Then $ \mathcal B_{p,q}^{\beta}(\mathbb T^d)$ is continuously embedded   in $ \mathcal B_{p,q}^{\alpha}(\mathbb T^d)$.

{\rm (ii)}~ Let $1\leq p_1\leq p_2\leq \infty$, $1\leq q_1\leq q_2\leq \infty$, and  $\alpha\in \mathbb R$. Then $\mathcal B_{p_1,q_1}^{\alpha}(\mathbb T^d)$ is continuously embedded in  $\mathcal B_{p_2,q_2}^{\alpha-d(1/p_1-1/p_2)}(\mathbb T^d)$.

\end{lemma}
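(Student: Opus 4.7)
The plan is to prove both inclusions directly from the Littlewood--Paley definition \eqref{B-SPACE}, treating (i) by a trivial comparison of dyadic weights and (ii) by combining the Bernstein-type inequality for functions with Fourier support in a ball with the monotonicity of the $\ell^q$ norms in $q$.

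For part (i), I would fix $\alpha\leq\beta$ and compare the two weights $2^{j\alpha q}$ and $2^{j\beta q}$ block by block. Since $\alpha-\beta\le 0$, the ratio $2^{j(\alpha-\beta)}$ is at most $2^{-1\cdot(\alpha-\beta)}=2^{\beta-\alpha}$ for every $j\ge-1$, so term-by-term one has $2^{j\alpha q}\le 2^{(\beta-\alpha)q}\,2^{j\beta q}$. Summing over $j\ge-1$ in \eqref{B-SPACE} (and taking the sup when $q=\infty$) immediately yields $\|u\|_{\mathcal B_{p,q}^\alpha(\mathbb T^d)}\le 2^{\beta-\alpha}\|u\|_{\mathcal B_{p,q}^\beta(\mathbb T^d)}$.

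For part (ii), the main input is the Bernstein inequality: if $g\in\mathcal S'(\mathbb T^d)$ has Fourier support in a ball of radius $\lesssim 2^j$ and $1\le p_1\le p_2\le\infty$, then $\|g\|_{L^{p_2}}\lesssim 2^{jd(1/p_1-1/p_2)}\|g\|_{L^{p_1}}$, with the case $j=-1$ absorbed into the constant. This is a standard consequence of Young's inequality applied to the convolution representation \eqref{LPBs2}, once one notes that ${\rm supp}(\chi_j)\subset\mathcal A_{2^j}$ by \eqref{subset} and that the smooth cut-off $\chi_j$ can be replaced by a fattened version $\tilde\chi_j$ on its support, giving $\Delta_j u=\tilde\eta_j\ast\Delta_j u$ with $\|\tilde\eta_j\|_{L^r}\simeq 2^{jd(1-1/r)}$ for the appropriate $r$ determined by $1/r=1+1/p_2-1/p_1$. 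Applied to $g=\Delta_j u$, this yields
\[
2^{j(\alpha-d(1/p_1-1/p_2))}\|\Delta_j u\|_{L^{p_2}}\lesssim 2^{j\alpha}\|\Delta_j u\|_{L^{p_1}}.
\]
Taking the $\ell^{q_2}(j\ge-1)$ norm of both sides gives $\|u\|_{\mathcal B_{p_2,q_2}^{\alpha-d(1/p_1-1/p_2)}}\lesssim \big(\sum_j 2^{j\alpha q_2}\|\Delta_j u\|_{L^{p_1}}^{q_2}\big)^{1/q_2}$. Finally, since $q_1\le q_2$, the embedding $\ell^{q_1}\hookrightarrow\ell^{q_2}$ (with norm $1$) bounds the right-hand side by $\|u\|_{\mathcal B_{p_1,q_1}^{\alpha}}$, which concludes the proof.

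I do not expect any real obstacle: part (i) is just a scalar comparison and part (ii) is textbook (the torus version is identical in spirit to the $\mathbb R^d$ version, with the Littlewood--Paley blocks defined via the same dyadic partition of unity as in \eqref{TLPBs}--\eqref{LPBs}). The only point requiring a small amount of care is the Bernstein inequality itself on $\mathbb T^d$, which must be stated for distributions with Fourier support in a ball of radius $\lesssim 2^j$ rather than in an annulus; this is why one uses a fattened cut-off $\tilde\chi_j\equiv 1$ on ${\rm supp}(\chi_j)$ to ensure $\tilde\eta_j\ast\Delta_j u=\Delta_j u$. Once that is in place, both embeddings follow by routine dyadic manipulations and $\ell^q$ monotonicity, so the proof is essentially a transcription of standard Besov-space arguments (as in \cite{BCD2011,Tri78}) adapted to the torus.
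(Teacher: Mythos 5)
Your proposal is correct. The paper does not prove this lemma at all---it is quoted from the literature (Triebel, Gubinelli--Imkeller--Perkowski, Mourrat--Weber)---so there is no in-paper argument to compare against; your proof is exactly the standard one found in those references: part (i) by the termwise comparison $2^{j\alpha q}\le 2^{(\beta-\alpha)q}2^{j\beta q}$ for $j\ge-1$, and part (ii) by Bernstein's inequality on each Littlewood--Paley block (via Young's inequality with a fattened cut-off, valid on the torus by Poisson summation) followed by the monotonicity $\ell^{q_1}\hookrightarrow\ell^{q_2}$. The only cosmetic remark is that $\|\tilde\eta_j\|_{L^r}\simeq 2^{jd(1-1/r)}$ could be weakened to $\lesssim$, which is all that is used.
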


We describe the  Schauder estimates, i.e. the smoothing effect of the heat flow, as measured in Besov spaces (cf. \cite[Propositions 3.11,3.12]{MW2010}, \cite[Lemmas A.7,A.8]{GIP13}).

\begin{lemma}{\rm (Schauder estimates)}\label{Heat-Smooth1}
{\rm(i)} Let $f\in\mathcal B_{p,q}^{\alpha}(\mathbb T^d)$ for some $\alpha\in\mathbb R,~p,q\in[1,\infty]$.
Then for every $\delta>0$,  uniformly over $t>0$
\begin{equation}\label{Heat-Smooth}
\|e^{tA} f\|_{\mathcal B_{p,q}^{\alpha+\delta}(\mathbb T^d)}\lesssim t^{-\frac{\delta}{2}}\|f\|_{\mathcal B_{p,q}^{\alpha}(\mathbb T^d)}.
\end{equation}
{\rm (ii)} Let $\alpha\leq \beta\in\mathbb R$ be such that $\beta-\alpha\leq2$, $f\in \mathcal B_{p,q}^{\beta}(\mathbb T^d)$ and $p,q\in[1,\infty]$. Then uniformly over $t>0$
\begin{equation}\label{Heat-time-reg}
\|({\rm I}-e^{tA}) f\|_{\mathcal B_{p,q}^{\alpha}(\mathbb T^d)}\lesssim t^{\frac{\beta-\alpha}{2}}\|f\|_{\mathcal B_{p,q}^{\beta}(\mathbb T^d)}.
\end{equation}
\end{lemma}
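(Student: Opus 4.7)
The plan is to prove both Schauder estimates by working dyadic block by dyadic block, exploiting the fact that $e^{tA}$ and $I-e^{tA}$ act as Fourier multipliers whose restriction to each Littlewood-Paley piece $\Delta_j f$ is well-controlled because $\mathrm{supp}(\chi_j)\subset \mathcal{A}_{2^j}$. Concretely, on the torus, for any Fourier multiplier $m(\cdot)$ one has $\Delta_j(m(\mathrm{D})f) = K^{m}_j \ast \Delta_j f$, where $K^{m}_j$ is the trigonometric polynomial with Fourier coefficients $\chi_j(\cdot)m(\cdot)$, and Young's convolution inequality gives $\|\Delta_j(m(\mathrm{D})f)\|_{L^p}\le \|K^{m}_j\|_{L^1}\|\Delta_j f\|_{L^p}$. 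The case $q=\infty$ is handled by replacing $\ell^q$-sums by $\ell^\infty$-suprema throughout, with no change of argument.

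For part (i), take $m(\xi)=e^{-t(4\pi^2|\xi|^2+1)}$. A scaling argument at scale $2^j$ — combined with the exponential factor $e^{-c t 2^{2j}}$ coming from $m$ on $\mathrm{supp}(\chi_j)\subset \mathcal{A}_{2^j}$ — yields $\|K^{m}_j\|_{L^1}\lesssim e^{-c t 2^{2j}}$ for $j\ge 0$, uniformly in $t>0$ (with the trivial bound $\le 1$ for $j=-1$). Plugging this into the definition \eqref{B-SPACE} gives
$$\|e^{tA}f\|_{\mathcal B^{\alpha+\delta}_{p,q}}^q = \sum_{j\ge -1} 2^{j(\alpha+\delta)q}\|\Delta_j e^{tA}f\|_{L^p}^q \lesssim \sum_{j\ge -1} 2^{j\alpha q}\bigl(2^{j\delta} e^{-c t 2^{2j}}\bigr)^q \|\Delta_j f\|_{L^p}^q,$$
and the elementary bound $\sup_{j\ge-1} 2^{j\delta} e^{-c t 2^{2j}}\lesssim t^{-\delta/2}$ (obtained from the boundedness of $x\mapsto x^{\delta/2}e^{-cx}$) closes the estimate.

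For part (ii), apply the same multiplier analysis with $m(\xi)=1-e^{-t(4\pi^2|\xi|^2+1)}$. The corresponding kernel estimate gives $\|K^{m}_j\|_{L^1}\lesssim 1-e^{-c t 2^{2j}}\lesssim \min(1, t\,2^{2j})$. Since the hypothesis $0\le \beta-\alpha\le 2$ forces $0\le (\beta-\alpha)/2\le 1$, the interpolation $\min(1,t\,2^{2j})\le (t\,2^{2j})^{(\beta-\alpha)/2}$ holds uniformly in $j$ and $t$. Therefore
$$\|(I-e^{tA})f\|_{\mathcal B^{\alpha}_{p,q}}^q \lesssim \sum_{j\ge -1} 2^{j\alpha q}(t\,2^{2j})^{(\beta-\alpha)q/2}\|\Delta_j f\|_{L^p}^q = t^{(\beta-\alpha)q/2}\|f\|_{\mathcal B^{\beta}_{p,q}}^q,$$
which after taking $q$-th roots yields \eqref{Heat-time-reg}.

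The only non-routine ingredient is the uniform $L^1$-bound on the kernels $K^{m}_j$. On $\mathbb R^d$ this is standard (scaling plus integration by parts on a smooth compactly supported symbol), and the passage to $\mathbb T^d$ presents no essential obstacle: either via the Poisson summation formula, or by directly noting that $\mathcal A_{2^j}\cap\mathbb Z^d$ has $O(2^{jd})$ lattice points and that the symbol's smoothness transfers to rapid decay of its trigonometric-polynomial inverse after rescaling. This is the main technical step, and once it is established, both inequalities follow from the two short computations above.
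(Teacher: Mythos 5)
The paper does not actually prove this lemma: it is quoted from the literature (the cited Propositions 3.11--3.12 of Mourrat--Weber and Lemmas A.7--A.8 of Gubinelli--Imkeller--Perkowski), and your block-by-block Fourier-multiplier argument is precisely the standard proof given in those references, so your route coincides with the intended one and both displayed computations are correct. Two small repairs are needed. First, with $K_j^{m}$ defined through the coefficients $\chi_j(\cdot)m(\cdot)$, the convolution $K_j^{m}\ast\Delta_j f$ has Fourier coefficients $\chi_j(k)^2m(k)\hat f(k)$ rather than $\chi_j(k)m(k)\hat f(k)$; you should instead use a fattened bump $\tilde\chi_j$ equal to $1$ on $\mathrm{supp}(\chi_j)$ and supported in a slightly larger annulus, and define $K_j^{m}$ with coefficients $\tilde\chi_j(\cdot)m(\cdot)$, after which $\Delta_j(m(\mathrm{D})f)=K_j^{m}\ast\Delta_j f$ holds exactly. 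Second, the lemma asserts uniformity over all $t>0$: handling $j=-1$ with the trivial bound $\|K_{-1}^{m}\|_{L^1}\le 1$ forces you to check $2^{-\delta}\lesssim t^{-\delta/2}$, which fails as $t\to\infty$; since $A=\Delta-I$ carries a mass term, the symbol on the block $j=-1$ is still bounded by $e^{-t}$, so the estimate $\|K_j^{m}\|_{L^1}\lesssim e^{-ct2^{2j}}$ in fact holds for every $j\ge-1$ and closes the argument uniformly in $t$ (for $t$ ranging over a bounded interval, which is all the paper uses, your version already suffices). With these adjustments the proof is complete.
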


The following multiplicative inequalities play a central role  later and we  treat separately for  cases of positive and negative regularity (cf.  \cite[Corollaries 3.19,3.21]{MW2010}, \cite[Lemma 2.1]{GIP13}).

\begin{lemma}{\rm (Multiplicative inequalities)}\label{Multi-ineq}
{\rm (i)} Let $\alpha>0$ and $p,p_1,p_2\in[1,\infty]$ be such that ${1}/{p}={1}/{p_1}+{1}/{p_2}$. Then
\begin{equation}\label{Multi-ineq-1}
\|fg\|_{\mathcal B_{p,q}^{\alpha}(\mathbb T^d)}\lesssim \|f\|_{ \mathcal B_{p_1,q}^{\alpha}(\mathbb T^d)}\|g\|_{\mathcal B_{p_2,q}^{\alpha}(\mathbb T^d)}.
\end{equation}
{\rm (ii)} Let $\beta>0>\alpha$ be such that $\beta+\alpha>0$, and $p,p_1,p_2\in[1,\infty]$ be such that $ {1}/{p}= {1}/{p_1}+{1}/{p_2}$. Then
\begin{equation}\label{Multi-ineq-2}
\|fg\|_{\mathcal B_{p,q}^{\alpha}(\mathbb T^d)} \lesssim
\|f\|_{\mathcal B_{p_1,q}^{\alpha}(\mathbb T^d)}\|g\|_{\mathcal B_{p_2,q}^{\beta}(\mathbb T^d)}.
\end{equation}
\end{lemma}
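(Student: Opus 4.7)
The plan is to establish both inequalities by Bony's paraproduct decomposition. Writing
$$fg = T_f g + T_g f + \Pi(f,g),$$
with low-high paraproduct $T_f g := \sum_{j\ge 0} S_{j-1}f \cdot \Delta_j g$ (where $S_{j-1}f := \sum_{i\le j-2}\Delta_i f$) and resonant term $\Pi(f,g) := \sum_{|i-j|\le 1}\Delta_i f\,\Delta_j g$, the first step will be a Fourier support analysis based on \eqref{subset} and property (iii) of the dyadic partition: each summand $S_{j-1}f\cdot\Delta_j g$ of $T_f g$ has spectrum in a dilated annulus of scale $2^j$, so $\Delta_k(T_f g)$ is nonzero only when $k\simeq j$; each summand $\Delta_i f\,\Delta_j g$ of $\Pi$ with $|i-j|\le 1$ has spectrum in a ball of scale $2^j$, so $\Delta_k \Pi(f,g)$ only picks up indices $j\ge k-c$ for some fixed $c$.

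For each piece, I would apply H\"older's inequality pointwise with $1/p = 1/p_1 + 1/p_2$, multiply by the appropriate dyadic weight, and take $\ell^q$ in $k$. In part (i), where both factors have regularity $\alpha > 0$, I would sum the geometric series $\|S_{j-1}f\|_{L^{p_1}}\lesssim \sum_{i\le j-2} 2^{-i\alpha}\bigl(2^{i\alpha}\|\Delta_i f\|_{L^{p_1}}\bigr)\lesssim \|f\|_{\mathcal B^{\alpha}_{p_1,q}}$ to bound $T_f g$, and symmetrically for $T_g f$; for the resonant piece, factoring out both Besov weights leaves a kernel $2^{(k-j)\alpha}$ over $j\ge k-c$, which is summable precisely because $\alpha > 0$, so discrete Young's inequality in $\ell^q$ concludes.

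For part (ii), the asymmetric paraproducts are still benign once one keeps track of regularities: $T_g f$ uses $\beta > 0$ to get $\|S_{j-1}g\|_{L^{p_2}}\lesssim \|g\|_{\mathcal B^\beta_{p_2,q}}$ uniformly in $j$, yielding $\|T_g f\|_{\mathcal B^\alpha_{p,q}}\lesssim \|f\|_{\mathcal B^\alpha_{p_1,q}}\|g\|_{\mathcal B^\beta_{p_2,q}}$; $T_f g$ (with $f$ of negative regularity $\alpha$) instead gains $\|S_{j-1}f\|_{L^{p_1}}\lesssim 2^{-j\alpha}\|f\|_{\mathcal B^\alpha_{p_1,q}}$, showing that $T_f g$ lives in $\mathcal B^{\alpha+\beta}_{p,q}$ which embeds into $\mathcal B^\alpha_{p,q}$ by Lemma \ref{Bes-Smooth} since $\alpha < \alpha+\beta$. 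The crux and main obstacle is the resonant term, where the argument of part (i) no longer works: instead one bounds
$$2^{k\alpha}\|\Delta_k\Pi(f,g)\|_{L^p}\lesssim \sum_{j\ge k-c} 2^{(k-j)\alpha - j\beta}\bigl(2^{j\alpha}\|\Delta_j f\|_{L^{p_1}}\bigr)\bigl(2^{j\beta}\|\Delta_j g\|_{L^{p_2}}\bigr),$$
and observes that the exponent $k\alpha - j(\alpha+\beta)$ is geometrically summable in $j\ge k-c$ exactly when $\alpha+\beta>0$; an $\ell^q$-Young step and the trivial bound $\|\cdot\|_{\ell^\infty}\le\|\cdot\|_{\ell^q}$ then deliver the claim. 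If $\alpha+\beta\le 0$ the sum genuinely diverges, reflecting the fact that such low-regularity products are only definable after extra algebraic or probabilistic structure, such as the Wick renormalization used elsewhere in the paper.
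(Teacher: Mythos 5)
Your proposal is correct and is the standard Bony paraproduct argument: the decomposition $fg=T_fg+T_gf+\Pi(f,g)$, the spectral localization of each piece, H\"older with $1/p=1/p_1+1/p_2$, and the observation that the resonant term lands in $\mathcal B^{\alpha+\beta}_{p,q}$ precisely when $\alpha+\beta>0$. The paper gives no proof of its own but cites \cite[Corollaries 3.19, 3.21]{MW2010} and \cite[Lemma 2.1]{GIP13}, which establish the lemma by exactly this route, so your argument matches the intended one.
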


\begin{lemma}{\rm \cite[Proposition A.11]{TW2018}}\label{est-P-PN-P}
Let $P_N,N\in\mathbb N$ be defined in (\ref{pro-ope}). Then for every $\alpha\in\mathbb R$, $p,q\in[1,\infty]$ and $\lambda>0$
\begin{equation}\label{est-PN}
\|P_Nf\|_{\mathcal B^{\alpha}_{p,q}(\mathbb T^2)}\lesssim \|f\|_{\mathcal B^{\alpha+\lambda}_{p,q}(\mathbb T^2)},
\end{equation}
\begin{equation}\label{est-P-PN}
\|P_Nf-f\|_{\mathcal B^{\alpha}_{p,q}(\mathbb T^2)}\lesssim\frac{(\log N)^2}{N^{\lambda}}\|f\|_{\mathcal B^{\alpha+\lambda}_{p,q}(\mathbb T^2)}.
\end{equation}
\end{lemma}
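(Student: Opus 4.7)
My plan is a Littlewood--Paley decomposition that exploits the fact that $P_N$ and $\Delta_j$ commute (both are Fourier multipliers, so $\Delta_jP_N$ has symbol $\chi_j(m)\mathbf{1}_{\{|m|\le N\}}$). The geometry of the support $\mathcal A_{2^j}$ relative to the disc $\{|m|\le N\}$ then splits the index $j$ into three regimes. Setting $j_0(N):=\lfloor\log_2(3N/8)\rfloor$ and $j_1(N):=\lceil\log_2(4N/3)\rceil$, I would observe that if $j\le j_0(N)$ the annulus lies inside the disc, so $\Delta_jP_Nf=\Delta_jf$; if $j\ge j_1(N)$ the two are disjoint, so $\Delta_jP_Nf=0$; only an $O(1)$ number of ``borderline'' indices between these two thresholds survive.

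For \eqref{est-PN}, I would split the Besov sum accordingly: the low-$j$ part is bounded above by $\|f\|_{\mathcal B^\alpha_{p,q}}^q$, the high-$j$ part is zero, and the borderline part consists of $O(1)$ terms which I control via
\[
\|\Delta_jP_Nf\|_{L^p}=\|P_N\Delta_jf\|_{L^p}\lesssim (\log N)^2\,\|\Delta_jf\|_{L^p},
\]
the polylogarithmic $L^p$-operator bound for the disc projection on frequency-localized functions. Since $\lambda>0$, the embedding $\mathcal B^{\alpha+\lambda}_{p,q}\hookrightarrow\mathcal B^\alpha_{p,q}$ from Lemma~\ref{Bes-Smooth}(i) (together with a trivial $(\log N)^2 N^{-\lambda}\le C$ absorption from the borderline) delivers the claim. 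For \eqref{est-P-PN}, the symbol of $\Delta_j(P_N-\mathrm{Id})$ is $\chi_j(m)\mathbf{1}_{\{|m|>N\}}$, so the same support discussion shows that $\Delta_j(P_Nf-f)$ vanishes for $j\le j_0(N)$ and equals $-\Delta_jf$ for $j\ge j_1(N)$. Since every surviving index satisfies $2^j\gtrsim N$, the elementary identity
\[
2^{j\alpha}\|\Delta_jf\|_{L^p}=2^{-j\lambda}\cdot 2^{j(\alpha+\lambda)}\|\Delta_jf\|_{L^p}
\]
extracts a factor $2^{-j\lambda}\lesssim N^{-\lambda}$ from the sum, while the remainder, summed in $\ell^q$, is bounded by $\|f\|_{\mathcal B^{\alpha+\lambda}_{p,q}}$. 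The borderline terms add an extra $(\log N)^2$ from the same bound above, reproducing \eqref{est-P-PN}.

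The main obstacle is the treatment of the borderline regime: because $\mathbf{1}_{\{|m|\le N\}}$ is a \emph{sharp} disc cutoff, standard Mikhlin/Marcinkiewicz-type multiplier theorems give no uniform $L^p$-bound for $p\ne 2$, and Fefferman's disc theorem warns against such a bound in the continuous setting. On $\mathbb T^2$ the operator is of finite rank, but pinning down its growth in $N$ still reduces to a Dirichlet-kernel computation of the form $\|K_N\|_{L^1(\mathbb T^2)}\lesssim (\log N)^2$ where $K_N$ is the kernel associated with $P_N$ (or to an atomic-decomposition argument for the characteristic function of the disc); this is the technical step that forces the $(\log N)^2$ loss in the statement, and is the content of \cite[Proposition A.11]{TW2018}, which is what is being invoked here.
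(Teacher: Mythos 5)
The paper offers no proof of this lemma---it is quoted verbatim from \cite[Proposition A.11]{TW2018}---and your Littlewood--Paley three-regime decomposition (low blocks where $\Delta_jP_N=\Delta_j$, high blocks where $\Delta_jP_N=0$, and $O(1)$ borderline blocks controlled by the $L^1$ bound on the kernel of $P_N$ plus the factor $2^{-j\lambda}\lesssim N^{-\lambda}$) is exactly the argument behind that cited result; you have also correctly isolated the kernel estimate as the only nontrivial step. One caveat: the bound $\|K_N\|_{L^1(\mathbb T^2)}\lesssim(\log N)^2$ comes from factorizing $K_N$ into a product of two one-dimensional Dirichlet kernels, which requires the \emph{rectangular} truncation $\{|m_1|\le N,\,|m_2|\le N\}$; for the Euclidean-ball truncation literally written in \eqref{pro-ope} the Lebesgue constant of the spherical partial sum grows like $N^{1/2}$, so your final reduction (and, strictly speaking, the lemma as transplanted into this paper) only goes through after reading $P_N$ as the product cutoff used in \cite{TW2018}. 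This is precisely the difficulty your aside on Fefferman's disc theorem was pointing at, and it is a defect of the citation rather than of your argument.
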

Throughout the paper we mainly use   the Besove space with $p=q=\infty$ to study  the equations on $\mathbb T^2$. For the simplicity of natations, for any $\alpha\in\mathbb R$ and $p,q\in[1,\infty)$, let
\begin{equation*}\label{nota}
\mathcal B^{\alpha}_{p,q}:=\mathcal B^{\alpha}_{p,q}(\mathbb T^2),~~\mathcal B^{\alpha}_p:=\mathcal B^{\alpha}_{p,\infty}(\mathbb T^2),~~\mathcal C^{\alpha}:=\mathcal B_{\infty,\infty}^{\alpha}(\mathbb T^2)
\end{equation*}
and we denote their norms by $\|\cdot\|_{\mathcal B^{\alpha}_{p,q}},\|\cdot\|_{\mathcal B^{\alpha}_p}$ and $\|\cdot\|_{{\alpha}}$, respectively.

\section{Stochastic heat equation}\label{SEC3}

As we outlined in the introduction, the solutions to the original stochastic Allen-Cahn equation \eqref{initial-Eq} correspond to the solutions $\bar{Z}$
  of the stochastic heat equation \eqref{ini1-Eq} and the solutions $Y$ of the PDE \eqref{ini2-Eq} with random coefficients derived from the Wick power of $\bar{Z}$. To define the approximation of the stochastic Allen-Cahn equation \eqref{initial-Eq}, we must first construct approximations for $\bar{Z}$ and its Wick power.

In this section, we present a construction of solutions to the stochastic heat equation \eqref{ini1-Eq} and its Wick powers in a Besov space with negative regularity. Initially, we construct solutions to \eqref{ini1-Eq} with an initial value of 0, following the approach in \cite{MW2010,TW20182}. Subsequently, we extend our construction to address the case of an initial value $X_0$ with negative regularity, defining solutions to \eqref{ini1-Eq} and their associated Wick powers, as discussed in \cite{MZ2019}.
We then delve into the analysis of the regularity properties of these solutions and their Galerkin approximations, along with their respective Wick powers.
\subsection{ Wick powers and Galerkin approximation}\label{set-Wick powers}

Let $(\Omega,\mathcal F,\mathbb P)$ be a probability space, $\xi$ is space-time white noise on $\mathbb R\times \T$. Set
\begin{equation*}\label{fil}
\tilde{\mathcal F}_t:=\sigma\big(\{\xi(\phi):~\phi|_{(t,\infty)\times\T}\equiv0,~\phi\in L^2((-\infty,\infty)\times \T)\}\big)
\end{equation*}
for $t>-\infty$ and denote by $(\mathcal F_t)_{t>-\infty}$ the usual augmentation (see \cite[Chapter 1.4]{RY1999}) of the filtration $(\tilde{\mathcal F}_t)_{t>-\infty}$.
For $n=1,2,3$,  consider the multiple stochastic integral given by
\begin{equation}\label{HEAT-MILD-s-23}
Z_{-\infty,t}^{:n:}(\phi):=\int_{\{(-\infty,t]\times\T\}^n}\langle\phi,\Pi_{i=1}^n H(t-s_i,x_i-\cdot)\rangle\xi(\otimes_{i=1}^n \d s_i,\otimes_{i=1}^n \d x_i)
\end{equation}
for every $t\in\mathbb R$ and $\phi\in C^{\infty}(\T)$. Here $H(r,\cdot),~r\neq0$, stands for the periodic heat kernel associated to the generator $A=\Delta-{\rm I}$ on $\T$ given by
\begin{equation}\label{ker-perid}
H(r,x):=\sum_{m\in\Z}e^{-rI_m}e_m(x),~x\in \T,~r\in\mathbb R \setminus \{0\},
\end{equation}
with $I_m:=1+4\pi^2|m|^2$ and $e_m=e^{\iota2\pi m\cdot}$, $m\in\Z$.
$Z^{:n:}_{-\infty,\cdot}$ is called the $n$-th Wick power of $Z_{-\infty,\cdot}$. Let $S_t,t\ge0$  denote  the  semigroup  associated to  $A$ in $L^2(\T)$. Using Duhamel's principle (cf. \cite[Section 2.3]{Ev10}), we have that
\[
   Z_{t}:=Z_{-\infty,t}-S_tZ_{-\infty,0},~t\geq 0,
\]
which solves the linear equation with zero initial condition, i.e.
\begin{equation}\label{HEAT-s}
\left\{
 \begin{aligned}
\t Z&=AZ+\xi~~\text{in~}(0,\infty)\times\T,\\
Z({0})&=0~~\text{on~}\T.
 \end{aligned}
 \right.
\end{equation}
Moreover, we set for $n=1, 2,3$
\begin{equation}\label{Z230twick}
\begin{aligned}
    Z^{:n:}_{t}:=\sum_{k=0}^n\binom{n}{k}(-1)^k\Big(S_tZ_{-\infty,0}\Big)^kZ_{-\infty,t}^{:n-k:},
\end{aligned}
\end{equation}
by letting $ z^{:1:}=z$ and $z^{:0:}=1$.
As discussed in \cite{TW2018}, we continue to define the spatial Galerkin approximation $Z^N$ of $Z$ and its wick powers $(Z^N)^{:n:}$, $n=1,2,3$. Denote by
 \begin{equation} \label{Zst2-3-APP}\begin{aligned}
   Z^N_{-\infty,t}:=P_NZ_{-\infty,t},~~
   ( Z^N_{-\infty,t})^{:2:}:= (Z^N_{-\infty,t})^2-  \mathfrak R^N,~\\
   ( Z^N_{-\infty,t})^{:3:}:= (Z^N_{-\infty,t})^3-  3\mathfrak R^NZ^N_{-\infty,t},~~~~~~~~
\end{aligned}\end{equation}
with the projectors $P_N,N\in\mathbb N$ given in \eqref{pro-ope}, and  the renormalization  constants
\begin{equation*}\label{renor-cosnt}
    \mathfrak R^N:=\|1_{[0,\infty)}P_NH\|_{L^2(\mathbb R\times \T)}^2,
\end{equation*}
which diverges  logarithmically as $N$ goes to $\infty$.
Comparing with \eqref{Z230twick}, we similarly define for  $n=1,2,3$
\begin{equation}\label{ZN1230t-wick}
\begin{aligned}
    (Z_{t}^N)^{:n:}:=\sum_{k=0}^n\binom{n}{k}(-1)^k\Big(S_tZ_{-\infty,0}^N\Big)^k(Z_{-\infty,t}^N)^{:n-k:},t\geq 0.
\end{aligned}
\end{equation}
In particular,
$Z_{t}^N$ solves approximating equation with  initial value zero:
\begin{equation}
\label{ini10-Eq_N}
\left\{
 \begin{aligned}
\t  Z^N&= P_N A Z^N+P_N\xi  \text{~~in~} (0,\infty)\times\T,\\
 Z^N(0)&=0 \text{~~on~} \T.
 \end{aligned}
 \right.
\end{equation}

Now  we combine the initial value part with the Wick powers,   as described in \cite{ MZ2019}. Let $X_0\in\C^{-\alpha},\alpha\in(0,1)$, we set   for $n=1,2,3$
\begin{equation}\label{TZN1230t-wick-ini}
\begin{aligned}
  (\bar Z_t^{N})^{:n:}:&=\sum_{k=0}^n\binom{n}{k} (P_NS_tX_0)^{k} ( Z_t^{N})^{:n-k:},\\
  \bar Z_t ^{:n:}:&=\sum_{k=0}^n\binom{n}{k} ( S_tX_0)^{k}  Z_t^{:n-k:}.
  \end{aligned}
\end{equation}
 The above equalities are well-defined by \eqref{Multi-ineq-2}, and by \eqref{Heat-Smooth} and \eqref{est-PN} which imply that the terms $S_t X_0$,$P_NS_t X_0$ belong to $\C^{\beta}$ for every $\beta>\alpha $.
In particular, when $n=1$, $\bar Z$ is a solution to  \eqref{ini1-Eq} and $\bar Z^N$  solves
\begin{equation}
\label{ini1-Eq_N}
\left\{
 \begin{aligned}
\t \bar Z^N&= P_N A \bar Z^N+P_N\xi  \text{~~in~} (0,\infty)\times\T,\\
\bar Z^N(0)&=P_NX_0 \text{~~on~} \T.
 \end{aligned}
 \right.
\end{equation}

\subsection{Regularity results } \label{SUB3-3}
Let $T>0$ and consider the initial value $X_0\in\mathcal C^{-\alpha}$ with $\alpha\in(0,1)$. Recall that the processes $Z_{-\infty,t}$, $Z_t$, $\bar Z_t$ for $t\in[0,T]$, along with their Galerkin approximations and Wick powers, are defined in Section \ref{set-Wick powers}.
In this subsection, we establish that these processes are well-defined elements in Besov spaces of negative regularity. Similar results have been shown in \cite[Lemma 3.4]{RZZ2017}, \cite[Proposition 7.4]{TW2018}, and \cite[Propositions 2.2, 2.3]{TW20182}.
Furthermore, we derive the time regularity properties of the aforementioned processes in the same space, which are essential for further estimates.

\begin{lemma}\label{Tmodefi-Z}
Let $\alpha\in(0,1)$ and $p\ge 2$. Then for every $n=1,2,3$ and  $N\in\mathbb N$, the processes $Z_{-\infty,\cdot}^{:n:}, (Z^{N}_{-\infty,\cdot})^{:n:}$   defined by  \eqref{HEAT-MILD-s-23} and \eqref{Zst2-3-APP} belong to  $ C([0,T]; \C^{-\alpha})$ $\mathbb P$-a.s. Moreover, we have
\begin{equation}\label{TZN^nt-lp}
 \sup_{N\in\mathbb N}   \E \sup_{0\leq t\leq T}  \Big\{  \|Z_{-\infty,t}^{:n:}\|_{ {-\alpha}}^p,
      \|(Z^{N}_{-\infty,t})^{:n:}\|_{ {-\alpha}}^p\Big \}<\infty.
\end{equation}
As a consequence, let the processes $Z^{:n:},(Z^{N})^{:n:},n=1,2,3$ be defined by \eqref{Z230twick} and \eqref{ZN1230t-wick}, for  every $\alpha'>0$
\begin{equation}\label{Test-ZN-HOLD}
   \sup_{N\in\mathbb N} \E \sup_{0\leq t\leq T}  t^{(n-1){\alpha}'p}  \Big\{
         \|Z_{t}^{:n:}\|_{ {-\alpha}}^p,  \|(Z^{N}_{t})^{:n:}\|_{ {-\alpha}}^p
        \Big \}<\infty.
\end{equation}
\end{lemma}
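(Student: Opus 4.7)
The plan is to treat the two assertions in sequence, exploiting the Wiener-chaos structure. For the stationary Wick powers $Z^{:n:}_{-\infty,\cdot}$ and their Galerkin analogues in \eqref{TZN^nt-lp}, at each space-time point these objects are genuine elements of the $n$-th inhomogeneous Wiener chaos generated by $\xi$. Since $\C^{-\alpha}=\mathcal B^{-\alpha}_{\infty,\infty}$ is awkward for direct Gaussian hypercontractivity, I would first estimate the processes in $\mathcal B^{-\alpha'}_{p,p}$ for some $\alpha'<\alpha$ and $p$ large enough that the embedding of Lemma \ref{V-SUB-H}(ii) supplies $\mathcal B^{-\alpha'}_{p,p}\hookrightarrow\C^{-\alpha}$. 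By \eqref{B-SPACE} it then suffices to control $\E\|\Delta_j Z^{:n:}_{-\infty,t}\|_{L^p}^p$; Nelson's hypercontractivity reduces this to $(\E|\Delta_j Z^{:n:}_{-\infty,t}(x)|^2)^{p/2}$, and via the periodic heat kernel \eqref{ker-perid} together with the multiple-Wiener-integral isometry the variance becomes an explicit Fourier sum
\[
\E|\Delta_j Z^{:n:}_{-\infty,t}(x)|^2 = n!\sum_{m_1,\ldots,m_n\in\Z}\bigl|\chi_j(m_1+\cdots+m_n)\bigr|^2\prod_{i=1}^n\frac{1}{2I_{m_i}},
\]
which in two dimensions can be bounded by $C\,2^{j\varepsilon}$ for arbitrarily small $\varepsilon>0$ (the Wick renormalization exactly absorbs the diagonal logarithmic divergence). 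Choosing $\varepsilon<\alpha-\alpha'$ gives the pointwise-in-$t$ moment bound; the Galerkin version is identical with the sums restricted to $|m_i|\le N$, yielding the same constant uniformly in $N$, which is precisely the role of $\mathfrak R^N$.

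To promote the pointwise-in-$t$ moment bounds to the supremum in $t$, I would invoke Kolmogorov's continuity theorem after establishing a time-increment estimate of the form $\E\|Z^{:n:}_{-\infty,t}-Z^{:n:}_{-\infty,s}\|_{-\alpha}^p\lesssim|t-s|^{\kappa p}$ for some $\kappa>0$; the factor $e^{-tI_m}-e^{-sI_m}$ appearing in the covariance of the increment supplies the $|t-s|^\kappa$ in exchange for shrinking $\alpha'$ slightly further. The same argument, uniform in $N$, then yields \eqref{TZN^nt-lp}.

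For \eqref{Test-ZN-HOLD}, I expand by \eqref{Z230twick},
\[
Z^{:n:}_t-Z^{:n:}_{-\infty,t}=\sum_{k=1}^n\binom{n}{k}(-1)^k(S_tZ_{-\infty,0})^k Z^{:n-k:}_{-\infty,t}.
\]
The first part, applied with $n=1$ and arbitrarily small regularity index, shows that $Z_{-\infty,0}\in\C^{-\alpha_0}$ almost surely with moments of all orders for every $\alpha_0>0$. Lemma \ref{Heat-Smooth1}(i) then gives $\|S_tZ_{-\infty,0}\|_{\beta}\lesssim t^{-(\beta+\alpha_0)/2}\|Z_{-\infty,0}\|_{-\alpha_0}$ for any $\beta$. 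Choosing $\beta$ slightly above $\alpha$ and $\alpha_0$ very small, Lemma \ref{Multi-ineq}(i) controls the iterated product $(S_tZ_{-\infty,0})^k$ in $\mathcal C^\beta$, and Lemma \ref{Multi-ineq}(ii) then multiplies by the negative-regularity factor $Z^{:n-k:}_{-\infty,t}$ to land in $\mathcal C^{-\alpha}$. The resulting blow-up $t^{-k(\beta+\alpha_0)/2}$ is absorbed by the weight $t^{(n-1)\alpha'}$ upon choosing $\alpha'$ appropriately, and H\"older's inequality in $\omega$ combines the finite moments of $\|Z_{-\infty,0}\|_{-\alpha_0}$ and $\|Z^{:n-k:}_{-\infty,t}\|_{-\alpha}$ supplied by the first part. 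The Galerkin version is handled in exactly the same way using $P_NS_tX_0=S_tP_NX_0$ together with \eqref{est-PN}, keeping all constants independent of $N$.

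The main technical hurdle is the uniform-in-$N$ covariance estimate with explicit $j$-dependence: the delicate logarithmic behaviour in two dimensions has to cancel exactly against $\mathfrak R^N$ in order to extract the $2^{j\varepsilon}$ factor, and the time-increment variant needed for Kolmogorov continuity is the most intricate point. Everything else reduces to routine bookkeeping with the Schauder and multiplicative estimates.
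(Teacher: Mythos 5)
Your treatment of \eqref{TZN^nt-lp} is sound and is essentially the route the paper relies on (it defers to \cite{MZ2019}, whose argument is exactly the one you describe and which reappears in this paper's proof of Lemma \ref{TZtN-ZsN0}): compute the second-moment of $\Delta_j$ of the pure $n$-th chaos via the covariance sum, bound it by $2^{2j\epsilon}$ using Lemma \ref{KENEL-ES}, upgrade to $p$-th moments by \eqref{iTO-iEQ}, pass from $\mathcal B^{-\alpha'}_{p,p}$ to $\C^{-\alpha}$ by Lemma \ref{V-SUB-H}(ii) with $p$ large, and obtain the supremum in $t$ together with continuity from a Kolmogorov argument based on the increment covariance. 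No objection there, and the uniformity in $N$ is indeed automatic since the Galerkin sums are dominated by the full sums.

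There is, however, a genuine gap in your derivation of \eqref{Test-ZN-HOLD}. You propose to bound the cross terms $(S_tZ_{-\infty,0})^k Z^{:n-k:}_{-\infty,t}$ by putting $(S_tZ_{-\infty,0})^k$ in $\C^{\beta}$ with $\beta$ \emph{slightly above} $\alpha$ (which is forced if you insist on pairing it with $Z^{:n-k:}_{-\infty,t}\in\C^{-\alpha}$ via Lemma \ref{Multi-ineq}(ii), since that lemma requires $\beta-\alpha>0$), incurring a blow-up $t^{-k(\beta+\alpha_0)/2}\gtrsim t^{-\alpha/2}$, and then to absorb this "by choosing $\alpha'$ appropriately". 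But the statement quantifies $\alpha'$ universally: \eqref{Test-ZN-HOLD} must hold for \emph{every} $\alpha'>0$, however small, so a blow-up exponent of order $\alpha/2$ (a fixed positive number) cannot be absorbed by the weight $t^{(n-1)\alpha'}$. As written, your estimate only proves the claim for $\alpha'\gtrsim\alpha$.

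The fix is to exploit the arbitrarily small negative regularity of \emph{all} the stationary Gaussian objects, not just of $Z_{-\infty,0}$. Given $\alpha'>0$, apply the first part of the lemma with regularity index $\epsilon\ll\alpha'$ in place of $\alpha$, so that $Z_{-\infty,0}$ and $Z^{:n-k:}_{-\infty,t}$ are controlled in $\C^{-\epsilon}$ with uniform $p$-th moments. Then Lemma \ref{Heat-Smooth1}(i) gives $\|S_tZ_{-\infty,0}\|_{2\epsilon}\lesssim t^{-3\epsilon/2}\|Z_{-\infty,0}\|_{-\epsilon}$, Lemma \ref{Multi-ineq}(i) handles the $k$-fold product in $\C^{2\epsilon}$ with blow-up $t^{-3k\epsilon/2}$, and Lemma \ref{Multi-ineq}(ii) (now with $2\epsilon-\epsilon>0$) places the cross term in $\C^{-\epsilon}\hookrightarrow\C^{-\alpha}$. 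Choosing $\epsilon$ so small that $3n\epsilon/2\le(n-1)\alpha'$ (the case $n=1$ needs no weight at all, since $\|S_tZ_{-\infty,0}\|_{-\alpha}\lesssim\|Z_{-\infty,0}\|_{-\alpha}$) and finishing with H\"older in $\omega$ yields \eqref{Test-ZN-HOLD} for every $\alpha'>0$. This is precisely the parameter bookkeeping the paper carries out in the analogous estimates in the proof of Lemma \ref{TZtN-ZsN0}, where the initial-value factors are measured in $\C^{-\epsilon}$ and $\C^{2\epsilon}$ rather than at the level $\alpha$.
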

\begin{proof} See \cite[Lemmas 3.2,3.3]{MZ2019}.
\end{proof}
\begin{lemma}
Let  $X_0\in\C^{-\alpha}$ with $\alpha\in(0,1)$, and the processes $\bar Z ^{:n:}$,$(\bar Z^N)^{:n:}$,$n=1,2,3$ be defined by \eqref{TZN1230t-wick-ini}. Then  for  every  $p\ge 2$ and $\kappa>0$
\begin{equation}\begin{aligned}\label{Test-ZN-HOLD-INI2}
   & \E \sup_{0\leq t\leq T}t^{(n-1)(\alpha+\kappa)  p}    \|(\bar Z_{t})^{:n:}\|_{-\alpha}^p<\infty, \\
      \sup_{N\in\mathbb N} & \E \sup_{0\leq t\leq T}t^{\{(n-1)\alpha +\kappa\} p}
      \|(\bar Z^N_{t})^{:n:}\|_{-\alpha}^p  <\infty.
 \end{aligned}\end{equation}
\end{lemma}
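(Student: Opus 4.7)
The plan is to exploit the binomial decomposition \eqref{TZN1230t-wick-ini}, which writes each renormalized power as a finite linear combination
\[
(\bar Z_t)^{:n:}=\sum_{k=0}^{n}\binom{n}{k}(S_t X_0)^{k}\,Z_t^{:n-k:}
\]
(and its Galerkin counterpart), and then to estimate each summand separately in $\C^{-\alpha}$. All randomness sits in the Wick powers $Z_t^{:n-k:}$ and $(Z_t^N)^{:n-k:}$, whose moments are already controlled uniformly in $N$ by \eqref{Test-ZN-HOLD}, so the task reduces to deterministic multiplicative estimates for the initial-data part $S_t X_0$ (respectively $P_N S_t X_0$). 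The summand $k=0$ is exactly the Wick power $Z_t^{:n:}$, for which \eqref{Test-ZN-HOLD} (applied with exponent $\alpha'>0$ arbitrarily small) delivers a weight strictly smaller than both $t^{(n-1)(\alpha+\kappa)}$ and $t^{(n-1)\alpha+\kappa}$.

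For the summands with $1\le k\le n-1$ I would split regularities: place $(S_t X_0)^{k}$ into a Besov space of positive regularity $\C^{\beta}$ with $\beta\in(\alpha,\alpha+\kappa_0)$ for a small auxiliary parameter $\kappa_0$, and pair it with $Z_t^{:n-k:}\in\C^{-\alpha}$ via the mixed-regularity multiplicative inequality \eqref{Multi-ineq-2}. Combining Lemma \ref{Multi-ineq}(i) with the Schauder estimate \eqref{Heat-Smooth} yields the pointwise bound
\[
\|(S_t X_0)^{k} Z_t^{:n-k:}\|_{-\alpha}\lesssim t^{-k(\beta+\alpha)/2}\|X_0\|_{-\alpha}^{k}\,\|Z_t^{:n-k:}\|_{-\alpha}.
\]
For the summand $k=n$ I would apply the same splitting with one crucial refinement: write $(S_t X_0)^{n}=(S_t X_0)^{n-1}\cdot S_t X_0$, put only the $(n-1)$-fold factor in $\C^{\beta}$, and keep the last single copy in $\C^{-\alpha}$, which by \eqref{Heat-Smooth} (with arbitrarily small loss $\delta/2$) is controlled by $t^{-\delta/2}\|X_0\|_{-\alpha}$. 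This produces only $n-1$ Schauder blow-ups and gives $\|(S_t X_0)^{n}\|_{-\alpha}\lesssim t^{-(n-1)(\beta+\alpha)/2-\delta/2}\|X_0\|_{-\alpha}^{n}$. The analogous bounds hold uniformly in $N$ for the Galerkin versions thanks to the uniform boundedness of $P_N$ on every $\C^{\beta}$ given by \eqref{est-PN}.

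Inserting these deterministic bounds into the binomial sum, raising to the $p$-th power, and taking expectation and the supremum in $t\in[0,T]$, both finiteness statements in \eqref{Test-ZN-HOLD-INI2} reduce to checking that for each $0\le k\le n$ the residual exponent of $t$ is non-negative. A short computation produces residuals of the form $(n-k-1)(\alpha-\alpha')+\kappa-k\kappa_0/2$ for the second (Galerkin) bound and $(n-k-1)(\alpha-\alpha')+(n-1)\kappa-k\kappa_0/2$ for the first; both are non-negative once $\alpha'$, $\kappa_0$ and $\delta$ are taken small enough in terms of $\kappa$. The one subtle point to watch is why the sharper weight $t^{(n-1)\alpha+\kappa}$ of the second bound can be afforded at all: the worst singularity is produced by $k=n$, and the refinement in the previous paragraph saves precisely one full Schauder factor $t^{-(\beta+\alpha)/2}$; without it one would be forced into the larger weight $t^{(n-1)(\alpha+\kappa)}$.
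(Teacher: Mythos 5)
Your argument is correct. Note that the paper itself gives no internal proof of this lemma — it simply cites \cite[Theorem 3.5]{MZ2019} — so there is nothing in the text to compare line by line; but your route (binomial decomposition \eqref{TZN1230t-wick-ini}, deterministic Schauder bounds on $(S_tX_0)^k$ via \eqref{Heat-Smooth} and \eqref{Multi-ineq-1}, pairing with the Wick powers through \eqref{Multi-ineq-2}, uniformity in $N$ from \eqref{est-PN}, and the already-established moment bounds \eqref{Test-ZN-HOLD} for the $k=0$ factor) is exactly the machinery the paper deploys for the companion time-regularity statements in Lemma \ref{TZtN-ZsN0} and Theorem \ref{Z-INI}, so it is in effect the intended proof. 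Your key refinement for $k=n$ — writing $(S_tX_0)^n=(S_tX_0)^{n-1}\cdot S_tX_0$ and leaving one factor in $\C^{-\alpha}$ so that only $n-1$ Schauder singularities appear — is the right move and mirrors the weights $t^{\frac k2\alpha+\cdots}$ appearing in the proof of Theorem \ref{Z-INI}; one could alternatively rebalance regularities as the paper does there (placing $Z_t^{:n-k:}$ in $\C^{\frac{k-n}{n}\alpha}$, which is available since the stationary Wick powers have regularity $0^-$), but your balancing suffices for the stated weights. The only inaccuracy is your closing remark: without the $k=n$ refinement the naive bound produces a singularity of order $n\alpha+O(\kappa_0)$, which for small $\kappa$ exceeds \emph{both} $(n-1)\alpha+\kappa$ and $(n-1)(\alpha+\kappa)$, so the refinement is needed for the first estimate as well, not only for the sharper Galerkin weight; this does not affect the validity of the proof.
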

\begin{proof}
See \cite[Theorem 3.5]{MZ2019}.
\end{proof}
Below we mainly discuss the time regularity properties of $(\bar{Z}^N)^{:n:}$, $n=1,2,3$,   which are essential for the subsequent estimation. In view of \eqref{ZN1230t-wick} and \eqref{TZN1230t-wick-ini}, our initial focus is  on discussing the relative properties of $(Z_{-\infty,\cdot}^N)^{:n:}$ and $(Z^N)^{:n:}$.

\begin{lemma}\label{TZtN-ZsN0}
Let the processes $(Z^{N}_{-\infty,\cdot})^{:n:}$,$n=1,2,3$  be defined by  \eqref{Zst2-3-APP}.  Then for every   $\alpha\in (0,1)$, $p\ge2$ and $\delta\in(0,\frac\alpha{2n})$
\begin{equation}\label{TZNM^nt-sp}
  \sup_{N\in\mathbb N}   \E \sup_{0\leq  s<t\leq T} {(t-s)^{(\delta-\frac{\alpha}{2n})p}} \|(Z^{N}_{-\infty,t})^{:n:}-(Z^{N}_{-\infty,s})^{:n:}\|_{ {-\alpha}}^p  <\infty.
\end{equation}
Moreover, let the processes $(Z^{N})^{:n:},n=1,2,3$ be defined by \eqref{ZN1230t-wick}. 
For every $\epsilon>0$
\begin{equation}\label{Test-ZNT-S-HOLD}
       \sup_{N\in\mathbb N}   \E \sup_{0\leq s<t\leq T}  \frac { s^{(\epsilon+
       \frac\alpha {2n})(n-1)p} } {(t-s)^{(\frac{\alpha}{2n}-\delta)p}}
          \|(Z^{N}_{t})^{:n:}-(Z^{N}_{s})^{:n:}\|_{ {-\alpha}}^p   <\infty.
\end{equation}
\end{lemma}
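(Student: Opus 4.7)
The plan is to prove both estimates via the standard Wiener-chaos/Littlewood--Paley/Kolmogorov strategy. Three key tools are at play: Gaussian hypercontractivity, which makes all $L^p(\Omega)$-norms comparable to the $L^2(\Omega)$-norm on the $n$-th Wiener chaos (where both $(Z^N_{-\infty,t})^{:n:}$ and $(Z^N_t)^{:n:}$ live); the Besov embedding $\mathcal B^{-\alpha+\kappa}_{p,p}\hookrightarrow\C^{-\alpha}$ from Lemma \ref{V-SUB-H}(ii), valid for $p$ large and $\kappa>2/p$; and a Kolmogorov continuity argument on $\{0\le s<t\le T\}$ to convert pointwise-in-$(s,t)$ moment bounds into the required supremum bounds. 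Schauder estimates (Lemma \ref{Heat-Smooth1}) and the multiplicative inequalities (Lemma \ref{Multi-ineq}) handle the deterministic factors appearing in $(Z^N_t)^{:n:}$.

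For \eqref{TZNM^nt-sp}, fix $0\le s<t\le T$, $x\in\T$ and $j\ge-1$. Realise $(Z^N_{-\infty,t})^{:n:}(x)$ as an $n$-fold Wiener integral $I_n(K^{N,t}_x)$ with symmetric kernel $K^{N,t}_x(\mathbf r,\mathbf y)=\prod_{i=1}^{n}\mathbf 1_{r_i\le t}P_NH(t-r_i,x-y_i)$, so that $\Delta_j$ applied to the increment is again in the $n$-th chaos. It\^o's isometry gives
\[
\E\bigl|\Delta_j\bigl((Z^N_{-\infty,t})^{:n:}-(Z^N_{-\infty,s})^{:n:}\bigr)(x)\bigr|^{2}=n!\,\|\eta_j\star(K^{N,t}_x-K^{N,s}_x)\|_{L^2}^{2}.
\]
An explicit heat-kernel computation, parallel to \cite[Lemmas 3.2--3.3]{MZ2019} and \cite[Proposition 2.2]{TW20182}, bounds this by $2^{2j(\alpha-\nu)}(t-s)^{\alpha/n-\nu}$ uniformly in $N,x,j$ for every $\nu>0$. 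Hypercontractivity then produces the $p$-th moment bound, and integration over $x$, summation over $j$ with weight $2^{jp(-\alpha+\kappa)}$, the Besov embedding, and a Kolmogorov continuity argument deliver \eqref{TZNM^nt-sp}.

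For \eqref{Test-ZNT-S-HOLD}, apply the same scheme directly to $(Z^N_t)^{:n:}$, which is an $n$-fold Wiener integral over the restricted domain $((0,t)\times\T)^n$ with kernel $\tilde K^{N,t}_x(\mathbf r,\mathbf y)=\prod_{i=1}^n\mathbf 1_{0<r_i\le t}P_NH(t-r_i,x-y_i)$. Split the domain of each $r_i$ into $(0,s]$ and $(s,t]$: the kernel difference $\tilde K^{N,t}_x-\tilde K^{N,s}_x$ decomposes into contributions classified by the subset of indices whose $r_i$ lies in $(s,t]$. Contributions with at least one such index produce the Hölder factor $(t-s)^{\alpha/(2n)-\delta}$ (with no singular $s$-weight) from integration over a short time interval; the contribution with no such index---where all $r_i\in(0,s]$---uses the identity $P_NH(t-r_i,\cdot)-P_NH(s-r_i,\cdot)=(S_{t-s}-I)P_NH(s-r_i,\cdot)$, Lemma \ref{Heat-Smooth1}(ii), and the remaining $(n-1)$ time integrations over $(0,s]$ to yield exactly the singular weight $s^{-(n-1)(\alpha/(2n)+\epsilon)}(t-s)^{\alpha/(2n)-\delta}$ after choosing the regularity parameters $\beta,\eta,\nu$ optimally. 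Hypercontractivity and Kolmogorov continuity then finish the argument; uniformity in $N$ is preserved throughout because $P_N$ enters only via bounded $L^2$-type norms of smoothed heat kernels.

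The principal difficulty is the second part. A naive telescoping of \eqref{ZN1230t-wick} followed by Schauder--multiplicative estimates on each factor $(S_tZ^N_{-\infty,0})^k$ over-produces a singular weight $s^{-k(\alpha+\beta)/2}$ with $\beta>\alpha$, which is strictly worse than $s^{-(n-1)(\alpha/(2n)+\epsilon)}$ even after interpolation against the pointwise bound \eqref{Test-ZN-HOLD}. The correct exponent only emerges after the Wick cancellations built into \eqref{ZN1230t-wick} are exploited, which is cleanest at the level of the $n$-fold Wiener integral on the truncated domain $((0,t)\times\T)^n$, as outlined above; the remaining bookkeeping---distributing the Schauder-type decay among the $n$ kernel factors while retaining the correct Hölder exponent $\alpha/(2n)-\delta$ for the increment---is straightforward but somewhat intricate.
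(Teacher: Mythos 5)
Your treatment of \eqref{TZNM^nt-sp} is essentially the paper's proof: the second moment of $\Delta_j$ applied to the increment is computed in the $n$-th chaos via the heat-kernel convolution bounds of Lemma \ref{KENEL-ES}, hypercontractivity \eqref{iTO-iEQ} upgrades this to $p$-th moments, and the embedding $\mathcal B^{-\alpha+2/p}_{p,p}\hookrightarrow\C^{-\alpha}$ combined with Kolmogorov's criterion gives the supremum bound. No issues there.

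For \eqref{Test-ZNT-S-HOLD}, however, your proposal rests on a false premise. You assert that telescoping the identity \eqref{ZN1230t-wick} and applying Schauder and multiplicative estimates ``over-produces'' a weight $s^{-k(\alpha+\beta)/2}$ with $\beta>\alpha$ and therefore cannot reach the stated exponent. That telescoping is exactly the paper's proof, and it works: the point is not to measure the lower-order Wick power $(Z^{N}_{-\infty,\cdot})^{:n-k:}$ in $\C^{-\alpha}$ but in the better space $\C^{\frac{k-n}{n}\alpha}$, which is legitimate since the $(n-k)$-th Wick power lies in $\C^{-\alpha'}$ for every $\alpha'>0$, and, applying \eqref{TZNM^nt-sp} with $(\alpha,n)$ replaced by $(\frac{n-k}{n}\alpha,\,n-k)$, its increment still carries the H\"older exponent $\frac{1}{2(n-k)}\cdot\frac{n-k}{n}\alpha=\frac{\alpha}{2n}$, independent of $k$. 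The multiplicative inequality \eqref{Multi-ineq-2} then only requires $(S_tZ^{N}_{-\infty,0})^k\in\C^{\frac{n-k}{n}\alpha+\epsilon}$, whose Schauder cost is $t^{-\frac{k(n-k)}{2n}\alpha-k\epsilon}$; since $k(n-k)\le n-1$ for $k\in\{1,\dots,n-1\}$ and $n\le3$, this fits inside the allowed singularity $s^{-(n-1)(\frac{\alpha}{2n}+\epsilon)}$. Your alternative route (a single $n$-fold Wiener integral over $((0,t]\times\T)^n$ with a time-domain splitting) is plausible in principle, but as written it has two gaps: first, the identification of $(Z^{N}_t)^{:n:}$ as defined in \eqref{ZN1230t-wick} with that truncated-domain integral is not immediate, because the binomial sum uses the plain powers $(S_tZ^{N}_{-\infty,0})^k$ and so produces deterministic lower-chaos corrections (already visible for $n=2$) whose time increments would need their own estimate; second, the crux of your argument --- that the contribution with all $r_i\in(0,s]$ yields exactly the weight $s^{-(n-1)(\alpha/(2n)+\epsilon)}(t-s)^{\alpha/(2n)-\delta}$ --- is asserted rather than derived, and this is where all the work lies. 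It is the kernel-level analogue of precisely the interpolation of regularities that you dismissed at the Besov level.
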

\begin{proof}
 Due to  \cite[(3.11)]{MZ2019} we have for any $s,t\in[0,T]$, $x\in\mathbb T^2 $
\begin{equation}\label{nnnNM-ZsZt}
\begin{aligned}
 & \E\Delta_{j}(Z^{N}_{-\infty,s})^{:n:}(x)\Delta_{j}(Z^{N}_{-\infty,t})^{:n:}(x)
    \backsimeq
   n! \sum_{\substack{m_1\in\mathcal A_{2^{j}},\\ m_1\in\mathbb Z^2}} \sum_{\substack{|m_i|\leq N,\\ m_i\in\mathbb Z^2,i=2,\ldots,n}}
     \prod^n_{i=1}
  \frac{  e^{-|t-s|I_{m_i-m_{i-1}}}} {2 I_{m_i-m_{i-1}}}
\end{aligned}
\end{equation}
with  the convention that $m_0 = 0$, $I_m=1+4\pi^2|m|^2$, $e_m=e^{\iota2\pi m\cdot}$ for $m\in\Z$, and $\Delta_j,j\ge-1$ given in \eqref{LPBs}. Then for  $K_\gamma(m):=\frac{1}{(1+|m|^2)^{1-\gamma}},\gamma\in(0,1)$, it can be shown that 
\begin{equation*}
\label{s neq t}
 \E\big|\Delta_{j}(Z^{N}_{-\infty,s})^{:n:}(x)-\Delta_{j}(Z^{N}_{-\infty,t})^{:n:}(x)\big|^2
\lesssim  |s-t|^{\gamma}
   \sum_{m\in\mathcal A_{2^{j}},m\in\mathbb Z^2} K^\gamma \star^{n}_{\leq N}
   K^\gamma (m),
\end{equation*}
where $K^\gamma \star^{n}_{\leq N}
   K^\gamma$ is defined in  \eqref{<Nn}.
Using Lemma  \ref{KENEL-ES}, we have for  any $\lambda>0$ and $\gamma\in(0,1/ n)$
\begin{equation*}\begin{aligned}
 \E \big|\Delta_{j}(Z^{N}_{-\infty,s})^{:n:}(x)-\Delta_{j}(Z^{N}_{-\infty,t})^{:n:}(x)\big|^2
 \lesssim&
   \sum_{m\in\mathcal A_{2^{j}},m\in\mathbb Z^2} \frac{ |t-s|^{\gamma}}{(1 + |m|^2)^{1-n\gamma}}
\end{aligned}
\end{equation*}
 uniformly for  $x\in\T$, $s,t\in[0,T]$, $j\geq -1$  and  $N\in \mathbb N$. Considering that $|m|\lesssim 2^{j}$ for $m\in\mathcal A_{2^j}$ in \eqref{subset},
 together with \eqref{iTO-iEQ} we  further have   for any $\gamma\in(0, 1 /n )$ and $p\geq 2$
\begin{equation*}\label{Ttime-reg-Z2}
\E \big|\Delta_{j}(Z^{N}_{-\infty,s})^{:n:}(x)-\Delta_{j}(Z^{N}_{-\infty,t})^{:n:}(x)\big|^p
 \lesssim |s-t|^{\frac{\gamma p}{2}}  2^{j n\gamma p },
\end{equation*}
where the constants we omit are independent of $N$ and variables $s,t,x$. Then by \eqref{B-SPACE}
and the embedding $\mathcal B_{p,p}^{-\alpha+\frac{2}{p}}\hookrightarrow\C^{-\alpha}$ with any $\alpha>\frac{2}{p}$, the Kolmogorov's criterion implies that for any $\alpha>{n\gamma}+\frac{2}{p}$ and any $\tilde{\delta}\in(0,\frac{\gamma}{2}-\frac{1}{p})$,
$$
\sup_{0\leq s\leq t\leq T}|s-t|^{-\tilde{\delta}}\|(Z^{N}_{-\infty,t})^{:n:}-(Z^{N}_{-\infty,s})^{:n:}\|_{ {-\alpha}}^p <\infty, \mathbb{P}-a.s..
$$
Since $p$ could be chosen as large as possible, the above estimate holds for any $\alpha>n\gamma$ and any $\tilde{\delta}\in (0,\frac{\alpha}{2n})$. Therefore, we deduce \eqref{TZNM^nt-sp} for all $p\geq 2$ by setting $\delta:=\frac{\alpha}{2n}-\delta$.

We continue to prove \eqref{Test-ZNT-S-HOLD}. Let $\delta\in(0,\frac \alpha{2n})$ and $\epsilon>0$ small enough.  Applying  Lemma \ref{Multi-ineq}  to \eqref{ZN1230t-wick}, we have for $n=2,3$
\begin{equation*} \begin{aligned}
\|(Z_{t}^{N})^{:n:}&- (Z_{s}^{N})^{:n:}\|_{-\alpha}
 \lesssim       \sum_{k=1}^{n-1}  \Big\{ \|S_tZ_{-\infty,0}^{N}\|_{\frac{n-k}{n}\alpha+\epsilon}^{k} \|(Z_{-\infty,t}^{N})^{:n-k:}-(Z_{-\infty,s}^{N})^{:n-k:}\|_{\frac{k-n}{n}\alpha}
      \\ &    +   \|(S_t-S_s)Z_{-\infty,0}^{N}\|_{2\epsilon}  \|S_sZ_{-\infty,0}^{N}\|_{2\epsilon}^{k-1}  \|(Z_{-\infty,s}^{N})^{:n-k:}\|_{-\epsilon} \Big\} \\
   &   +  \|(Z_{-\infty,t}^{N})^{:n:}-(Z_{-\infty,s}^{N})^{:n:}\|_{-\alpha}
        +   \|(S_t-S_s)Z_{-\infty,0}^{N}\|_{-\epsilon}  \|S_sZ_{-\infty,0}^{N}\|_{2\epsilon}^{n-1},
\end{aligned}
\end{equation*}
where, on the right-hand side, according to Lemma  \ref{Heat-Smooth1} for every $k=1,...,n-1$ we have
\[
   t^{k\epsilon +\frac{k(n-k)}{2n}\alpha}     \|S_tZ_{-\infty,0}^{N}\|^k_{\frac{n-k}{n}\alpha+\epsilon}\lesssim \| Z_{-\infty,0}^{N}\|^k_{-\epsilon},
 \]
\[
       (t-s)^{\delta-\frac{\alpha}{2n}}s^{\frac\alpha{2n}-\delta+{3k\epsilon}}   \|(S_t-S_s)Z_{-\infty,0}^{N}\|_{2\epsilon} \|S_sZ_{-\infty,0}^{N}\|_{2\epsilon}^{k-1}
      \lesssim   \| Z_{-\infty,0}^{N}\|_{-\epsilon}^k,
\]
\[
       (t-s)^{\delta-\frac{\alpha}{2n}} s^{\frac\alpha{2n}-\delta+3(n-1)\epsilon} \|(S_t-S_s)Z_{-\infty,0}^{N}\|_{-\epsilon}
       \|S_sZ_{-\infty,0}^{N}\|_{2\epsilon}^{n-1}
      \lesssim    \| Z_{-\infty,0}^{N}\|_{-\epsilon}^n.
\]
Therefore, \eqref{Test-ZNT-S-HOLD} follows from  \eqref{TZN^nt-lp}, \eqref{TZNM^nt-sp} and Cauchy-Schwarz's inequality.

    When  $n=1$, we   estimate  that
\begin{equation*}
\|Z_{t}^{N}- Z_{s}^{N}\|_{-\alpha}
 \lesssim
     \|Z_{-\infty,t}^{N}-Z_{-\infty,s}^{N}\|_{-\alpha}
        +   \|(S_t-S_s)Z_{-\infty,0}^{N}\|_{-\alpha}  ,
\end{equation*}
where, by using similar procedures
\[
       (t-s)^{\delta-\frac{\alpha}{2}} \|(S_t-S_s)Z_{-\infty,0}^{N}\|_{-\alpha}
      \lesssim    \| Z_{-\infty,0}^{N}\|_{-2\delta}.
\]
Hence \eqref{Test-ZNT-S-HOLD} holds with $n=1$ by \eqref{TZN^nt-lp}.
\end{proof}

Now  we present the time regularity properties  of $(\bar Z^{N})^{:n:}$ for  $n=1,2,3$.
\begin{theorem}
\label{Z-INI} Let  $X_0\in\C^{-\alpha}$ with $\alpha\in(0,1)$, and the processes $(\bar Z^N)^{:n:}$,$n=1,2,3$ be defined by \eqref{TZN1230t-wick-ini}.  Then for any $p\ge 2$, $\kappa>0$ and $\delta\in(0,\frac \alpha{2n})$ 
\begin{equation}\label{TZTN-ZN-n-iniZMNt-s}
  ~ \sup_{N\in\mathbb N} \E \sup_{0\leq s< t\leq T} \frac{ s^{\mu_n p} }   { (t-s)^{(\frac{\alpha}{2n}-\delta)p}}
    \|(\bar Z^{N}_{t})^{:n:}-(\bar Z^{N}_{s})^{:n:}\|_{ {-\alpha}}^p <\infty.
\end{equation}
Here $\mu_1=\frac\alpha{2}-\delta+\kappa$, and $\mu_n=\frac{n^2+1}{2n}\alpha-\delta+\kappa$ for $n=2,3$.
\end{theorem}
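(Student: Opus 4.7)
The plan is to expand the increment $(\bar Z^N_t)^{:n:} - (\bar Z^N_s)^{:n:}$ via \eqref{TZN1230t-wick-ini} and reduce it to Besov estimates for $P_N S_\tau X_0$ and the Wick powers $(Z^N_\tau)^{:m:}$. Writing
\begin{align*}
(\bar Z^N_t)^{:n:} - (\bar Z^N_s)^{:n:} = \sum_{k=0}^n \binom{n}{k}\Big\{&\underbrace{\big[(P_N S_t X_0)^k - (P_N S_s X_0)^k\big] (Z^N_t)^{:n-k:}}_{\text{(I)}_k} \\
&+ \underbrace{(P_N S_s X_0)^k \big[(Z^N_t)^{:n-k:} - (Z^N_s)^{:n-k:}\big]}_{\text{(II)}_k}\Big\},
\end{align*}
I further factorize $a^k - b^k = (a - b) \sum_{j=0}^{k-1} a^j b^{k-1-j}$ with $a = P_N S_t X_0$, $b = P_N S_s X_0$ in order to isolate the temporal increment $P_N(S_t - S_s) X_0$.

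Each resulting product is bounded by combining the multiplicative inequalities of Lemma~\ref{Multi-ineq}, the Schauder estimates of Lemma~\ref{Heat-Smooth1}, the Besov embedding in Lemma~\ref{V-SUB-H}, and the moment bounds from Lemmas~\ref{Tmodefi-Z} and \ref{TZtN-ZsN0}. For the purely deterministic term $\text{(I)}_n$ no $(Z^N)$-factor is present, so Lemma~\ref{Multi-ineq}(i) applies in $\mathcal{C}^\epsilon$ for any small $\epsilon > 0$, followed by embedding into $\mathcal{C}^{-\alpha}$; Schauder then gives $\|P_N S_\tau X_0\|_\epsilon \lesssim \tau^{-(\epsilon + \alpha)/2}$ and, decomposing $S_t - S_s = (I - S_{t-s}) S_s$,
\[
\|P_N(S_t - S_s) X_0\|_\epsilon \lesssim (t-s)^{\alpha/(2n) - \delta}\, s^{-(\epsilon + \alpha/n - 2\delta + \alpha)/2}.
\]
For the mixed terms $\text{(I)}_k$ with $k < n$ and $\text{(II)}_k$ with $k \geq 1$, the key trick is to first bound in a less singular Besov space $\mathcal{C}^{-\epsilon_1}$ with $\epsilon_1 > 0$ small and then embed into $\mathcal{C}^{-\alpha}$; this allows Lemma~\ref{Multi-ineq}(ii) to be applied with positive regularity only slightly above $\epsilon_1$, keeping the Schauder singularity at $s = 0$ small. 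For $\text{(II)}_k$ one additionally invokes \eqref{Test-ZNT-S-HOLD} with $\alpha$ replaced by $\alpha(n-k)/n$ so that the resulting $(t-s)$-exponent equals $\alpha/(2n) - \delta$ as required.

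Tracking the $s$-exponents term by term then shows that the dominant singularity at $s = 0$ is driven by $\text{(I)}_n$: the $n - 1$ factors of $P_N S_\tau X_0$ in $\mathcal{C}^\epsilon$ contribute $s^{-(n-1)(\epsilon + \alpha)/2}$ and the increment factor contributes $s^{-(\epsilon + \alpha/n - 2\delta + \alpha)/2}$, summing to $s$-exponent $\delta - \frac{n^2+1}{2n}\alpha - \frac{n\epsilon}{2}$, which matches $-\mu_n$ after absorbing $n\epsilon/2$ into $\kappa$. For $n = 1$, a direct Schauder estimate with $\beta = -2\delta$ yields $\mu_1 = \alpha/2 - \delta + \kappa$ from $\text{(I)}_1$, while $\text{(II)}_0 = Z^N_t - Z^N_s$ contributes no $s$-singularity by \eqref{Test-ZNT-S-HOLD}.

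The main obstacle is the careful bookkeeping across all $(k, j)$ combinations to confirm that $\text{(I)}_n$ is indeed the dominant contribution; the ``small regularity'' trick is essential, since a naive application of Lemma~\ref{Multi-ineq}(ii) with positive regularity $\alpha + \epsilon$ for $k < n$ would produce apparently worse $s$-exponents that do not match the stated $\mu_n$. Once these pointwise bounds are assembled, Minkowski's inequality in $L^p(\Omega)$ together with the uniform-in-$N$ moment estimates for $(Z^N)^{:m:}$ and its increments delivers the desired bound.
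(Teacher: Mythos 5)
Your proposal is correct and follows essentially the same route as the paper: the same telescoping decomposition isolating $P_N(S_t-S_s)X_0$ via $a^k-b^k=(a-b)\sum a^jb^{k-1-j}$, the same combination of Lemmas \ref{Multi-ineq} and \ref{Heat-Smooth1} with the small-regularity/embedding trick, the same use of \eqref{Test-ZN-HOLD} and \eqref{Test-ZNT-S-HOLD} (with rescaled regularity for the $Z$-increments), and the same identification of the purely deterministic term as the source of the dominant singularity $s^{-\mu_n}$. The exponent bookkeeping you give matches the paper's (the paper's corresponding bound is $(t-s)^{\delta-\frac{\alpha}{2n}}s^{\frac{n}{2}\alpha+\frac{\alpha}{2n}-\delta+2(n-1)\epsilon}\|(S_t-S_s)X_0\|_{0}\|S_sX_0\|_{2\epsilon}^{n-1}\lesssim\|X_0\|_{-\alpha}^n$, with the $\epsilon$-losses absorbed into $\kappa$ exactly as you describe).
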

\begin{proof}  Let $\delta\in(0,\frac \alpha{2n})$ and $\epsilon>0$ small enough.  For $n=2,3$, utilizing  Lemma \ref{Multi-ineq} and   \eqref{est-PN}  in \eqref{TZN1230t-wick-ini} we obtain
\begin{equation*} \begin{aligned}
\|(\bar Z_{t}^{N})^{:n:}&- (\bar Z_{s}^{N})^{:n:}\|_{-\alpha}
 \lesssim    \sum_{k=1}^{n-1}  \Big\{ \|S_tX_0\|_{\frac{n-k}{n}\alpha+\epsilon}^{k} \|(Z_{t}^{N})^{:n-k:}-(Z_{s}^{N})^{:n-k:}\|_{\frac{k-n}{n}\alpha}
      \\ &                  +  \|(S_t-S_s)X_0\|_{2\epsilon}  \|S_sX_0\|_{2\epsilon}^{k-1}  \|(Z_{s}^{N})^{:n-k:}\|_{-\epsilon} \Big\}\\
      &+ \|(Z_{t}^{N})^{:n:}-(Z_{s}^{N})^{:n:}\|_{-\alpha}   +  \|(S_t-S_s)X_0\|_{0}  \|P_NS_sX_0\|_{2\epsilon}^{n-1},
\end{aligned}
\end{equation*}
 on the right-hand side of which, by Lemma \ref{Heat-Smooth1} for every $k=1,...,n-1$
 \[   t^{\frac k2\alpha+\frac{k(n-k)}{2n}\alpha+k\epsilon} \|S_tX_0\|^k_{\frac{n-k}{n}\alpha+\epsilon}
 \lesssim \| X_0\|^k_{-\alpha},
 \]
\[
  (t-s)^{\delta-\frac{\alpha}{2n}} s^{\frac k2\alpha+\frac\alpha {2n}-\delta+2k\epsilon}  \|(S_t-S_s)X_0\|_{2\epsilon}\|S_sX_0\|_{2\epsilon}^{k-1} \lesssim   \| X_0\|^k_{-\alpha},
\]
\[
     (t-s)^{\delta-\frac{\alpha}{2n}}s^{\frac n2\alpha +\frac\alpha{2n}-\delta+2(n-1)\epsilon}    \|(S_t-S_s)X_0\|_{0}\|S_sX_0\|_{2\epsilon}^{n-1}
     \lesssim \| X_0\|_{-\alpha}^n.
\]
Therefore, \eqref{TZTN-ZN-n-iniZMNt-s} holds for $n=2,3$ by \eqref{Test-ZNT-S-HOLD}, \eqref{Test-ZN-HOLD} and Cauchy-Schwarz's inequality.  Finally, for $n=1$  we similarly estimate that
\begin{equation*}
\|\bar Z_{t}^{N}- \bar Z_{s}^{N} \|_{-\alpha}
 \lesssim      \|Z_{t}^{N}-Z_{s}^{N}\|_{-\alpha}   +  \|P_N(S_t-S_s)X_0\|_{-\alpha},
\end{equation*}
where for  any $\epsilon>0$
\[
     (t-s)^{\delta-\frac{\alpha}{2}}s^{\frac\alpha{2}-\delta+\epsilon}    \|P_N(S_t-S_s)X_0\|_{-\alpha}
     \lesssim \| X_0\|_{-\alpha}.
\]
 \eqref{TZTN-ZN-n-iniZMNt-s} holds for $n=1$ immediately.
\end{proof}

\section{Proof of main result }\label{Pathwise-error}
In this section, we will develop a space-time fully discrete scheme $X^{N,M}$ in \eqref{nNMX} for the solution $X$ to \eqref{initial-Eq}. Here, $N$ is linked to the spectral Galerkin approximation previously introduced, and $M$ is linked to the temporal discretization. To achieve this, we consider the processes ${\bar Z}^{:n:},({\bar Z}^N)^{:n:}$, $n=1,2,3$, $N\in\mathbb N$ introduced in Section \ref{SEC3}. Then 
we interpret (\ref{ini2-Eq}) in the mild sense, i.e.  $Y$ solves (\ref{ini2-Eq}) if for every $t\geq 0$
\begin{equation} \label{Tmild-2}
 Y_t=\int^t_0 S_{t-s}\Psi(Y_s,\underline{\bar Z}_s)ds,
\end{equation}
where
$\Psi$ is given by \eqref{wick-YZ}. Existence and uniqueness of the  mild solution \eqref{Tmild-2} to equation \eqref{ini2-Eq}   has been widely discussed (e.g. \cite[Theorem 3.10]{RZZ2017}, \cite[Theorem 6.2]{MW2010}). It is more convenience to construct a space-time approximation $Y^{N,M}$ via \eqref{Tmild-2} by considering the spectral Galerkin approximation $({\bar Z}^N)^{:n:}$ (see \eqref{Tmild-2-NM-fudis} below for details).
Then by the uniform a-priori bounds of   $Y^{N,M}$ in \eqref{Tmild-2-NM}, we obtain the error estimate between the nonlinear term $Y$ in \eqref{Tmild-2} and its  space-time approximation $Y^{N,M}$ in \eqref{Tmild-2-NM}. Finally, we present the main result, i.e. the   convergence rates in time and space for full-discrete approximations of   \eqref{initial-Eq}.

We also mention that Tsatsoulis and  Weber in \cite{TW2018,TW20182} split $X$ into
$X=\bar Y+ Z$ instead of \eqref{sum}, where $Z$  satisfies \eqref{HEAT-s} and $\bar Y$ solves   the following equation  instead of \eqref{ini2-Eq}
 \begin{equation}
\label{ini2-Eqbar}
\left\{
 \begin{aligned}
\t \bar Y&=A \bar Y+\Psi(\bar Y,\underline{ Z}) \text{~~in~} (0,\infty)\times\T,\\
\bar Y(0)&=X_0\text{~~on~} \T,
 \end{aligned}
 \right.
\end{equation}
 in the mild sense, i.e.  for every $t\ge 0$
\begin{equation*} \label{Tmild-2bar}
 \bar Y_t=S_tX_0+\int^t_0 S_{t-s}\Psi(\bar Y_s,\underline{ Z}_s)ds.
\end{equation*}
They obtained local existence and uniqueness of the above mild solution  $\bar Y$ in a Besov space $\mathcal C^\beta$ ($\alpha<\beta<2-\alpha$) with the norm $\sup_{t\in[0,T]}t^{\gamma}\|\cdot\|_{\beta}$. The coefficient $\gamma\in(\frac{\alpha+\beta}{2},\frac 13-\frac\beta 6)$ is used to measure the blow-up of $\|\bar Y_t\|_{\beta}$ for $t$ close to 0 (see \cite[Theorem 3.9]{TW20182}). Actually,  \eqref{ini2-Eq} is equivalent to \eqref{ini2-Eqbar}. More precisely,  $\bar Y$ is a solution to  \eqref{ini2-Eqbar}  if and only if $Y:=\bar Y-S_{\cdot}X_0$ is a solution to  \eqref{ini2-Eq} (refer to \cite[Theorems 3.9,4.8]{RZZ2017}).
Based on the above discussion, using a fixed  point argument we present  the regularity property of $Y$ in the Besov space $\C^\beta$ with the norm $\|\cdot\|_\beta$, instead of  the norm $t^\gamma\|\cdot\|_\beta$ on the same space (see \cite[Theorem 3.9]{TW20182}).

Assume that   the positive coefficients $\alpha<\beta<1$  satisfy
	\begin{equation}\begin{aligned}
			\label{coef}
				\frac{ 5\alpha+\beta}{2} <1 .
	\end{aligned}\end{equation}

\begin{theorem}    \label{T-Y-new-Th}
Let $p\ge2$ and  $\alpha, \beta$ satisfy \eqref{coef}.
Then
 \begin{equation}\label{est-Y-EHOLD}
\E \sup_{t\in[0,T]}\|Y_{t} \|^p_{\beta}<\infty.
 \end{equation}
\end{theorem}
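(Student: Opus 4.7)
I would set up a Picard/contraction fixed-point scheme for the mild equation \eqref{Tmild-2} directly in $C([0,T];\mathcal C^\beta)$ (no time weight), exploiting the initial condition $Y_0=0$. The three crucial ingredients are the positive-regularity multiplicative inequality \eqref{Multi-ineq-2}, the Schauder smoothing \eqref{Heat-Smooth}, and the moment/time-singularity bound \eqref{Test-ZN-HOLD-INI2} for the Wick powers $\bar Z^{:n:}$. First, since $\beta>\alpha>0$, Lemma \ref{Multi-ineq} gives $\|y^k\bar Z^{:j-k:}\|_{-\alpha}\lesssim \|y\|_\beta^k\,\|\bar Z^{:j-k:}\|_{-\alpha}$ for each monomial in $\Psi$, and hence
\begin{equation*}
\|\Psi(y,\underline{\bar Z}_s)\|_{-\alpha}\lesssim \bigl(1+\|y\|_\beta^3\bigr)\Bigl(1+\sum_{n=1}^{3}\|\bar Z^{:n:}_s\|_{-\alpha}\Bigr).
\end{equation*}

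Combining this with the Schauder estimate gives
\begin{equation*}
\|Y_t\|_\beta\lesssim \int_0^t (t-s)^{-(\alpha+\beta)/2}\,\|\Psi(Y_s,\underline{\bar Z}_s)\|_{-\alpha}\,ds,
\end{equation*}
and \eqref{Test-ZN-HOLD-INI2} bounds the worst factor by $\|\bar Z^{:3:}_s\|_{-\alpha}\lesssim s^{-2\alpha-\kappa}\,\mathfrak Z_T(\omega)$ for some random variable $\mathfrak Z_T$ with all $L^p$-moments finite. A beta-function computation then yields $\int_0^t (t-s)^{-(\alpha+\beta)/2} s^{-2\alpha-\kappa}\,ds\lesssim t^{1-(5\alpha+\beta)/2-\kappa}$, whose finiteness on $[0,T]$ is precisely the content of the hypothesis $(5\alpha+\beta)/2<1$ in \eqref{coef} (pick $\kappa>0$ small). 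On a ball $B_R\subset C([0,T_0];\mathcal C^\beta)$ this produces
\begin{equation*}
\sup_{t\le T_0}\|\mathcal T Y_t\|_\beta\le C\,\mathfrak Z_T\bigl(1+R^3\bigr)T_0^{1-(5\alpha+\beta)/2-\kappa},
\end{equation*}
together with a parallel Lipschitz bound on $\mathcal T Y-\mathcal T\tilde Y$; for $T_0=T_0(\mathfrak Z_T(\omega))$ small enough the map $\mathcal T$ is a contraction on $B_R$, giving a unique local solution with $Y_0=0$.

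To upgrade from local to the full $[0,T]$, I would invoke the global well-posedness of $\bar Y=Y+S_\cdot X_0$ from \cite{TW20182,RZZ2017} to extract a weighted bound $\sup_{s\le T}s^\gamma\|Y_s\|_\beta<\infty$ a.s., and re-insert this into the mild estimate above; the exponent constraint \eqref{coef} together with $Y_0=0$ absorbs the weight at $s=0$, upgrading the bound to the unweighted $\sup_{t\le T}\|Y_t\|_\beta<\infty$ a.s. Taking $L^p$-moments and using that $\mathfrak Z_T$ has all moments (through Lemma \ref{Tmodefi-Z} and Theorem \ref{Z-INI}) then yields \eqref{est-Y-EHOLD}. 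The main technical obstacle is the simultaneous bookkeeping of the Schauder singularity $(t-s)^{-(\alpha+\beta)/2}$, the Wick-cube singularity $s^{-2\alpha}$, and the cubic self-coupling $\|Y_s\|_\beta^3$, which is exactly what dictates the rather strong condition $(5\alpha+\beta)/2<1$; pushing the pathwise local contraction into a uniform-in-time $L^p$ bound (rather than a merely $\mathbb P$-a.s.\ one) is the other delicate point, and has to rely on the equivalence $\bar Y=Y+S_\cdot X_0$ together with the moment estimates obtained in Section \ref{SEC3}.
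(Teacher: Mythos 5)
Your local construction is essentially the paper's own route: the proof of Theorem \ref{T-Y-new-Th} in the paper is a one-line reduction to \cite[Theorem 3.9]{TW20182} "with $Z$ replaced by $\bar Z$, $\gamma=0$ and initial value $x=0$", and that argument is precisely the unweighted fixed point you set up, driven by \eqref{Multi-ineq-2}, \eqref{Heat-Smooth} and \eqref{Test-ZN-HOLD-INI2}, with the beta-function exponent $1-\tfrac{5\alpha+\beta}{2}-\kappa>0$ coming from \eqref{coef}. Where you diverge is the globalization: the paper (via Tsatsoulis--Weber) extends the local solution through an a priori estimate exploiting the damping $a_3<0$, whereas you import the known global well-posedness of $\bar Y=Y+S_\cdot X_0$ in the weighted norm $\sup_t t^\gamma\|\cdot\|_\beta$ and transfer it back. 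That transfer is workable, but not in the literal form you state: re-inserting $\|Y_s\|_\beta\lesssim s^{-\gamma}$ into the mild formula produces integrands like $(t-s)^{-(\alpha+\beta)/2}s^{-2\gamma}\|\bar Z_s\|_{-\alpha}$ and $(t-s)^{-(\alpha+\beta)/2}s^{-\gamma-\alpha-\kappa}$, and with $\gamma>\tfrac{\alpha+\beta}{2}$ the resulting exponent conditions (e.g.\ $\tfrac{3(\alpha+\beta)}{2}<1$, $2\alpha+\beta<1$) are \emph{not} implied by \eqref{coef}, so the integral need not converge. The correct way to realize your idea is the splitting you gesture at with ``$Y_0=0$ absorbs the weight'': on $[0,T_0(\omega)]$ use the unweighted bound $\|Y_t\|_\beta\le R$ from the contraction, and on $[T_0,T]$ simply write $\|Y_t\|_\beta\le T_0^{-\gamma}\sup_s s^\gamma\|Y_s\|_\beta$; since $T_0\gtrsim\big(\mathfrak Z_T(1+R^2)\big)^{-1/\nu}\wedge T$ and both $\mathfrak Z_T$ and the weighted supremum have all moments, H\"older's inequality then yields \eqref{est-Y-EHOLD}. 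With that repair your argument is correct; what the paper's (TW-style) route buys is that it never needs the weighted global theory at all, while yours avoids redoing the damping estimate.
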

\begin{proof} With the regularity property of $\bar Z$ described in \eqref{Test-ZN-HOLD-INI2}, the procedure is essentially the same as in the proof of \cite[Theorem 3.9]{TW20182}, if we  replace $Z$ by $\bar Z$ and set $\gamma=0$ and the initial value $x=0$. We omit the details.
\end{proof}
\subsection{Space-time full discretization}\label{SEC4-11}
In this subsection we propose a space-time
approximation $X^{N,M}$ of \eqref{initial-Eq} by  tamed exponential Euler discretization in time and spectral  Galerkin method in space.

Let $T>0$ and $M\in\mathbb N$,  we construct a uniform mesh on $[0,T]$ with $\tau=T/M$ being the time stepsize, and define $$t_k:=k\tau,~~k=0,1,...,M.$$
Inspired by \cite{W20}, we propose a space-time full discretization $Y^{N,M}$ of $Y$ as  $Y_{t}^{N,M}\equiv0,t\in[0,\tau]$, and for every $m=1,...,M-1$
\begin{equation}\label{Tmild-2-NM-fudis}
	 Y_{t_{m+1}}^{N,M}=S(\tau) Y_{t_{m}}^{N,M}
	 +\int^{t_{m+1}}_{t_{m}}\frac{P_N S_{t_{m+1}-s} \Psi(Y_{t_{m}}^{N,M},
	 	\underline{\bar Z}_{t_{m}}^{N} )}
	{1+\tau \|\Psi(Y_{t_{m}}^{N,M},\underline{\bar Z}_{t_{m}}^{N} )\| _{-\alpha}  } ds
\end{equation}
with  $\bar Z^{N}_0=P_NX_0$.  Define
\[
{\lfloor t \rfloor}_\tau:=t_m, \text{~~~~for $t$ in $[t_m,t_{m+1})$, $m\in\{0,1,...,M-1\}$}.
\]
Then we  introduce a continuous version of the fully discrete version  \eqref{Tmild-2-NM-fudis} as
\begin{equation}\label{Tmild-2-NM}\begin{aligned}
 Y_t^{N,M}=&\int^{t\vee \tau}_\tau\frac{P_N S_{t-s}\Psi(Y_{{\lfloor s\rfloor}_\tau}^{N,M},\underline{\bar Z}_{{\lfloor s\rfloor}_\tau}^{N} )}
 {1+\tau \|\Psi(Y_{{\lfloor s\rfloor}_\tau}^{N,M},\underline{\bar Z}_{{\lfloor s\rfloor}_\tau}^{N} )\| _{-\alpha}}  ds,~~t\in[0,T].
       \end{aligned}
\end{equation}
 Finally, the  space-time full discretizations of \eqref{sum}   are constructed as
\begin{equation}\begin{aligned}\label{nNMX}
 X_t^{N,M}=&\int^{t\vee \tau}_\tau\frac{P_N S_{t-s} \Psi(Y_{{\lfloor s\rfloor}_\tau}^{N,M},\underline{\bar Z}_{{\lfloor s\rfloor}_\tau}^{N} )}
 {1+\tau \|\Psi(Y_{{\lfloor s\rfloor}_\tau}^{N,M},\underline{\bar Z}_{{\lfloor s\rfloor}_\tau}^{N} )\| _{-\alpha}}  ds  +\bar Z^N_t,~t\in[0,T].
       \end{aligned}
\end{equation}
Indeed, on the right-hand side of  \eqref{Tmild-2-NM} and \eqref{nNMX} we integrate from  $\tau$ instead of 0 as given in \cite{W20}. It is because that ${\lfloor s\rfloor}_\tau=0$ for any $s\in[0,\tau)$, and  the term $S_{{\lfloor s\rfloor}_\tau}X_0$ still remains in a Besov space of negative space, which leads that   the terms $(\bar{Z}^N_{{\lfloor s\rfloor}_\tau})^{:n:}$,$n=2,3$ defined in \eqref{TZN1230t-wick-ini} are not well-defined  appearing in the function $\Psi$.
\subsection{A priori bounds for  the approximations}\label{SEC4-1}
The aim of this subsection is to prove a priori  bounds  for the full-discrete approximations $Y^{N,M}$, $N,M\in\mathbb N$ defined in \eqref{Tmild-2-NM}.

\begin{theorem}    \label{Test-Y-HOLD-Th}
Let $p\ge2$ and $\alpha, \beta $ satisfy \eqref{coef}.
Then   $Y^{N,M}\in C([0,T];\C^\infty)$ and
 \begin{equation}\label{Test-Y-EHOLD}
\sup_{N,M\in\mathbb N}\E \sup_{t\in[0,T]}  \|Y_{t}^{N,M}\|^p_{\beta}<\infty.
 \end{equation}
\end{theorem}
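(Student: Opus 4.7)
My plan is to adapt the proof of Theorem \ref{T-Y-new-Th} (which itself is modeled on \cite[Theorem 3.9]{TW20182} with weight $\gamma=0$ and vanishing initial data) to the fully discrete setting, exploiting that the taming factor $\frac{x}{1+\tau x}\le x$ only reduces the nonlinearity compared with the continuous mild map. The regularity claim $Y^{N,M}\in C([0,T];\mathcal{C}^\infty)$ is immediate: $P_N$ projects onto the finite-dimensional space $\mathrm{span}\{e_m:|m|\le N\}\subset C^\infty(\mathbb{T}^2)$, so $Y^{N,M}_t\in\mathcal{C}^\infty$ for every $t$, while continuity in $t$ follows from strong continuity of $S_t$ combined with the boundedness of the tamed integrand in $\mathcal{C}^{-\alpha}$ on each $[t_m,t_{m+1})$.

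For the moment bound, I would apply Lemma \ref{Heat-Smooth1}(i) and Lemma \ref{est-P-PN-P} to \eqref{Tmild-2-NM} to get
\[
\|Y_t^{N,M}\|_\beta \lesssim \int_\tau^{t\vee\tau}(t-s)^{-(\alpha+\beta)/2}\,\frac{\|\Psi(Y^{N,M}_{\lfloor s\rfloor_\tau},\underline{\bar Z}^N_{\lfloor s\rfloor_\tau})\|_{-\alpha}}{1+\tau\|\Psi(Y^{N,M}_{\lfloor s\rfloor_\tau},\underline{\bar Z}^N_{\lfloor s\rfloor_\tau})\|_{-\alpha}}\,ds,
\]
and then use the multiplicative inequalities of Lemma \ref{Multi-ineq} together with the Besov embedding $\mathcal{C}^\beta\hookrightarrow\mathcal{C}^{-\alpha}$ (valid under the standing assumption $\alpha<\beta$) to bound
\[
\|\Psi(y,\underline z)\|_{-\alpha}\lesssim (1+\|y\|_\beta)^3\Bigl(1+\sum_{n=1}^3\|z^{:n:}\|_{-\alpha}\Bigr).
\]
Dominating the tamed integrand pointwise by $(1+\|Y^{N,M}_{\lfloor s\rfloor_\tau}\|_\beta)^3 W^N_{\lfloor s\rfloor_\tau}$ with $W^N_s:=1+\sum_{n=1}^3\|(\bar Z^N_s)^{:n:}\|_{-\alpha}$, Theorem \ref{Z-INI} gives $\|(\bar Z^N_s)^{:n:}\|_{-\alpha}\lesssim s^{-(n-1)\alpha-\kappa}$ with $p$-th moments uniform in $N$; the worst singularity $s^{-2\alpha-\kappa}$ is integrable against $(t-s)^{-(\alpha+\beta)/2}$ on $[0,T]$ precisely because of \eqref{coef}, producing a random but $N,M$-independent constant $K_\omega$ with $\mathbb{E}K_\omega^p<\infty$.

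The resulting inequality $\sup_{s\le t}\|Y^{N,M}_s\|_\beta\le C K_\omega(1+\sup_{s\le t}\|Y^{N,M}_s\|_\beta)^3$ has cubic growth in $\|Y\|_\beta$, which prevents a direct Gronwall pass. I would close it exactly as in \cite[Theorem 3.9]{TW20182}: first run a short-time contraction in a ball of $C([T_{k-1},T_k];\mathcal{C}^\beta)$, where $T_k-T_{k-1}$ is chosen small enough (depending only on $\omega$) so that the local version of $K_\omega$ is smaller than $1/(C(1+R)^3)$ for a fixed radius $R$; then concatenate across a random, $M$-independent number of such subintervals to cover $[0,T]$. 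Because the taming can only shrink the integrand, all the constants obtained for the continuous flow transfer to the discrete scheme without $\tau$-dependent losses, and the uniformity in $N$ comes from Lemma \ref{Tmodefi-Z} and Theorem \ref{Z-INI}. Taking $p$-th moments at the end gives \eqref{Test-Y-EHOLD}.

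The main obstacle is the cubic dependence in $\|Y\|_\beta$: a linear Gronwall does not yield a global-in-time bound, and the iterative contraction must be set up so that the number of subintervals, the radius $R$, and all the constants depend only on $\omega$-random quantities controlled by Theorem \ref{Z-INI}, and in particular not on the step size $\tau=T/M$. The key reason this works is that the taming factor $\frac{x}{1+\tau x}$ never exceeds $x$, so the discrete iterate $Y^{N,M}$ inherits (with the same constants) all the growth bounds one can derive for the continuous analogue where the taming is removed.
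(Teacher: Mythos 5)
Your proposal has a genuine gap at its core. The claim that ``the taming can only shrink the integrand, so all the constants obtained for the continuous flow transfer to the discrete scheme'' is not a valid argument: $Y^{N,M}$ is an \emph{explicitly} defined iterate (the right-hand side of \eqref{Tmild-2-NM} evaluates $\Psi$ at the frozen value $Y^{N,M}_{\lfloor s\rfloor_\tau}$), so there is no fixed-point map on $C([T_{k-1},T_k];\C^\beta)$ to contract, and pointwise domination of the integrand's $\C^{-\alpha}$-norm does not yield domination of the solution's $\C^\beta$-norm for a sign-indefinite cubic nonlinearity. The self-bound $x\le CK_\omega(1+x)^3$ you arrive at is satisfied by all large $x$ and, as you note, cannot be closed by Gr\"onwall; the short-time contraction you invoke from \cite[Theorem 3.9]{TW20182} only constructs and bounds the \emph{continuous} solution. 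To transfer such bounds to $Y^{N,M}$ one must control $Y^{N,M}_s-Y^{N,M}_{\lfloor s\rfloor_\tau}$, which itself requires an a priori bound on $Y^{N,M}$ --- precisely the quantity being estimated. Without breaking this circularity the argument does not close; indeed, for untamed explicit schemes with superlinear nonlinearities the analogous moment bounds are known to fail, so no proof that treats the taming merely as ``an upper bound by the untamed integrand'' can succeed.

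The paper breaks the circularity by localization and by using the taming quantitatively. On the event $\Omega^K_{N,m}$ of \eqref{set-bdd} with $K=\tau^{-\mu}$, the difference $V^{N,M}$ in \eqref{TmildVY-NM} between the tamed frozen scheme and its continuous-in-time counterpart is shown to be $O_R(\tau^\nu)$ (Step 1 of Lemma \ref{Test-Yset-Ti-HOLD-Th}, after stopping at $\sigma^N_R$), while the remainder $\hat Y^{N,M}$ of \eqref{Tmild-Yhat-NM} is bounded by the continuous PDE theory; this yields \eqref{Test-Yset-ti-EHOLD}. On the complement $\bar\Omega^K_{N,m}$ the taming is used in an essential way to obtain the deterministic worst-case bound $\|Y^{N,M}_t\|_\beta\lesssim\tau^{-1}$ of \eqref{tauYN-1}, and Chebyshev's inequality with arbitrarily high moments, via the decomposition \eqref{set-depOmeg}, shows this event contributes a uniformly bounded amount. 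Neither the localization at level $K=\tau^{-\mu}$, the splitting $Y^{N,M}=V^{N,M}+\hat Y^{N,M}$, nor the crude $\tau^{-1}$ bound appears in your proposal, and these are the ingredients that make the theorem true.
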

Let $K>0$, for $m=0,1,...,M$,  set
\begin{equation}\label{set-bdd} \Omega^{K}_{N,m}:=\Big\{\sup_{j\in\{0,1,...,m\}}  \|Y_{t_j}^{N,M}\|_{\beta} \le K \Big\}.
\end{equation}
 To prove Theorem \ref{Test-Y-HOLD-Th}, we initially establish uniform  a prior bounds of $Y^{N,M}$ on the subset ${\Omega_{N,m}^K}$,  and then we extend the result to the entire set. In the following we denote by $\bar \Omega$ and ${1}_\Omega$ the complement and indicator function of a set $\Omega$. It is known that $1_{\Omega_{N,m}^K}$ is ${\mathcal F}_{t_m}$ adapted.
\begin{lemma}    \label{Test-Yset-Ti-HOLD-Th}
	Let $p\ge2$, $\alpha,\beta$  satisfy \eqref{coef}, $\delta_n\in(0,\frac \alpha{2n})$, $n=1,2,3$, and $K:=\tau^{-\mu}$ for any $\mu\in(0,\min\{ \frac16,\frac\alpha 4-\frac{\delta_1}{2},\frac\alpha4-\delta_2,\frac15- \frac{5\alpha+\beta}{10}\})$.
	Then   $Y^{N,M}\in C([0,T];\C^\infty)$ and
	\begin{equation}\label{Test-Yset-ti-EHOLD}
		\sup_{N,M\in\mathbb N}\E \sup_{m\in\{0,...,M\}}\sup_{t\in[0,t_m]}1 _{\Omega^K_{{N,m-1}}} \|Y_{t}^{N,M}\|^p_{\beta}<\infty
	\end{equation}
where we set $1_{\Omega^K_{{N,{-1}}}}=1$.
\end{lemma}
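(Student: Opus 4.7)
The plan is as follows. Since $P_N$ projects $L^2(\T)$ onto the finite-dimensional span of $\{e_m:|m|\le N\}\subset\C^\infty$, both $\bar Z^N_t$ and the tamed integrand in \eqref{Tmild-2-NM} take values in $\C^\infty$, so continuity of $t\mapsto Y^{N,M}_t\in\C^\infty$ follows directly from the explicit mild formula together with dominated convergence. I would then work pathwise on the good event $\Omega^K_{N,m-1}$, on which by definition $\|Y^{N,M}_{t_j}\|_\beta\le K=\tau^{-\mu}$ for every $j\le m-1$, with the aim of deriving a uniform bound on $\sup_{t\in[0,t_m]}\|Y^{N,M}_t\|_\beta$ in $L^p(\Omega)$ independent of $N,M$.

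The starting point is the subinterval representation: for $t\in[t_k,t_{k+1}]$ with $k\le m-1$, the semigroup identity applied to \eqref{Tmild-2-NM} gives
\[
Y^{N,M}_t=S_{t-t_k}Y^{N,M}_{t_k}+\int_{t_k}^{t}\frac{P_N S_{t-s}\Psi(Y^{N,M}_{t_k},\underline{\bar Z}^N_{t_k})}{1+\tau\|\Psi(Y^{N,M}_{t_k},\underline{\bar Z}^N_{t_k})\|_{-\alpha}}ds.
\]
The multiplicative inequalities of Lemma \ref{Multi-ineq} combined with the Besov embedding $\C^\beta\hookrightarrow\C^{-\alpha}$ (Lemma \ref{Bes-Smooth}) yield $\|\Psi(Y^{N,M}_{t_k},\underline{\bar Z}^N_{t_k})\|_{-\alpha}\lesssim 1+K^3+\sum_{n=1}^{3}K^{3-n}\|(\bar Z^N_{t_k})^{:n:}\|_{-\alpha}$, and \eqref{Test-ZN-HOLD-INI2} controls each Wick power by $t_k^{-(n-1)\alpha-\kappa}R$ with an $L^p$-bounded $R$ uniform in $N$. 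Applying the Schauder estimate $\|P_N S_{t-s}f\|_\beta\lesssim(t-s)^{-(\alpha+\beta)/2}\|f\|_{-\alpha}$ from Lemma \ref{Heat-Smooth1}, interpolating the taming factor as $x/(1+\tau x)\le x^{1-\theta}\tau^{-\theta}$ for a suitable $\theta\in[0,1]$, and using the beta-type bound $\int_\tau^{t}(t-s)^{-(\alpha+\beta)/2}s^{-c}ds\lesssim T^{1-(\alpha+\beta)/2-c}$ (finite whenever $c+(\alpha+\beta)/2<1$, which by \eqref{coef} covers every exponent arising here, in particular $c=2\alpha+\kappa$), reduces the integral contribution to a finite sum of products of powers of $\tau$, $K$ and moments of $R$.

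The main obstacle is to choose the interpolation exponent $\theta$ and to invoke the time regularity of $(\bar Z^N)^{:n:}$ from Theorem \ref{Z-INI}, namely the gain $\tau^{\alpha/(2n)-\delta_n}s^{-\mu_n}$ obtained when $\underline{\bar Z}^N_{\lfloor s\rfloor_\tau}$ is replaced by $\underline{\bar Z}^N_s$, in such a way that every exponent of $\tau$ in the final estimate is non-negative uniformly in $N,M$. The four upper bounds on $\mu$ in the statement are precisely tuned to achieve this balance: $\mu<\alpha/4-\delta_1/2$ absorbs the contribution from the $K^2\|\bar Z^N\|$ term (paired with the $\tau^{\alpha/2-\delta_1}$ time-regularity gain), $\mu<\alpha/4-\delta_2$ absorbs the $K\|(\bar Z^N)^{:2:}\|$ term, $\mu<1/5-(5\alpha+\beta)/10$ controls the cumulated Schauder-integrated budget in the cubic case, and $\mu<1/6$ handles the pure $K^3$ self-interaction after the taming interpolation. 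Taking $L^p(\Omega)$-expectations and invoking the uniform-in-$N$ bounds on $R$ from Lemma \ref{Tmodefi-Z} and Theorem \ref{Z-INI}, together with Cauchy--Schwarz to decouple the random factors, then delivers the claimed uniform estimate \eqref{Test-Yset-ti-EHOLD}.
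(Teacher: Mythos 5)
Your proposal contains a genuine gap at its core. You aim to bound $\sup_{t\in[0,t_m]}\|Y^{N,M}_t\|_\beta$ \emph{uniformly in $N,M$} directly from the discrete recursion, using only the information $\|Y^{N,M}_{t_j}\|_\beta\le K=\tau^{-\mu}$ available on $\Omega^K_{N,m-1}$. But every route you describe produces a bound that still scales with $K$: the term $S_{t-t_k}Y^{N,M}_{t_k}$ in your subinterval representation is only controlled by $K=\tau^{-\mu}\to\infty$, and the integral contribution, after inserting $\|\Psi(Y^{N,M}_{t_k},\underline{\bar Z}^N_{t_k})\|\lesssim 1+K^3+\dots$ and the Schauder/beta estimates, is of order $K^3$ times powers of $\tau$ that your constraints on $\mu$ do \emph{not} make $O(1)$ (nor could they: a cubic nonlinearity cannot be closed by Gronwall over $M$ steps). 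Nowhere do you use the sign condition $a_3<0$, yet without the dissipativity of the cubic term no uniform-in-$N,M$ bound on $[0,T]$ is possible even for the continuous-time equation. The constraints on $\mu$ in the statement are tuned to make a \emph{discretization error} small, not to make the solution itself $O(1)$.

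The paper's proof supplies exactly the missing mechanism: it writes $Y^{N,M}=V^{N,M}+\hat Y^{N,M}$, where $V^{N,M}$ (see \eqref{TmildVY-NM}) collects the difference between the tamed, piecewise-frozen integrand and the untamed continuous-time integrand evaluated at $Y^{N,M}_s$ itself, and $\hat Y^{N,M}$ solves the genuine continuous-time mild equation \eqref{Tmild-Yhat-NM} perturbed by $V^{N,M}$. Step 1 shows $1_{\Omega^K_{N,m-1}}\|V^{N,M}_t\|_\beta\lesssim_R\tau^\nu$ on $[0,t_m\wedge\sigma^N_R]$ — this is where the splitting into $L_1$ (freezing error, using Theorem \ref{Z-INI} and \eqref{Tmild-2-yr-r}) and $L_2$ (taming error) lives, and where all four upper bounds on $\mu$ are consumed (e.g.\ $1-6\mu>0$ from $L_2$, $\frac\alpha2-\delta_1-2\mu>0$ and $\frac\alpha4-\delta_2-\mu>0$ and $\frac\lambda2-5\mu>0$ from $L_1$). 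Step 2 then gets the $O(1)$ bound for $\hat Y^{N,M}$ from the a priori estimate for the dissipative cubic equation via \cite[Theorem 5.1]{LR13}. Your ingredient list (Schauder, multiplicative inequalities, Wick-power regularity, stopping times) is the right toolbox for Step 1, but without the $V+\hat Y$ decomposition and the coercivity input the argument cannot close.
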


\begin{proof}
 We   introduce a process given by
\begin{equation}\label{TmildVY-NM}\begin{aligned}
 V_t^{N,M}=&\int^{t\vee\tau}_\tau\Big\{\frac{P_N S_{t-s}\Psi(Y_{\lfloor s\rfloor_\tau}^{N,M},\underline{\bar Z}_{\lfloor s\rfloor_\tau}^{N} )}
 {1+\tau \|\Psi(Y_{\lfloor s\rfloor_\tau}^{N,M},\underline{\bar Z}_{\lfloor s\rfloor_\tau}^{N} )\|_{-\alpha}  }
- {P_N S_{t-s}\Psi(Y_{ s }^{N,M},\underline{\bar Z}_{ s }^{N} )}\Big\}
   ds
\end{aligned}
\end{equation}
for $t\in[0,T]$.
 Then  we have decomposition
\begin{equation}\label{finalyyv}
  {Y}_t^{N,M}={V}_t^{N,M}+\hat{Y}_t^{N,M},
\end{equation}
where the process $\hat{Y}^{N,M}$ satisfies
\begin{equation}\label{Tmild-Yhat-NM}\begin{aligned}
 \hat{Y}_t^{N,M}= \int^{t\vee\tau}_\tau{P_N S_{t-s}
     \Psi(\hat{Y}_s^{N,M}+{V}_s^{N,M},\underline{\bar Z}_{ s }^{N} )}ds,~t\in[0,T].
\end{aligned}
\end{equation}
According to the identity \eqref{finalyyv}, we bound  $V_t^{N,M}$ and ${\hat Y}_t^{N,M}$ separately. 

To begin with,  let $R>0$, $N\in\mathbb N$ and $\delta_n\in(0,\frac \alpha{2n})$, $n=1,2,3$,  we set the stopping time as
 \begin{equation}\label{stp01}\begin{aligned}
	\sigma^N_R:=&\inf \Big\{t\leq T: \max\Big(
t^{(m-1)\alpha+\kappa}
	\|(\bar Z_{t}^{N})^{:m:}\|_{-\alpha},~m=1,2,3,~~ \\
	& \frac{  1_{\{t>{\lfloor t\rfloor}_\tau\}}{\lfloor t\rfloor}_\tau^{\mu_n  } }
  { (t-{{\lfloor t\rfloor}_\tau})^{\frac{\alpha}{2n}-\delta_n}}	\|(\bar Z^{N}_{t})^{:n:}-(\bar Z^{N}_{{{\lfloor t\rfloor}_\tau}})^{:n:}\|_{ {-\alpha}},~n=1,2,3\Big)\ge R\Big\} \wedge T
\end{aligned}	
\end{equation}
with $\mu_1=\frac{\alpha}{2}-\delta_1$, $\mu_n=\frac{n^2+1}{2n}\alpha-\delta_n+\kappa$, $n=2,3$,  by  setting $\inf \emptyset =\infty$.\\
{\textbf{Step 1: Estimate of $V_t^{N,M}$.}}  We split $V_t^{N,M}$ as $V_t^{N,M}=L_1+L_2$ with
\begin{equation}\label{L1L2}\begin{aligned}
		L_1:=&\int^{t\vee\tau}_\tau  P_N S_{t-r}\Big\{\Psi(Y_{ \lfloor r \rfloor_\tau}^{N,M},\underline{\bar Z}_{\lfloor r \rfloor_\tau}^{N} )
		- \Psi(Y_{   r  }^{N,M},\underline{\bar Z}_{  r  }^{N} )\Big\}dr,\\
		L_2:=
		&\int^{t\vee\tau}_\tau P_N S_{t-r} \Psi(Y_{\lfloor r\rfloor_\tau}^{N,M},\underline{\bar Z}_{\lfloor r\rfloor_\tau}^{N} ) \frac{ -\tau \| \Psi(Y_{\lfloor r\rfloor_\tau}^{N,M},\underline{\bar Z}_{\lfloor r\rfloor_\tau}^{N} )\| _{-\alpha} }
		{1+\tau \|\Psi(Y_{\lfloor r\rfloor_\tau}^{N,M},\underline{\bar Z}_{\lfloor r\rfloor_\tau}^{N} )\|  _{-\alpha}}  dr.
	\end{aligned}
\end{equation}
 For the simplification of the notations,   we use the decomposition
\begin{equation*} \label{Nota-F-De}
	\Psi(u,\underline { z})=F(u)+\widetilde{\Psi}(u,\underline{ z}),
\end{equation*}
with $u\in \C^\beta, \underline{ z}=( z,z^{:2:}, z^{:3:})$ that
\begin{equation*} \label{Nota-F}
	F(u):=\sum_{i=0}^3a_iu^i ,~
	\widetilde{\Psi}(u,\underline { z}):=\sum_{i=1}^3a_i z^{:i:}+3a_3(u^2 z+u z^{:2:})+2a_2u z .
\end{equation*}
Under the assumption that $0<\alpha<\beta,\alpha+\beta<2$ and applying Lemma  \ref{Multi-ineq} and Young's inequality,  for any $u\in\C^\beta,\underline{ z}=(z,z^{:2:},z^{:3:})$ with $z^{:n:}\in\C^{-\alpha}$,$n=1,2,3$, we easily have that ${F}(u)\in\C^\beta, \widetilde{\Psi}(u,\underline z)\in\C^{-\alpha}$ with
\begin{equation}\begin{gathered}\label{F-PSI}
		\|{F}(u)\|_{{\beta}}\lesssim 1+\|u\|^3_{{\beta}},\\
		\|\widetilde{\Psi}(u,\underline{z})\|_{-\alpha}\lesssim    1+\big(1+\|u\|^2_{\beta}\big)\| z\|_{-\alpha}
		+\big(1+\|u\|_{\beta}\big)\|z^{:2:}\|_{-\alpha}+\| z^{:3:}\|_{-\alpha},
	\end{gathered}
\end{equation}
and that for $v\in\C^\beta, \underline{w}=(w,w^{:2:},w^{:3:})$ with $w^{:n:}\in\C^{-\alpha}$, $n=1,2,3$
\begin{equation}\begin{gathered}\label{F-PSI2}
		\|{F}(u)-F(v)\|_{{\beta}}\lesssim \|u-v\|_{{\beta}}( 1+\|u\|^2_{{\beta}}+\|v\|^2_{{\beta}}),\\
		 \|\widetilde{\Psi}(u,\underline {z})-\widetilde{\Psi}(v,\underline{w})\|_{-\alpha}
		\lesssim \big\{ (1+\|u\|_{{\beta}} +\|v\|_{{\beta}})
		\| z\|_{-\alpha}+\|z^{:2:}\|_{-\alpha} \big\}\|u-v\|_{{\beta}} \\
		+(1 + \|v\|_{{\beta}}^2) \| z- w\|_{-\alpha}
		+ (1+\|v\|_{{\beta}} )\| z^{:2:}-w^{:2:}\|_{-\alpha}
		+\| z^{:3:}- w^{:3:}\|_{-\alpha}.
	\end{gathered}
\end{equation}
{\textbf{Item $L_1$.~}} Let $t\in[0,t_m\wedge \sigma^N_R)$.
Making use of   \eqref{est-PN}  and  \eqref{Heat-Smooth} we have
\begin{equation*}
	\begin{aligned}
\|L_1\|_{\beta} \lesssim
	\int^{t\vee\tau}_\tau &  \Big\{(t-r)^{-\frac \kappa2}\|F(Y_{r}^{N,M})-F(Y_{\lfloor r\rfloor_\tau}^{N,M})\|_\beta
	+(t-r)^{-\frac {\kappa+\alpha+\beta}2}
	\\&\cdot\|\widetilde{\Psi}(Y_{r}^{N,M},\underline {{\bar Z}}_r^{N})	
	-\widetilde{\Psi}(Y_{\lfloor r\rfloor_\tau}^{N,M},\underline {{\bar Z}}_{\lfloor r\rfloor_\tau}^{N})\|_{-\alpha}\Big\}dr,
	\end{aligned}
\end{equation*}
and by \eqref{F-PSI2} and Young's inequality for further estiamte that
\begin{equation}\label{L1EST}
	\begin{aligned}
&	\|L_1\|_{\beta} \lesssim	\int^{t\vee\tau}_\tau  \Big\{(t-r)^{-\frac \kappa2}\|Y_{r}^{N,M}-Y_{\lfloor r\rfloor_\tau}^{N,M}\|_\beta\big(1+\|Y_{r}^{N,M}\|^2_\beta+\|Y_{\lfloor r\rfloor_\tau}^{N,M}\|^2_\beta \big)\\
		&+(t-r)^{-\frac {\kappa+\alpha+\beta}2}
		\|Y_{r}^{N,M}-Y_{\lfloor r\rfloor_\tau}^{N,M}\|_\beta
		\sum_{k=1}^2\big(1+\|Y_{r}^{N,M}\|^{2-k}_\beta +
		\|Y_{\lfloor r\rfloor_\tau}^{N,M}\|_\beta^{2-k}  \big)
			\|({\bar Z}_{\lfloor r\rfloor_\tau}^{N})^{:k:}\|_{-\alpha}\\
		&+(t-r)^{-\frac {\kappa+\alpha+\beta}2} \sum_{k=1}^3\big(1+\|Y_{r}^{N,M}\|^{3-k}_\beta\big)
		\|({\bar Z}_{r}^{N})^{:k:}-({\bar Z}_{\lfloor r\rfloor_\tau}^{N})^{:k:} \|_{-\alpha} \Big\}dr,
	\end{aligned}
\end{equation}
where on the right-hand side of \eqref{L1EST},  by the identity \eqref{Tmild-2-NM} for any  $r \in[0,t]$  we have the term
\begin{equation}\label{Tmild-2-yr-r}\begin{aligned}
		Y_r^{N,M}-Y_{\lfloor r \rfloor_\tau }^{N,M}=&
		\int_\tau^{\lfloor r \rfloor_\tau\vee\tau}	(S_{r-\lfloor r\rfloor_\tau}-I)\frac{P_N S_{\lfloor r \rfloor_\tau-u} \Psi(Y_{\lfloor u\rfloor_\tau}^{N,M},\underline{\bar Z}_{\lfloor u\rfloor_\tau}^{N} )}
		{1+\tau \|\Psi(Y_{\lfloor u\rfloor_\tau}^{N,M},\underline{\bar Z}_{\lfloor u\rfloor_\tau}^{N} )\| _{-\alpha}}  du
	\\	&+\int^{r\vee\tau}_{\lfloor r \rfloor_\tau\vee\tau}\frac{P_N S_{r-u} \Psi(Y_{\lfloor u\rfloor_\tau}^{N,M},\underline{\bar Z}_{\lfloor u\rfloor_\tau}^{N} )}
		{1+\tau \|\Psi(Y_{\lfloor u\rfloor_\tau}^{N,M},\underline{\bar Z}_{\lfloor u\rfloor_\tau}^{N} )\|  _{-\alpha}}  du.
	\end{aligned}
\end{equation}
By construction of $Y^{N,M}$ we only need to estimate \eqref{Tmild-2-yr-r} for $r\in[\tau,t]$ with $t\in[\tau,t_m\wedge\sigma^N_R)$.
Using Lemma \ref{Heat-Smooth1} and \eqref{est-PN}  we estimate  the terms on the right-hand side of \eqref{Tmild-2-yr-r} and obtain that for any positive $\lambda<2$
 \begin{equation}\label{Sr-I}
  \begin{aligned}
   &(\lfloor r \rfloor_\tau-u)^{\frac{\lambda+\kappa}{2}}
   \|(S_{r-\lfloor r\rfloor_\tau}-I){P_N S_{\lfloor r \rfloor_\tau-u} \Psi(Y_{\lfloor u\rfloor_\tau}^{N,M},\underline{\bar Z}_{\lfloor u\rfloor_\tau}^{N} )}\|_\beta \\
    \lesssim &\tau^{\frac\lambda 2}
    \big(\|F(Y_{\lfloor u\rfloor_\tau}^{N,M})\|_\beta+(\lfloor r \rfloor_\tau-u)^{-\frac{\alpha+\beta}{2}}
   \|\widetilde{\Psi}(Y_{\lfloor u\rfloor_\tau}^{N,M},\underline{\bar Z}_{\lfloor u\rfloor_\tau}^{N} )\|_{-\alpha}\big),\\
  & (r-u)^{\frac{\kappa}{2}}
   \|P_N S_{r-u} \Psi(Y_{\lfloor u\rfloor_\tau}^{N,M},
    \underline{\bar Z}_{\lfloor u\rfloor_\tau}^{N} )\|_{\beta}\\
    \lesssim &\|F(Y_{\lfloor u\rfloor_\tau}^{N,M})\|_\beta+(r-u)^{-\frac{\alpha+\beta}{2}}
   \|\widetilde{\Psi}(Y_{\lfloor u\rfloor_\tau}^{N,M},\underline{\bar Z}_{\lfloor u\rfloor_\tau}^{N} )\|_{-\alpha}.
  \end{aligned}
\end{equation}
Here we choose $\lambda\in(10\mu,~2-5\alpha-\beta)$ and recall $K=\tau^{-\mu}$ with $\mu\in(0,\frac15-\frac{5\alpha+\beta}{10})$. Then inserting \eqref{Sr-I} into  \eqref{Tmild-2-yr-r},
by \eqref{F-PSI} and the inequality   $x^{-a}\lesssim x^{-b}$ for uniform $x\in[0,T]$ with $0\leq a\leq b$
\begin{equation} \label{Yt-tmild-ne}
 \begin{aligned}
 	&1_{\Omega_{N,m-1}^K} \|Y_r^{N,M}-Y_{\lfloor r \rfloor_\tau }^{N,M}\|_{\beta}   \lesssim_R 	\tau^{-3\mu} \int^{r\vee\tau}_{\lfloor r\rfloor_\tau\vee\tau}
 	(r-u)^{-\frac{\kappa+\alpha+\beta}{2}}
 	  {\lfloor u\rfloor}^{-2 \alpha -\kappa}_\tau du\\
&+ 	\tau^{\frac\lambda 2-3\mu}  \int_\tau^{\lfloor r \rfloor_\tau\vee\tau}
 	(\lfloor r \rfloor_\tau-u)^{-\frac{\kappa+\lambda+\alpha+\beta}{2}}
 	{\lfloor u\rfloor}^{-2 \alpha -\kappa}_\tau du \\ &
 		 \lesssim_R
		\tau^{ \frac\lambda2-3\mu}+
	  \tau^{1-\frac{5\alpha+\beta}{2}-3\mu-2\kappa} \lesssim_R\tau^{ \frac\lambda2-3\mu},
 \end{aligned}
\end{equation}
where for $\kappa>0$ sufficiently small, the second inequality follows from   \eqref{Tineq2} and  \eqref{coef}, the third inequality follows from the inequality $\tau^a\lesssim \tau^b$   with $0\le b\le a$.  Immediately, we have
\begin{equation}
\label{Yt-tmild-ne2}
1_{\Omega_{N,m-1}^K} \|Y_r^{N,M}\|_\beta \lesssim 1+K,~~r\in[0,t_m\wedge \sigma^N_R).
\end{equation}
Inserting  \eqref{Yt-tmild-ne} and \eqref{Yt-tmild-ne2}  into \eqref{L1EST}, again by \eqref{coef} and \eqref{Tineq2} we have
for   $\delta_n\in(0,\frac \alpha{2n})$, $n=1,2,3$
\begin{equation}\label{L1EST-02}
	\begin{aligned}
	1&_{\Omega_{N,m-1}^K}	\|L_1\|_{\beta}   \lesssim_R
		\int_\tau^{t\vee\tau}  \Big\{\tau^{\frac\lambda2- 5\mu}(t-r)^{-\frac \kappa2}   +(t-r)^{-\frac {\kappa+\alpha+\beta}2}
		\big(\tau^{ \frac\lambda2-4\mu}
			{\lfloor r\rfloor}_\tau^{-\alpha-\kappa}
	\\&~	+ \tau^{\frac\alpha{2}-\delta_1-2\mu} {\lfloor r\rfloor}_\tau^{\delta_1-\frac \alpha2-\kappa}
	 	+   \tau^{\frac\alpha{4}-\delta_2- \mu} {\lfloor r\rfloor}_\tau^{\delta_2-\frac 54\alpha-\kappa}
	+\tau^{\frac\alpha{6}-\delta_3} {\lfloor r\rfloor}_\tau^{\delta_3 -\frac5{3}\alpha-\kappa}      \big)
	  \Big\}dr\\
	&~  \lesssim_R   \tau^{\min\{ \frac\lambda2-5\mu, \frac\alpha 2-\delta_1-2\mu, \frac\alpha 4-\delta_2- \mu, \frac\alpha 6-\delta_3     \}}
	\end{aligned}
\end{equation}
uniformly for $t\in[0,t_m\wedge \sigma^N_R)$.\\
{\textbf{Item $L_2$.~}} Let $t\in[0,t_m\wedge \sigma^N_R)$, we note that
\[
\|L_2\|_\beta  \lesssim \tau
\int^{t\vee\tau}_\tau  \|P_N S_{t-r} \Psi(Y_{\lfloor r\rfloor_\tau}^{N,M},\underline{\bar Z}_{\lfloor r\rfloor_\tau}^{N} )\|_\beta \|\Psi(Y_{\lfloor r\rfloor_\tau}^{N,M},\underline{\bar Z}_{\lfloor r\rfloor_\tau}^{N} )\|_{-\alpha} dr,
\]
where by Lemma  \ref{Multi-ineq} and Young's inequality
\[
\|\Psi(Y_{\lfloor r\rfloor_\tau}^{N,M},\underline{\bar Z}_{\lfloor r\rfloor_\tau}^{N} )\|_{-\alpha}
\lesssim  \sum_{k=0}^3 \big(1+\|Y_{\lfloor r\rfloor_\tau}^{N,M}\|_\beta^{3-k} \big)  \|({\bar Z}_{\lfloor r\rfloor_\tau}^{N})^{:k:}\|_{-\alpha} .
\]
The following procedure is similar as we  treat   $L_1$. For     $\kappa>0$ sufficiently small
\begin{equation}\label{L2-est1}
	\begin{aligned}
		 	1_{\Omega_{N,m-1}^K}	\|L_2\|_\beta
		\lesssim _R	&  \tau   \int^{t\vee\tau}_\tau\big\{ \tau^{-3\mu}(t-r)^{-\frac{\kappa}2}
			  +	\tau^{-2\mu}	(t-r)^{-\frac{\alpha+\beta+\kappa}2} {\lfloor r\rfloor}_\tau^{ -\kappa}
			  \\& +	
			    	(t-r)^{-\frac{\alpha+\beta+\kappa}2} {\lfloor r\rfloor}_\tau^{-2\alpha-\kappa}  	\big\}
	\cdot\big	\{ \tau^{-3 \mu} +{\lfloor r\rfloor}_\tau^{-2\alpha-\kappa}\big\}	  	
   	dr\\
		\lesssim_R& \tau^{1-6\mu}+\tau^{1-2\alpha-\kappa}.
	\end{aligned}
\end{equation}
 Hence, we conclude from \eqref{L1EST-02} and \eqref{L2-est1}   that there exists $\nu>0$ such that  for uniform  $N,M\in\mathbb N$ and   $m=0,1,...,M$
\begin{equation}\label{V=L12-est1}
\sup_{t\in[0,t_m\wedge\sigma^N_R]}	1_{\Omega_{N,m-1}^K}\|V_t^{N,M}\|_\beta \lesssim_R \tau^{\nu}  .
\end{equation}
{\textbf {Step 2: Estimate of $\hat{Y}_t^{N,M}$.}}  \eqref{V=L12-est1}, \eqref{Test-ZN-HOLD-INI2}, together with  \cite[Theorem 5.1]{LR13} imply that \begin{equation}\label{Test-Yset-ti-EHOLD1}
	\sup_{N,M\in\mathbb N}\E \sup_{m\in\{0,...,M\}}\sup_{t\in[0,t_m\wedge\sigma^N_R]}1 _{\Omega^K_{N,m-1}} \|{\hat Y}_{t }^{N,M}\|^p_{\beta}<\infty.
\end{equation}
\medskip

In addition, Theorem \ref{Z-INI} implies that for uniform $N\in\mathbb N$
 \begin{equation} \label{t=T,1}
	\lim_{R\rightarrow\infty}P(\sigma^N_{R}=T)=1.
\end{equation}
 Finally, \eqref{Test-Yset-ti-EHOLD} follows from \eqref{V=L12-est1}, \eqref{Test-Yset-ti-EHOLD1} and \eqref{t=T,1}.
\end{proof}

By definition \eqref{set-bdd} we easily have $\Omega_{N,m}^K\subset \Omega_{N,{m-1}}^K$ for any $K>0$ and $m=0,1...,M$. As an immediate consequence of \eqref{Test-Yset-ti-EHOLD}, we have
\begin{corollary} \label{Test-Yset-Ti-HOLD-Th2}
Assume the setting in  Lemma \ref{Test-Yset-Ti-HOLD-Th}.
Then
\begin{equation}\label{Test-Yset-ti-EHOLD21}
	\sup_{N,M\in\mathbb N}\E \sup_{m\in\{0,...,M\}}\sup_{t\in[0,t_m]}1 _{\Omega^K_{{N,m}}}\|Y_t^{N,M}\|^p_{\beta}<\infty.
\end{equation}
\end{corollary}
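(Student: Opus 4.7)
The plan is to deduce the corollary directly from Lemma \ref{Test-Yset-Ti-HOLD-Th} by exploiting the monotonicity of the sequence of events $\{\Omega_{N,m}^K\}_{m=0}^M$ in the index $m$. First I would record, straight from definition \eqref{set-bdd}, the inclusion
\[
\Omega_{N,m}^K = \Big\{\sup_{j\in\{0,\ldots,m\}}\|Y_{t_j}^{N,M}\|_{\beta} \le K\Big\} \subset \Big\{\sup_{j\in\{0,\ldots,m-1\}}\|Y_{t_j}^{N,M}\|_{\beta} \le K\Big\} = \Omega_{N,m-1}^K,
\]
because the condition defining $\Omega_{N,m}^K$ adds the extra constraint $\|Y_{t_m}^{N,M}\|_\beta \le K$ to the one defining $\Omega_{N,m-1}^K$. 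Pointwise, this gives $1_{\Omega_{N,m}^K} \le 1_{\Omega_{N,m-1}^K}$.

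With this in hand, for every $t \in [0, t_m]$ and every $m \in \{0,1,\ldots,M\}$ we have
\[
1_{\Omega_{N,m}^K}\|Y_t^{N,M}\|_\beta^p \;\le\; 1_{\Omega_{N,m-1}^K}\|Y_t^{N,M}\|_\beta^p.
\]
Taking $\sup_{t\in[0,t_m]}$, then $\sup_{m\in\{0,\ldots,M\}}$, and then $\E$, the left-hand side becomes the quantity in \eqref{Test-Yset-ti-EHOLD21}, while the right-hand side is bounded uniformly in $N,M\in\mathbb N$ by Lemma \ref{Test-Yset-Ti-HOLD-Th}, i.e.\ by \eqref{Test-Yset-ti-EHOLD}. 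This immediately yields the desired estimate.

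There is essentially no obstacle here: all the analytic work (the decomposition $Y^{N,M}=V^{N,M}+\hat Y^{N,M}$, the bound \eqref{V=L12-est1} on $V^{N,M}$, the moment bound on $\hat Y^{N,M}$, and the localization by the stopping time $\sigma^N_R$) was already carried out in the proof of Lemma \ref{Test-Yset-Ti-HOLD-Th}. The corollary is just the observation that replacing the indicator $1_{\Omega_{N,m-1}^K}$ by the smaller indicator $1_{\Omega_{N,m}^K}$ can only decrease the integrand, so the uniform moment bound transfers for free.
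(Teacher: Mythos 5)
Your argument is correct and is exactly the paper's: the authors also note that $\Omega_{N,m}^K\subset\Omega_{N,m-1}^K$ by definition \eqref{set-bdd}, so $1_{\Omega_{N,m}^K}\le 1_{\Omega_{N,m-1}^K}$ and the corollary follows immediately from \eqref{Test-Yset-ti-EHOLD}. No issues.
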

\medskip
\begin{proof}[Proof of Theorem \ref{Test-Y-HOLD-Th}]
Let $R>0$ and $\sigma^N_R$ is given by \eqref{stp01}. With the help of \eqref{Test-Yset-ti-EHOLD21} and \eqref{t=T,1}, it is sufficient to prove for the above $K=\tau^{-\mu}$
\begin{equation}\label{Test-Yset-ti-EHOLD2}
	\sup_{N,M\in\mathbb N}\E \sup_{m\in\{0,...,M\}}\sup_{t\in[0,t_m\wedge\sigma^N_R]}1 _{\bar\Omega^K_{{N,m}}} \|{ Y}_{t }^{N,M}\|^p_{\beta}<\infty.
\end{equation}
By \eqref{Tmild-2-NM} it is easy to have a rough estimate that for uniform $t\in[0,T]$
\begin{equation}\label{tauYN-1}
\|Y^{N,M}_t\|_\beta\lesssim\tau^{-1}\int^{t}_0(t-s)^{-\frac{\alpha+\beta+\kappa}{2}}
ds \lesssim \tau^{-1}
\end{equation}
with $\kappa>0$ sufficiently small.
We can also see that
\begin{equation*}\label{set-depOmeg}
\bar\Omega^K_{{N,m}}=\bar\Omega^K_{{N,m-1}}+
\Omega^K_{{N,m-1}}\cap\{ \|Y_{t_m}^{N,M}\|_{\beta} >K \},
\end{equation*}
which implies that
\begin{equation}\label{set-depOmeg}
1_{\bar\Omega^K_{{N,m}}}=1_{\bar\Omega^K_{{N,m-1}}}+
1_{\Omega^K_{{N,m-1}}}\cdot1_{\{ \|Y_{t_m}^{N,M}\|_{\beta} >K \}}=\sum_{i=0}^m 1_{ \Omega^K_{{N,i-1}}}\cdot
1_{\{ \|Y_{t_i}^{N,M}\|_{\beta} >K \}}
\end{equation}
with $1_{\bar\Omega^K_{N,-1}}=0$. Inserting \eqref{Test-Yset-ti-EHOLD} and \eqref{tauYN-1} into  \eqref{set-depOmeg} and by Chebyshev's inequality, we have
\begin{equation*}\begin{aligned}
& ~~~~~~~~\E\sup_{m\in\{0,...,M\}}\sup_{t\in[0,t_m\wedge\sigma^N_R]} 1_{\bar\Omega^K_{N,m}} \|Y_{t }^{N,M}\|_\beta^p\\
\lesssim &
\tau^{-p}K^{-\frac{p+1}\mu}\sum_{i=0}^M \E(
 { 1_{\Omega^K_{N,i-1}} \|Y_{t_i}^{N,M}\|^{\frac{p+1}{\mu}}_{\beta}  })\lesssim \tau(M+1)\lesssim 1.
 \end{aligned}
\end{equation*}
 \eqref{Test-Yset-ti-EHOLD2} holds and the proof is completed.
\end{proof}
In the end of this subsection, recall $X$ the solution  to \eqref{initial-Eq} and its
  full-discrete approximation $X^{N,M}$ given in \eqref{nNMX}.   We let $\beta$  close to $\alpha$ ($\beta>\alpha$) and  conclude by \eqref{est-Y-EHOLD}, \eqref{Test-Y-EHOLD} and \eqref{Test-ZN-HOLD-INI2}
   that   for every $\alpha\in(0,1/3)$, $\alpha'>0$ and $p\ge2$
\begin{equation} \label{TX=Y+Z-EST}
\E \sup_{0\leq t\leq T}\|X_{t}\|^p_{-\alpha}<\infty,~~
\sup_{N,M\in\mathbb N}\E \sup_{0\leq t\leq T} t^{\alpha'p}\|X_{t}^{N,M}\|^p_{-\alpha} <\infty.
\end{equation}
\medskip
\begin{lemma} \label{lemTineq}
Let $a,b>0, c\ge0$ with $a+b<1+c$. Then\\
 $(i)$ for any $t>0$
 \begin{equation}\label{ineq}
	t^c\int_0^t (t-s)^{-a}s^{-b}ds \lesssim  t^{1+c-a-b}.
\end{equation}
$(ii)$ for any $t\geq \tau$
\begin{equation}  \label{Tineq2}
t^c\int_\tau^t (t- s )^{-a}  {\lfloor s\rfloor}^{-b}  ds \lesssim (t-\tau)^{1+c-a-b}.
\end{equation}
\end{lemma}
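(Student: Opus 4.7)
The plan is as follows. For part (i), I would apply the classical Beta function identity via the scaling substitution $s = tu$, which gives
\[
\int_0^t (t-s)^{-a} s^{-b}\, ds \;=\; t^{1-a-b} \int_0^1 (1-u)^{-a} u^{-b}\, du \;=\; t^{1-a-b}\, B(1-a, 1-b).
\]
The Beta integral is finite provided $a, b \in (0,1)$, which is the implicit integrability requirement (compatible with the displayed hypothesis $a+b < 1+c$ together with $a, b > 0$). Multiplying by $t^c$ then yields the desired bound $t^{1+c-a-b}$.

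For part (ii), the pivotal observation is the pointwise comparison $\lfloor s\rfloor_\tau > s - \tau$ valid for every $s \geq \tau$: writing $\lfloor s\rfloor_\tau = k\tau$ with $k\tau \leq s < (k+1)\tau$, one reads off $s - \tau < k\tau = \lfloor s\rfloor_\tau$. Consequently $\lfloor s\rfloor_\tau^{-b} \leq (s-\tau)^{-b}$, and the shift-and-scale substitution $s = \tau + (t-\tau)u$ reduces the integral as
\[
\int_\tau^t (t-s)^{-a}\, \lfloor s\rfloor_\tau^{-b}\, ds \;\leq\; \int_\tau^t (t-s)^{-a}(s-\tau)^{-b}\, ds \;=\; (t-\tau)^{1-a-b}\, B(1-a, 1-b).
\]
To absorb the remaining $t^c$ factor into the stated $(t-\tau)^{1+c-a-b}$, I would case-split: if $t \geq 2\tau$ then $t \leq 2(t-\tau)$, so $t^c \lesssim (t-\tau)^c$ and the claim follows at once; if instead $\tau \leq t < 2\tau$ then $\lfloor s\rfloor_\tau = \tau$ on the entire interval of integration, and one evaluates the remaining single-power integral $\tau^{-b}\int_\tau^t (t-s)^{-a}\,ds = \tau^{-b}(t-\tau)^{1-a}/(1-a)$ in closed form, after which the $\tau$- and $(t-\tau)$-powers are combined using $t \asymp \tau$ up to constants of order one on this regime.

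The main obstacle sits entirely in part (ii): the direct Beta-function estimate only delivers $t^c(t-\tau)^{1-a-b}$, and recovering the stronger $(t-\tau)^{1+c-a-b}$ requires exploiting the fact that $\lfloor s\rfloor_\tau$ is actually pinned to $\tau$ on $[\tau,2\tau]$ rather than merely dominated by $(s-\tau)^{-b}$. Once this short case analysis is in place, both parts reduce to textbook Beta-function manipulations, and the pointwise comparison $\lfloor s\rfloor_\tau > s-\tau$ is the only ingredient peculiar to the discretized setting.
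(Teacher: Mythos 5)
The paper states this lemma without proof, so there is nothing to compare against; I can only judge your argument on its own terms. Part (i) is correct: the scaling $s=tu$ reduces everything to the Beta integral $B(1-a,1-b)$, and you rightly flag that its finiteness requires $a,b<1$ — an assumption not literally implied by $a+b<1+c$ (take $a=3/2$, $b=1/10$, $c=1$) but clearly intended and satisfied in every application. The comparison $\lfloor s\rfloor_\tau\ge s-\tau$ and the case $t\ge 2\tau$ of part (ii) are also correct.

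The gap is in your final step for $\tau\le t<2\tau$. There the left-hand side equals $t^c\tau^{-b}(t-\tau)^{1-a}/(1-a)$, which is comparable to $\tau^{c-b}(t-\tau)^{1-a}$, and matching this against $(t-\tau)^{1+c-a-b}=(t-\tau)^{1-a}(t-\tau)^{c-b}$ requires $\tau^{c-b}\lesssim(t-\tau)^{c-b}$. Since $0<t-\tau<\tau$ on this regime, that holds precisely when $c\le b$ and fails when $c>b$: with $a=b=1/2$, $c=1$ and $t=\tau(1+\epsilon)$ the ratio of the two sides is of order $\epsilon^{-1/2}\to\infty$, so \eqref{Tineq2} as stated is in fact false for $c>b$. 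Your phrase ``the powers are combined'' glosses over exactly the point where the estimate can break. To repair the proof you must either add the hypothesis $c\le b$ (which does hold in the paper's uses of \eqref{Tineq2} where the sharp $(t-\tau)$-power is needed, e.g.\ $b\ge\gamma+\alpha+\kappa>\gamma=c$ in the $L_1$ estimate of Theorem \ref{MAIN-THM1-2}) or weaken the conclusion to $t^c(t-\tau)^{1-a-b}$ — or simply to a constant when $1+c-a-b>0$ — which is all the remaining applications require.
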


\subsection {Strong convergence rates}\label{SEC4-2} In this subsection, we analyze the error estimate of $Y$ and $Y^{N,M}$ in the space $\C^\beta$, utilizing two distinct norms $\|\cdot\|_\beta$ and $t^\gamma\|\cdot\|_\beta$ for some $\gamma>0$ (see Theorems \ref{MAIN-THM1} and \ref{MAIN-THM1-2} respectively). The parameter $\gamma$ is crucial for properly defining the linear term $\bar Z_t$ as $t$ approaches $0$, and the latter norm helps achieve superior convergence rates. Consequently, leveraging the convergence rate for the Galerkin approximation $\bar Z$ of $\bar Z$ obtained in \cite{MZ2019} (see \eqref{ZTN-ZN-n-ini} for details), we establish space and time convergence rates of the approximate scheme \eqref{nNMX}.

Let $\alpha\in(0,1)$, $X_0\in\C^{-\alpha}$, $p\ge2$.  \cite[Theorem3.5]{MZ2019}  showed that for any  $\kappa,\kappa_1,\delta>0$
\begin{equation}\label{ZTN-ZN-n-ini}
      \begin{aligned}
    &\mathbb E\sup_{0\leq t\leq T}  t^{ (n-1)(\alpha+\kappa)p +\kappa_1  p}
    \|\bar Z^{:n:}_{t}-(\bar Z^N_{t})^{:n:}\|_{ {-\alpha}}^p\\
    & \lesssim   {(\log N)^{2p}}{N^{-2\kappa_1 p}}+ (1+N^2)^{-\frac{ p(\alpha-\delta)} {2}}.
\end{aligned}
\end{equation}

 Following the notations in the above section, for some fixed $R>0$ sufficiently large, we define stopping times
 \begin{equation*}\label{stp1}
 \sigma^R:=\inf \Big\{t\leq T:  \max \big(  \|Y_t\|_\beta, t^ {(n-1)(\alpha+\kappa)} \|\bar Z_t^{:n:}\|_{ {-\alpha}},n=1,2,3 \big) \geq R \Big\}\wedge T,
\end{equation*}
\[\sigma_{N,M}^{R} :=\inf\Big\{t\leq T:   \|Y^{N,M}_t \|_{\beta}  >R  \Big\}\wedge T,\]
\[ \nu_{N}^{R}:=\inf \Big\{t\leq T:
               \max_{n=1,2,3}
  t^{(n-1)(\alpha+\kappa)+{\kappa}_1 }\|
 (\bar Z^N_t)^{:n:}\|_{ {-\alpha}}   >R  \Big\}\wedge T,
\]
with $\kappa>0$ sufficiently small and  let
 \begin{equation}\label{stp2}
 \varsigma_{N,M}^{R }:=\sigma^R\wedge \sigma_{N,M}^{R} \wedge \nu_{N}^{R}.
\end{equation}
Then by \eqref{Test-ZN-HOLD-INI2}, \eqref{est-Y-EHOLD} and \eqref{Test-Y-EHOLD} it is obvious that for uniform $N,M\in\mathbb N$
 \begin{equation}\label{stp2-lit}
 \lim_{R\to\infty}\varsigma_{N,M}^{R }=T.
\end{equation}

In the following theorem we consider pathwise error estimate for space-time approximation $Y^{N,M}$ given by \eqref{Tmild-2-NM}.
\begin{theorem}\label{MAIN-THM1}
Let $p\ge2$, $\alpha, \beta $ satisfy \eqref{coef} and
$\delta >0$.  Then for  uniform large $N,M\in \mathbb N$
\begin{equation} \label{Y-YNM-new}
 \begin{aligned}
 \Big(\E \sup_{t\in[0, T]}
 \|Y_t-Y_{t}^{N,M}\|_\beta^p\Big)^{1/p} \lesssim &   N^{\delta-\min\{2-5\alpha-\beta,\alpha\}}+
 M^{\delta-\min\{1-\frac{5\alpha+\beta}2,\frac{\alpha}{6}\}}. \end{aligned}
\end{equation}
\end{theorem}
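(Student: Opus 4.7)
The plan is to prove the bound pathwise on the stopping interval $[0,\varsigma_{N,M}^R]$ from \eqref{stp2}, where $Y$, $Y^{N,M}$ and the Wick powers $(\bar Z^N)^{:n:}$ are all controlled by the constant $R$; the $L^p$-bound on all of $[0,T]$ then follows by sending $R\to\infty$ using \eqref{stp2-lit} together with the higher-moment bounds \eqref{est-Y-EHOLD} and \eqref{Test-Y-EHOLD}, in the same Chebyshev-type manner as the end of the proof of Theorem \ref{Test-Y-HOLD-Th}.

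Subtracting \eqref{Tmild-2-NM} from \eqref{Tmild-2} and regrouping, I would decompose
\begin{align*}
Y_t - Y_t^{N,M} &= \int_0^{t\wedge\tau} S_{t-s}\Psi(Y_s,\underline{\bar Z}_s)\,ds
+\int_{t\wedge\tau}^t (I-P_N)S_{t-s}\Psi(Y_s,\underline{\bar Z}_s)\,ds\\
&\quad + \int_{t\wedge\tau}^t P_N S_{t-s}\bigl[\Psi(Y_s,\underline{\bar Z}_s)-\Psi(Y_{\lfloor s\rfloor_\tau}^{N,M},\underline{\bar Z}_{\lfloor s\rfloor_\tau}^N)\bigr]ds\\
&\quad +\int_{t\wedge\tau}^t P_N S_{t-s}\Psi(Y_{\lfloor s\rfloor_\tau}^{N,M},\underline{\bar Z}_{\lfloor s\rfloor_\tau}^N)
\frac{\tau\|\Psi(Y_{\lfloor s\rfloor_\tau}^{N,M},\underline{\bar Z}_{\lfloor s\rfloor_\tau}^N)\|_{-\alpha}}{1+\tau\|\Psi\|_{-\alpha}}\,ds,
\end{align*}
labelling the four pieces $\mathcal E_0,\mathcal E_1,\mathcal E_2,\mathcal E_3$: a missing-slice term, the Galerkin projection error, the time-freezing error, and the taming error.

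The three easy pieces admit direct bounds. For $\mathcal E_0$, the singularity $\|\Psi(Y_s,\underline{\bar Z}_s)\|_{-\alpha}\lesssim_R s^{-2\alpha-\kappa}$ (driven by $\bar Z^{:3:}$ through \eqref{F-PSI} and \eqref{Test-ZN-HOLD-INI2}) combined with Schauder \eqref{Heat-Smooth} and the Beta integral \eqref{ineq} gives $\|\mathcal E_0(t)\|_\beta \lesssim_R \tau^{1-(5\alpha+\beta)/2-\kappa}$, matching the asserted temporal rate after absorbing $\kappa$ into $\delta$. For $\mathcal E_1$, I would apply \eqref{est-P-PN} to trade $N^{-\lambda}(\log N)^2$ for $\lambda$ extra derivatives absorbed through $S_{t-s}$; the binding constraint comes from the $\widetilde\Psi$-part, where the kernel $(t-s)^{-(\alpha+\beta+\lambda)/2}$ remains integrable only for $\lambda<2-5\alpha-\beta$, yielding $\|\mathcal E_1(t)\|_\beta \lesssim_R N^{\delta-(2-5\alpha-\beta)}$. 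For $\mathcal E_3$ the explicit $\tau$ factor combined with the $R$-bound on $\Psi$ and Schauder produces $\|\mathcal E_3(t)\|_\beta \lesssim_R \tau^{1-(\alpha+\beta)/2-\kappa}$, which is dominated by the temporal rate.

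The core of the work lies in $\mathcal E_2$. Using the Lipschitz estimate \eqref{F-PSI2} I would split the integrand into four contributions: the increment $Y_s-Y_{\lfloor s\rfloor_\tau}$, controlled via the mild form \eqref{Tmild-2} as in the derivation of \eqref{Tmild-2-yr-r}; the increments $\bar Z_s^{:n:}-\bar Z_{\lfloor s\rfloor_\tau}^{:n:}$, controlled by Theorem \ref{Z-INI} with the worst case $n=3$ giving $\tau^{\alpha/6-\delta}$; the spatial Galerkin errors $\bar Z_{\lfloor s\rfloor_\tau}^{:n:}-(\bar Z_{\lfloor s\rfloor_\tau}^N)^{:n:}$, controlled by \eqref{ZTN-ZN-n-ini} to produce the spatial rate $N^{-\alpha+\delta}$; and finally the self-referential error $Y_{\lfloor s\rfloor_\tau}-Y_{\lfloor s\rfloor_\tau}^{N,M}$, which feeds back into the right-hand side through the singular but integrable kernel $(t-s)^{-(\alpha+\beta)/2}$, integrability being guaranteed by \eqref{coef}. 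The main obstacle is the simultaneous bookkeeping of the many singular time-weights $s^{-\mu_n}$, $s^{-2\alpha}$ appearing alongside the Schauder kernel: these must be handled by repeated application of \eqref{ineq}--\eqref{Tineq2} without losing additional powers of $\tau$ or $N$. Once this accounting is done, a Henry--Gronwall lemma applied to $u(t):=\sup_{r\le t\wedge\varsigma_{N,M}^R}\|Y_r-Y_r^{N,M}\|_\beta$ yields the asserted pathwise bound, and the unlocalisation step completes the argument.
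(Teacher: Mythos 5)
Your proposal is correct and follows essentially the same route as the paper: localization by the stopping time $\varsigma_{N,M}^{R}$ from \eqref{stp2}, a decomposition into Galerkin, time-freezing and taming errors, estimates via the Schauder bounds, \eqref{F-PSI}--\eqref{F-PSI2}, the time regularity of Theorem \ref{Z-INI}, the Galerkin rate \eqref{ZTN-ZN-n-ini} and Lemma \ref{lemTineq}, followed by Gronwall and unlocalization through \eqref{stp2-lit}, \eqref{est-Y-EHOLD} and \eqref{Test-Y-EHOLD}. The only differences are bookkeeping: the paper splits your $\mathcal E_2$ into a same-time comparison $I_2$ and a freezing term $L_1$ (cf.\ \eqref{decomp-2} and \eqref{I1I2}) rather than telescoping inside one integral, and it does not isolate your slice term $\mathcal E_0$ explicitly, but the individual estimates and the resulting exponents coincide.
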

\begin{proof} Since by Jensen's inequality
 \begin{equation*}
 \begin{aligned}
 &~~~~\E \sup_{t\in[0, T]}
 \|Y_t-Y_{t}^{N,M}\|_\beta^p\le
 \E \sup_{t\in[0, \varsigma_{N,M}^{R }]}
 \|Y_t-Y_{t}^{N,M}\|_\beta^p
 \\
 &+\big(\mathbb P(\varsigma_{N,M}^{R }<T)\big)^{1/2}\cdot\big(\E \sup_{t\in[0, T]}
 \|Y_t\|^{2p}_\beta+\E \sup_{t\in[0, T]}\|Y_{t}^{N,M}\|_\beta^{2p}\big)^{1/2}
  \end{aligned}
\end{equation*}
 with $\varsigma_{N,M}^{R }$ given by \eqref{stp2} and $\kappa_1=\tilde \kappa\in(0,1-\frac{5\alpha+\beta}{2})$. Then together with \eqref{stp2-lit}, \eqref{est-Y-EHOLD} and \eqref{Test-Y-EHOLD}, it is sufficient  to prove for uniform large $N,M\in\mathbb N$
\begin{equation}  \label{YY-YNM}
 \begin{aligned}
 \Big(\E \sup_{t\in[0, \varsigma_{N,M}^{R}]}
 \|Y_t-Y_{t}^{N,M}\|_\beta^p\Big)^{1/p} \lesssim_R &   N^{\delta-\min\{2-5\alpha-\beta,\alpha\}}+
 M^{\delta-\min\{1-\frac{5\alpha+\beta}2,\frac{\alpha}{6}\}}
 \end{aligned}
\end{equation}
with fixed large $R$. Comparing with \eqref{Tmild-2} and \eqref{Tmild-2-NM} we have decomposition
\begin{equation}\label{decomp-2}
    Y_t- Y_t^{N,M}=I_1+I_2+L_1+L_2
\end{equation}
  with   $L_1,L_2$ given in \eqref{L1L2} and
  \begin{equation}\begin{aligned}\label{I1I2}
 I_1 =&\int_0^t \big(I-P_N\big )S_{t-s}\Psi(Y_s ,\underline{\bar Z}_s )ds,\\
 I_2= &\int_0^t P_NS_{t-s}\Big\{ \Psi(Y_s ,\underline{\bar Z}_s )
     -    \Psi(Y_s^{N,M},\underline{\bar Z}_s^{N}) \Big\} ds.
  \end{aligned}\end{equation}
{\textbf{Estimate for  $I_1$:}} According to the condition \eqref{coef}, we can choose  $\delta\in(0,2-5\alpha-\beta)$  and set $\lambda=2-5\alpha-\beta-\delta$ ($\lambda>0$).  Then for uniform $t\in[0, \varsigma_{N,M}^{R}]$
   \begin{equation} \label{Test-I11}\begin{aligned}
   		 \|I_1\|_\beta
   		& \lesssim  \frac{(\log N)^2}{N^\lambda}\int_0^t
   	\|	S_{t-s}\Psi(Y_s ,\underline{\bar Z}_s ) \|_{\beta+\lambda} ds\\
   &	\lesssim_R  \frac{(\log N)^2}{N^\lambda}\int_0^t
    (t-s)^{-\frac{\alpha+\beta+\lambda}{2}} s^{- 2\alpha-\kappa} ds\\
   	&\lesssim_R  \frac{(\log N)^2}{N^{2-5\alpha-\beta-\delta}}.
   		 \end{aligned}\end{equation}
The first inequality follows from \eqref{est-P-PN}, the second inequality follows from  \eqref{Heat-Smooth} and \eqref{F-PSI}, and the last inequality follows from  \eqref{ineq}. \\  	
{\textbf{Estimate for  $I_2$:}}  For uniform $t\in[0, \varsigma_{N,M}^{R}]$ we have
  \begin{equation}\label{Test-I1}\begin{aligned}
            \|I_2\|_\beta
    \lesssim_R &\int_0^t
     (t-s)^{-\frac{\alpha+\beta+\kappa}{2}}  \Big \{s^{-\alpha- \kappa }
      \|Y_s-Y_s^{N,M}\|_{{\beta}}
    +  \sum_{n=1}^3  \|(\bar Z_s)^{:n:} -(\bar Z^{N}_s)^{:n:}\|_{{-\alpha}}  \Big\}ds
\\ \lesssim_R  &\int_0^t   (t-s)^{-\frac{\alpha+\beta+\kappa}{2}} s^{-\alpha-\kappa }  \|Y_s-Y_s^{N,M}\|_{{\beta}}    ds
    \\&~~~~~~ +   \sum_{n=1}^3\sup_{0\leq s \leq T} s^{(n-1)(\alpha+\kappa)+\tilde\kappa}\|(\bar Z_s)^{:n:} -(\bar Z^{N}_s)^{:n:}\|_{{-\alpha}},
 \end{aligned}\end{equation}
where the first inequality follows by \eqref{Heat-Smooth}, \eqref{est-PN} and \eqref{F-PSI2},  the second inequality follows by \eqref{coef}, \eqref{ineq} and $\tilde\kappa<1-\frac{5\alpha+\beta}{2}$.

 To obtain better convergence rate we restart the  estimates for $L_1$ and $L_2$.\\
{\textbf{Estimate for  $L_1$:  }} Recall \eqref{L1EST} and we have for uniform $t\in[0, \varsigma_{N,M}^{R}]$

\begin{equation}\label{L1ESTRE}
	\begin{aligned}
&	\|L_1\|_{\beta} \lesssim_R	\int^{t\vee\tau}_\tau  \Big\{ (t-r)^{-\frac {\kappa+\alpha+\beta}2} \lfloor r\rfloor_\tau^{-\alpha-\kappa}
		\|Y_{r}^{N,M}-Y_{\lfloor r\rfloor_\tau}^{N,M}\|_\beta
		 \\
		&+(t-r)^{-\frac {\kappa+\alpha+\beta}2} \sum_{n=1}^3 \|({\bar Z}_{r}^{N})^{:n:}-({\bar Z}_{\lfloor r\rfloor_\tau}^{N})^{:n:} \|_{-\alpha} \Big\}dr,
	\end{aligned}
\end{equation}
where, by  the definition \eqref{stp2} and inserting \eqref{F-PSI} and \eqref{Sr-I} into \eqref{Tmild-2-yr-r}, we have for any positive $\lambda\le2$
 \begin{equation} \label{Yt-tmild-neRE}
 \begin{aligned}
 		& \|Y_r^{N,M}-Y_{\lfloor r \rfloor_\tau }^{N,M}\|_{\beta}   \lesssim_R 	
 	\tau^{\frac\lambda 2}  \int_{\tau\vee\tau}^{\lfloor r \rfloor_\tau\vee\tau}
 	(\lfloor r \rfloor_\tau-u)^{-\frac{\kappa+\lambda+\alpha+\beta}{2}}
 	{\lfloor u\rfloor}^{-2 \alpha -\kappa}_\tau du \\ &
 		+ \int^{r\vee\tau}_{\lfloor r\rfloor_\tau\vee\tau}
 	(r-u)^{-\frac{\kappa+\alpha+\beta}{2}}
 	  {\lfloor u\rfloor}^{-2 \alpha -\kappa}_\tau du
 	\end{aligned}
\end{equation}
 with $\kappa>0$ arbitrarily small. Set  the above $\lambda=2-{5\alpha-\beta}-4\kappa$. Using \eqref{Tineq2} in \eqref{Yt-tmild-neRE} we have
 \[
\|Y_r^{N,M}-Y_{\lfloor r \rfloor_\tau }^{N,M}\|_{\beta}
 	  \lesssim_R
	  \tau^{1-\frac{5\alpha+\beta}{2}-2\kappa}.\]
Then inserting into \eqref{L1ESTRE} and by \eqref{coef} and \eqref{Tineq2},  we have
 \begin{equation}\label{L1ESTReE}
	\|L_1\|_{\beta} \lesssim_R\tau^{ 1-\frac{5\alpha+\beta}2-\kappa}+\sum_{n=1}^3 \sup_{0\leq r \leq T}{\lfloor r \rfloor }_\tau ^{\mu_n}\|({\bar Z}_{r}^{N})^{:n:}-({\bar Z}_{\lfloor r\rfloor_\tau}^{N})^{:n:} \|_{-\alpha},
\end{equation}
with $\mu_1=\frac{\alpha}{2}-\delta_1+\kappa$ and $\mu_n=\frac{n^2+1}{2n}\alpha-\delta_n+\kappa$ for $n=2,3$.\\
{\textbf{Estimate for $L_2$: }} Similarly as to obtain \eqref{L2-est1}, by  the definition \eqref{stp2} we obtain
 for uniform $t\in[0, \varsigma_{N,M}^{R}]$
 with  $\kappa>0$ sufficiently small
\begin{equation}\label{L2-est1re}
			\|L_2\|_\beta
		\lesssim _R	  \tau   \int^{t\vee\tau}_\tau
			    	(t-r)^{-\frac{\alpha+\beta+\kappa}2}
    {\lfloor r\rfloor}_\tau^{-4\alpha-2\kappa}  	dr
		\lesssim_R \tau^{1-2\alpha-\kappa},
\end{equation}
where the last inequality follows from \eqref{coef} and \eqref{Tineq2}.

Putting together the above estimates for $I_1$, $I_2$, $L_1$ and $L_2$, we conclude for uniform $t\in[0, \varsigma_{N,M}^{R}]$
 with $\kappa>0$ sufficiently small, $\delta>0$, $\delta_n\in(0, \frac{\alpha}{2n})$ and $\tilde{\kappa}\in(0,1-\frac{5\alpha+\beta}{2})$
  \begin{equation*} \label{Y-YNM}
 \begin{aligned}
 \|Y_t-Y_{t}^{N,M}& \|_\beta \lesssim_R  \int_0^t   (t-s)^{-\frac{\alpha+\beta+\kappa}{2}} s^{-\alpha-\kappa }  \|Y_s-Y_s^{N,M}\|_{{\beta}}    ds +\frac{(\log N)^2}{N^{2-5\alpha-\beta-\delta}}
    \\&  +   \sum_{n=1}^3\sup_{0\leq s \leq T} s^{(n-1)(\alpha+\kappa)+\tilde\kappa}\|(\bar Z_s)^{:n:} -(\bar Z^{N}_s)^{:n:}\|_{{-\alpha}}\\&
   +\sum_{n=1}^3 \sup_{0\leq r \leq T} {\lfloor r \rfloor }_\tau^{\mu_n}\|({\bar Z}_{r}^{N})^{:n:}-({\bar Z}_{\lfloor r\rfloor_\tau}^{N})^{:n:} \|_{-\alpha}+
   \tau^{ 1-\frac{5\alpha+\beta}2-\kappa}.
 \end{aligned}
\end{equation*}
Taking expectation and using Gronwall's inequality, together with \eqref{coef} and \eqref{Tineq2}, \eqref{ZTN-ZN-n-ini} and Theorem \ref{Z-INI}  we have that \eqref{YY-YNM} follows for any $p\ge2$.
\end{proof}

Below we reconsider the error estimate between $Y^{N,M}$ and $Y$ in the same Besov space $\C^\beta$ with the  norm replaced by $t^{\gamma}\|\cdot\|_\beta$ for some $\gamma>0$. There exists some change for the values of  $\alpha,\beta$. Instead of the condition \eqref{coef}, we assume that
 the  positive constants $\alpha<\beta,\gamma$ satisfy
\begin{equation}\label{coef3}
3\alpha<1,~\frac{3\alpha+\beta}{2}<1,~
\max\big\{\frac{\alpha+\beta}{2},2\alpha\big\}<\gamma<1-\alpha.
\end{equation}

\begin{theorem}\label{MAIN-THM1-2}
Let $p\ge2$, $\alpha, \beta,\gamma $ satisfy \eqref{coef3} and
$\delta >0$.  Then for  uniform large $N,M\in \mathbb N$
\begin{equation} \label{Y-YNM-new-2}
 \begin{aligned}
 \Big(\E \sup_{t\in[0, T]}
 t^{\gamma p}\|Y_t-Y_{t}^{N,M}\|_\beta^p \Big)^{1/p}\lesssim &   N^{\delta-\alpha}+
 M^{\delta-{\alpha}/{6}}.
 \end{aligned}
\end{equation}
\end{theorem}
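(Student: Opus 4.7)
My approach adapts the proof of Theorem \ref{MAIN-THM1}: I keep the stopping-time reduction and the decomposition \eqref{decomp-2}, $Y_t - Y_t^{N,M} = I_1 + I_2 + L_1 + L_2$, but measure the error in the weighted norm $t^{\gamma}\|\cdot\|_\beta$. This weight absorbs the blow-up near $t=0$ coming from $\|\bar Z_t^{:n:}\|_{-\alpha} \lesssim t^{-(n-1)(\alpha+\kappa)}$, and it allows a larger spatial parameter $\lambda=\alpha-\delta$ in the Galerkin error \eqref{est-P-PN}, rather than the restriction $\lambda<2-5\alpha-\beta$ forced on us in Theorem \ref{MAIN-THM1}. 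By Jensen's inequality, \eqref{stp2-lit}, \eqref{est-Y-EHOLD} and \eqref{Test-Y-EHOLD}, it is enough to bound $\E\sup_{t\le \varsigma_{N,M}^{R}} t^{\gamma p}\|Y_t - Y_t^{N,M}\|_\beta^p$ for a fixed large $R>0$.

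For $I_1$, combining \eqref{est-P-PN} at level $\lambda=\alpha-\delta$, Schauder \eqref{Heat-Smooth} and \eqref{F-PSI}, I obtain an integrand of type $(t-s)^{-(\alpha+\beta+\lambda)/2} s^{-2\alpha-\kappa}$, so that $t^{\gamma}\|I_1\|_\beta \lesssim_R t^{\gamma+1-(5\alpha+\beta-\delta)/2-\kappa}(\log N)^2 N^{-\alpha+\delta}$, which is uniformly bounded under \eqref{coef3} once $\beta$ is taken sufficiently close to $\alpha$, since $\gamma>2\alpha$. The term $I_2$ I split via \eqref{F-PSI2} into a Gronwall-type contribution controlled by $\|Y_s-Y_s^{N,M}\|_\beta$ and a driving contribution $\sum_{n}\|\bar Z_s^{:n:}-(\bar Z_s^N)^{:n:}\|_{-\alpha}$; the latter is estimated by \eqref{ZTN-ZN-n-ini} with $\kappa_1>0$ chosen so small that the singularity in $s$ is absorbed by $t^{\gamma}$ via $\gamma>(\alpha+\beta)/2$ and $\gamma>2\alpha$, producing the spatial rate $N^{-\alpha+\delta}$ in the limit $\beta\to\alpha^{+}$.

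For the temporal terms I revisit \eqref{L1ESTRE}--\eqref{L2-est1re}: the taming remainder $L_2$ contributes $\tau^{1-2\alpha-\kappa}$, which dominates $\tau^{\alpha/6}$ since $3\alpha<1$. For $L_1$, the one-step increment bound \eqref{Yt-tmild-neRE} for $Y^{N,M}$ is combined with the sharp time-regularity estimate \eqref{TZTN-ZN-n-iniZMNt-s} of the Wick powers $(\bar Z^N)^{:n:}$; the binding case $n=3$ supplies the temporal rate $\tau^{\alpha/6-\delta}$, with the initial-time singularity again absorbed by $t^{\gamma}$. Substituting these four bounds, taking expectation, and applying a singular Gronwall argument to $t\mapsto \E\sup_{u\le t} u^{\gamma p}\|Y_u-Y_u^{N,M}\|_\beta^p$ then yields \eqref{Y-YNM-new-2}. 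The principal obstacle is to ensure integrability in $s$ of the Gronwall kernel $(t-s)^{-(\alpha+\beta+\kappa)/2} s^{-\alpha-\kappa-\gamma}$, which demands $\alpha+\gamma<1$; this is exactly the role of the upper bound $\gamma<1-\alpha$ in \eqref{coef3}, while the lower bounds $\gamma>\max\{(\alpha+\beta)/2,2\alpha\}$ are what allow the weight $t^\gamma$ to swallow the initial singularities arising in $I_1,I_2,L_1$.
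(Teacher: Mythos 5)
Your proposal follows essentially the same route as the paper: the same stopping-time reduction via $\varsigma_{N,M}^{R}$, the same decomposition $I_1+I_2+L_1+L_2$ measured in the weighted norm $t^{\gamma}\|\cdot\|_{\beta}$, and the same key inputs (\eqref{est-P-PN}, \eqref{ZTN-ZN-n-ini}, \eqref{TZTN-ZN-n-iniZMNt-s}, the increment bound \eqref{Yt-tmild-neRE}, and a singular Gronwall argument whose kernel integrability is exactly what forces $\gamma<1-\alpha$), with the spatial rate $N^{\delta-\alpha}$ coming from the Wick-power Galerkin error and the temporal rate $M^{\delta-\alpha/6}$ from the $n=3$ time-regularity estimate. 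The only (harmless) deviation is your choice $\lambda=\alpha-\delta$ in \eqref{est-P-PN} for $I_1$, where the paper takes $\lambda=2-\alpha-\beta-\delta$; both choices satisfy the relaxed integrability condition afforded by the weight $t^{\gamma}$ and yield a contribution no worse than $N^{\delta-\alpha}$, so the conclusion is unaffected.
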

\begin{proof}  As discussed in Theorem \ref{MAIN-THM1}, it is sufficient  to consider \eqref{Y-YNM-new-2} for $t\in[0, \varsigma_{N,M}^{R}]$ with fixed large $R$, $\varsigma_{N,M}^{R}$ is defined in \eqref{stp2} by  letting $\kappa_1=\bar\kappa\in(0,1-\frac{\alpha+\beta}{2})$.
Following  the decomposition \eqref{decomp-2}
we estimate the terms $I_1,I_2,L_1$ and $L_2$ again in the space $\mathcal C^{\beta}$ with the new norm $t^\gamma \|\cdot\|_{\beta}$.\\
{\textbf{Term  $I_1$:}} According to the condition \eqref{coef3}, for any $\delta\in(0,2-\alpha-\beta)$  we set $\lambda=2-\alpha-\beta-\delta$ ($\lambda>0$). Multiplying by $t^\gamma$ on both sides of \eqref{Test-I11} we have for uniform $t\in[0, \varsigma_{N,M}^{R}]$
   \begin{equation} \label{Test-I11-2}\begin{aligned}
   		 t^{\gamma}\|I_1\|_\beta
   		 \lesssim_R   \frac{(\log N)^2}{N^\lambda} t^{\gamma} \int_0^t
    (t-s)^{-\frac{\alpha+\beta+\lambda}{2}} s^{- 2\alpha-\kappa} ds
   	\lesssim_R  \frac{(\log N)^2}{N^{2-\alpha-\beta-\delta}},
   \end{aligned}\end{equation}
where the second inequality  follows by   \eqref{ineq} with $c=\gamma$, $a=\frac{\alpha+\beta+\lambda}{2}$ and $b=2\alpha-\kappa$, and \eqref{coef3} ensures that such $a,b,c$ satisfy the conditions in  \eqref{ineq}. \\  	
{\textbf{Term  $I_2$:}} Let positive $\bar{\kappa}<1-2\alpha$.
According to the condition \eqref{coef3}, for any $\delta\in(0,1-2\alpha)$  we set $\lambda=1-2\alpha-\delta$ ($\lambda>0$). Multiplying by $t^\gamma$ on both sides of \eqref{Test-I1} we have for uniform $t\in[0, \varsigma_{N,M}^{R}]$
  \begin{equation}\label{Test-I1-2}\begin{aligned}
           t^\gamma \|I_2\|_\beta
    \lesssim_R    &\int_0^t t^\gamma  (t-s)^{-\frac{\alpha+\beta+\kappa}{2}} s^{-\alpha-\kappa }  \|Y_s-Y_s^{N,M}\|_{{\beta}}    ds
    \\&+   \sum_{n=1}^3\sup_{0\leq s \leq T} s^{(n-1)(\alpha+\kappa)+\bar\kappa}\|(\bar Z_s)^{:n:} -(\bar Z^{N}_s)^{:n:}\|_{{-\alpha}},
 \end{aligned}\end{equation}
where the  inequality follows since by    \eqref{ineq}
\[
 t^\gamma \int_0^t (t-s)^{-\frac{\alpha+\beta+\kappa}{2}}
    s^{-2(\alpha+\kappa)-\bar\kappa}ds\lesssim t^{\nu_0}
\]
with $\nu_0:=1+\gamma-\frac{\alpha+\beta+\kappa}{2}-2(\alpha+\kappa)-\bar\kappa$, and  \eqref{coef3} and sufficiently small $\kappa>0$   ensures that $\nu_0$ is positive.\\
{\textbf{Term $L_1$:  }}
For any $\delta\in(0,1-\frac{\alpha+\beta}2)$,  we set $\lambda=2-\alpha-\beta-2\delta$ ($\lambda>0$) and $\kappa>0$ sufficiently small in \eqref{Yt-tmild-neRE}. By \eqref{Tineq2} for any $r\in[0,t]$ with  $t\in[0, \varsigma_{N,M}^{R}]$
 \begin{equation*} \label{Yt-tmild-neRE-2}
 \begin{aligned}
 	\lfloor r \rfloor_\tau ^\gamma	& \|Y_r^{N,M}-Y_{\lfloor r \rfloor_\tau }^{N,M}\|_{\beta}   \lesssim_R 	
 	\tau^{1-\frac {\alpha+\beta} 2-\delta}  .
 	\end{aligned}
\end{equation*}
Then inserting into \eqref{L1ESTRE}   we have for uniform $t\in[0, \varsigma_{N,M}^{R}]$
\begin{equation}\label{L1ESTRE-2}
	t^\gamma\|L_1\|_{\beta} \lesssim_R	\tau^{1-\frac {\alpha+\beta} 2-\delta}
	+
\sum_{n=1}^3 \sup_{0\leq r \leq T}{\lfloor r \rfloor }_\tau ^{\mu_n}\|({\bar Z}_{r}^{N})^{:n:}-({\bar Z}_{\lfloor r\rfloor_\tau}^{N})^{:n:} \|_{-\alpha},
\end{equation}
with $\mu_1=\frac{\alpha}{2}-\delta_1+\kappa$ and $\mu_n=\frac{n^2+1}{2n}\alpha-\delta_n+\kappa$ for $n=2,3$. The inequality follows since \eqref{Tineq2}, \eqref{coef3} and sufficiently small $\kappa>0$   ensures that there exists positive $\nu_1$  such that
\[
t^\gamma \int^{t\vee\tau}_\tau  (t-r)^{-\frac{\kappa+\alpha+\beta}2} \lfloor r\rfloor_\tau^{-\max\{\alpha+\kappa+\gamma,\mu_n,n=1,2,3\}} dr
			\lesssim t^{\nu_1}.
\]
{\textbf{Term $L_2$: }} Multiplying by $t^\gamma$ on both sides of \eqref{L2-est1re} we have for uniform $t\in[0, \varsigma_{N,M}^{R}]$
  with  $\kappa>0$ sufficiently small
\begin{equation}\label{L2-est1re-2}
			t^\gamma\|L_2\|_\beta
		\lesssim _R	  \tau^{1-\frac{\alpha+\beta}2}   t^\gamma \int^{t\vee\tau}_\tau
			    	(t-r)^{-\frac{\alpha+\beta+\kappa}2}
    {\lfloor r\rfloor}_\tau^{-\frac72\alpha+\beta-2\kappa}  	dr
		\lesssim_R \tau^{1-\frac{\alpha+\beta}2},
\end{equation}
where the last inequality follows from \eqref{coef3} and \eqref{Tineq2}.

Now combining with  the above results in \eqref{Test-I11-2}-\eqref{L2-est1re-2}, we conclude for uniform $t\in[0, \varsigma_{N,M}^{R}]$ with $\kappa>0$ sufficiently small, $\delta>0$, $\delta_n\in(0, \frac{\alpha}{2n})$ and $\bar{\kappa}\in(0,1-2\alpha)$
  \begin{equation*} \label{Y-YNM-22}
 \begin{aligned}
 t^\gamma\|Y_t-Y_{t}^{N,M}&\|_\beta \lesssim_R   t^\gamma\int_0^t   (t-s)^{-\frac{\alpha+\beta+\kappa}{2}} s^{-\alpha-\kappa }  \|Y_s-Y_s^{N,M}\|_{{\beta}}    ds +\frac{(\log N)^2}{N^{2-\alpha-\beta-\delta}}
    \\&  +   \sum_{n=1}^3\sup_{0\leq s \leq T} s^{(n-1)(\alpha+\kappa)+\bar\kappa}\|(\bar Z_s)^{:n:} -(\bar Z^{N}_s)^{:n:}\|_{{-\alpha}}\\&
   +\sum_{n=1}^3 \sup_{0\leq s \leq T} {\lfloor r \rfloor }_\tau^{\mu_n}\|({\bar Z}_{r}^{N})^{:n:}-({\bar Z}_{\lfloor r\rfloor_\tau}^{N})^{:n:} \|_{-\alpha}
   +\tau^{ 1-\frac{\alpha+\beta}2-\delta}.
 \end{aligned}
\end{equation*}
Taking expectation and using Gronwall's inequality, together with \eqref{coef3} and \eqref{Tineq2}, \eqref{ZTN-ZN-n-ini} and Theorem \ref{Z-INI}  we obtain that for any $p\ge2$
\[
 \Big(\E \sup_{t\in[0, T]}
 t^{\gamma p}\|Y_t-Y_{t}^{N,M}\|_\beta^p \Big)^{1/p}\lesssim    N^{\delta-\min\{2-\alpha-\beta,\alpha\}}+
 M^{\delta-\min\{1-\frac{\alpha+\beta}2,\frac{\alpha}{6}\}}.
\]
Then \eqref{Y-YNM-new-2} holds immediately by \eqref{coef3}.
\end{proof}

As a consequence of \eqref{nNMX}, \eqref{ZTN-ZN-n-ini} with $n=1$ and \eqref{Y-YNM-new-2} with $\beta$ close to  $\alpha$ ($\beta>\alpha$), together with the fact that $\|\cdot\|_{a}\lesssim \|\cdot\|_{a+\lambda}$ with any $\lambda>0$ and $a\in\mathbb R$, we   immediately obtain  the time and space convergence rates  for $X^{N,M}$ in \eqref{nNMX}, which is the main result throughout our paper.

\begin{theorem}\label{MAIN-THM1111}
Let $X_0\in\mathcal C^{-\alpha}$ with $\alpha\in (0,1/3)$, $\gamma>1-3\alpha$ and $p\ge2$. Then for  any  $\delta >0$
\begin{equation} \label{X-XNM}
 \Big(\E \sup_{t\in[0, T]}
 t^{\gamma p}\|X_t-X_{t}^{N,M}\|_{-\alpha}^p \Big)^{1/p} \lesssim    N^{\delta-\alpha}+
 M^{\delta-{\alpha}/{6}}
\end{equation}
 for uniform large $N,M\in \mathbb N$.
\end{theorem}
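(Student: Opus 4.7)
The plan is to derive \eqref{X-XNM} as a direct corollary of the two error bounds already established, by splitting the total error via the decomposition
\[
X_t-X_t^{N,M}=\bigl(Y_t-Y_t^{N,M}\bigr)+\bigl(\bar Z_t-\bar Z_t^N\bigr)
\]
which follows from \eqref{sum} together with the definition \eqref{nNMX}. Applying the triangle inequality in $\C^{-\alpha}$, it suffices to control each summand separately in the weighted $p$-th moment.

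For the nonlinear component, I would first use the Besov embedding $\|\cdot\|_{-\alpha}\lesssim\|\cdot\|_{\beta}$, valid for any $\beta>-\alpha$ (Lemma \ref{V-SUB-H}(i)), and then invoke Theorem \ref{MAIN-THM1-2} in the space $\C^{\beta}$. Because $\alpha\in(0,1/3)$ one has $3\alpha<1$, so one can pick $\beta>\alpha$ arbitrarily close to $\alpha$ for which the entire admissibility set \eqref{coef3} is nonempty; the hypothesis $\gamma>1-3\alpha$ (together with the understood upper bound $\gamma<1-\alpha$) then allows the $\gamma$-constraint in \eqref{coef3} to be satisfied. Moreover, for $\beta$ close enough to $\alpha$ one has $\min\{2-\alpha-\beta,\alpha\}=\alpha$ and $\min\{1-(\alpha+\beta)/2,\alpha/6\}=\alpha/6$, so \eqref{Y-YNM-new-2} yields
\[
\Bigl(\E\sup_{t\in[0,T]}t^{\gamma p}\|Y_t-Y_t^{N,M}\|_{-\alpha}^p\Bigr)^{1/p}\lesssim N^{\delta-\alpha}+M^{\delta-\alpha/6}.
\]

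For the linear component, I would apply \eqref{ZTN-ZN-n-ini} with $n=1$ and some $\kappa_1\in(0,\alpha/2]$ chosen so that $\kappa_1\le\gamma$, which is allowed under the present hypotheses. Bounding $t^{\gamma}\le T^{\gamma-\kappa_1}t^{\kappa_1}$ and inserting into \eqref{ZTN-ZN-n-ini} produces
\[
\Bigl(\E\sup_{t\in[0,T]}t^{\gamma p}\|\bar Z_t-\bar Z_t^N\|_{-\alpha}^p\Bigr)^{1/p}\lesssim (\log N)^{2}N^{-2\kappa_1}+N^{-(\alpha-\delta)},
\]
and selecting $\kappa_1$ arbitrarily close to $\alpha/2$, while absorbing the logarithmic factor into the flexible $\delta>0$, gives the desired rate $N^{\delta-\alpha}$. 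Summing the two estimates via the triangle inequality concludes the proof.

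There is no substantial analytic obstacle here: all the technical work has already been done in Theorem \ref{MAIN-THM1-2} and in the Galerkin regularity bounds of Section \ref{SEC3}. The only real step is the bookkeeping that verifies the compatibility of the parameters $\alpha,\beta,\gamma$ in \eqref{coef3} with the present hypotheses $\alpha\in(0,1/3)$ and $\gamma>1-3\alpha$ once $\beta$ is chosen close enough to $\alpha$, so that both sides of the estimate collapse to the clean rates $N^{\delta-\alpha}$ and $M^{\delta-\alpha/6}$ with $\delta>0$ arbitrary.
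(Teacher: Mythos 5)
Your proposal is correct and follows essentially the same route as the paper: the paper likewise obtains Theorem \ref{MAIN-THM1111} immediately from the decomposition $X-X^{N,M}=(Y-Y^{N,M})+(\bar Z-\bar Z^N)$, applying Theorem \ref{MAIN-THM1-2} with $\beta$ close to $\alpha$ (via the embedding $\|\cdot\|_{-\alpha}\lesssim\|\cdot\|_{\beta}$) and \eqref{ZTN-ZN-n-ini} with $n=1$ for the linear part. Your parameter bookkeeping (checking \eqref{coef3} and collapsing the minima to $\alpha$ and $\alpha/6$) is exactly the verification the paper leaves implicit.
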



\appendix
\renewcommand{\appendixname}{Appendix~\Alph{section}}
\renewcommand{\theequation}{A.\arabic{equation}}
\section{Space-time white noise and Wiener chaos} \label{NOISE}
\begin{definition}
\label{WHITE-NOISE}
Let $\{\xi(\phi)\}_{\phi\in L^2(\mathbb R\times \mathbb T^d)}$ be a family of centered Gaussian random variables on a probability space $(\Omega,\mathcal F,\mathbb P)$ such that
\[
\mathbb E(\xi(\phi)\xi(\psi))=\langle\phi,\psi\rangle_{L^2(\mathbb R\times \mathbb T^d)},
\]
for all $\psi,\phi\in L^2(\mathbb R\times \mathbb T^d)$. Then $\xi$ is called a space-time white noise on $\mathbb R\times \mathbb T^d$. We interpret $\xi(\phi)$ as a stochastic integral and write
\[
\int_{\mathbb R\times \mathbb T^d}\psi(t,x)\xi(dt,dx):=\xi(\psi),~~\psi\in L^2(\mathbb R\times \mathbb T^d).
\]
\end{definition}
For any $n\in\mathbb N$, the multiple stochastic integrals (see \cite[Chapter 1]{Nu2006}) on $\mathbb R\times \mathbb T^d$ are defined for all symmetric functions $f$ in $L^2(\mathbb R\times \mathbb T^d)$, i.e. functions such that
\[
 f(z_1,\ldots,z_n)=f(z_{\sigma(1)},\ldots,z_{\sigma(n)}),~~z_i\in \mathbb R\times \mathbb T^d, j=1,2,\ldots,n,
\]
for any permutation $(\sigma(1),\ldots,\sigma(n))$ of $(1,\ldots,n)$. For such a symmetric function $f$ we denote its $n$-th interated stochastic integral by
\[
I_n(f):=\int_{(\mathbb R\times \mathbb T^d)^n}f(z_1,\ldots,z_n)\xi(\otimes_{i=1}^nds_i,\otimes_{i=1}^ndx_i),~z_i=(t_i,x_i)\in\mathbb R\times \mathbb T^d.
\]
\begin{theorem}{\cite[Theorem 1.1.2, Section 1.4]{Nu2006}}
\label{decom-wick}
Let $f$ be any symmetric function in $L^2((\mathbb R\times \mathbb T^d)^n)$. Then
\begin{equation}\label{iTO-EQ}
\mathbb E(I_n(f))^2=n!\|f\|_{L^2((\mathbb R\times \mathbb T^d)^n)}^2
\end{equation}
and
\begin{equation}\label{iTO-iEQ}
\mathbb E|I_n(f)|^p\leq (p-1)^{\frac{np}{2}}(\mathbb E|I_n(f)|^2)^{\frac{p}{2}}
\end{equation}
for every $p\geq 2$.
\end{theorem}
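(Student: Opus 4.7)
For the isometry identity \eqref{iTO-EQ}, I would follow the standard two-step construction of the multiple Wiener--It\^o integral. First, define $I_n(f)$ on the class of symmetric simple functions
\[
f = \sum_{i_1,\dots,i_n} c_{i_1\cdots i_n}\,\mathbf{1}_{A_{i_1}\times\cdots\times A_{i_n}},
\]
where $\{A_i\}$ are pairwise disjoint Borel subsets of $\mathbb{R}\times\mathbb{T}^d$ of finite Lebesgue measure, the coefficients $c_{i_1\cdots i_n}$ are symmetric in their indices, and $c_{i_1\cdots i_n}=0$ whenever two indices coincide. The natural definition $I_n(f) := \sum c_{i_1\cdots i_n}\,\xi(\mathbf{1}_{A_{i_1}})\cdots\xi(\mathbf{1}_{A_{i_n}})$ makes sense because, by Definition \ref{WHITE-NOISE}, the variables $\xi(\mathbf{1}_{A_i})$ are independent centered Gaussians with variances $|A_i|$. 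Expanding the square, only diagonal terms (in the sense that the multi-indices in the two copies agree as multisets) survive; since all coordinates are distinct, this forces the multi-indices to be equal up to a permutation, producing $n!$ copies and yielding $\E(I_n(f))^2 = n!\,\|f\|_{L^2((\mathbb{R}\times\mathbb{T}^d)^n)}^2$. Density of symmetric simple functions in $L^2_{\mathrm{sym}}((\mathbb{R}\times\mathbb{T}^d)^n)$ then extends both the definition of $I_n$ and the identity \eqref{iTO-EQ} to all symmetric $L^2$ kernels.

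For the $L^p$ bound \eqref{iTO-iEQ}, the plan is to invoke Nelson's hypercontractivity for the Ornstein--Uhlenbeck semigroup $(T_t)_{t\ge0}$ associated with the Gaussian family $\xi$. The algebraic fact driving the argument is that $T_t$ acts diagonally on the Wiener chaos decomposition of $L^2(\Omega,\sigma(\xi),\mathbb{P})$ as $T_t I_n(f) = e^{-nt} I_n(f)$, so that $I_n(f)$ is an eigenfunction with known eigenvalue. Nelson's theorem asserts that $T_t$ is a contraction from $L^q$ to $L^p$ whenever $e^{2t}\ge (p-1)/(q-1)$. Specializing to $q=2$, $p\ge 2$, and choosing $t$ with $e^{2t}=p-1$, one obtains
\[
\|I_n(f)\|_{L^p} \;=\; e^{nt}\,\|T_t I_n(f)\|_{L^p} \;\le\; e^{nt}\,\|I_n(f)\|_{L^2} \;=\; (p-1)^{n/2}\,\bigl(\E|I_n(f)|^2\bigr)^{1/2},
\]
and raising both sides to the $p$-th power gives exactly \eqref{iTO-iEQ}.

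The main obstacle is Nelson's hypercontractivity itself, a genuinely nontrivial analytic input. Its standard proofs either reduce, via tensorization and a central-limit argument, to a two-point Bernoulli inequality that is checked by a direct algebraic computation on linear and quadratic functionals, or else rely on Gross's logarithmic Sobolev inequality for the Gaussian measure together with Gross's equivalence between log-Sobolev inequalities and hypercontractivity semigroups. Since the statement here is quoted verbatim from \cite[Theorem 1.1.2 and Section 1.4]{Nu2006}, I would treat the hypercontractivity input as a black box and present only the two reductions above: the chaos-level construction that proves the isometry with the combinatorial factor $n!$, and the diagonal action $T_t I_n(f)=e^{-nt}I_n(f)$ together with the single optimization $e^{2t}=p-1$ that extracts the explicit constant $(p-1)^{np/2}$.
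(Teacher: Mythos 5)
Your proposal is correct and reproduces the standard argument from the cited source: the paper itself offers no proof of this statement, deferring entirely to \cite[Theorem 1.1.2, Section 1.4]{Nu2006}, and your two-step reduction (the simple-function/combinatorial computation of the isometry with the $n!$ factor, followed by the diagonal action $T_t I_n(f)=e^{-nt}I_n(f)$ and the optimization $e^{2t}=p-1$ in Nelson's hypercontractivity) is exactly how Nualart proves it. Treating hypercontractivity as a black box is appropriate here, since the theorem is quoted rather than reproved in the paper.
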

\bigskip
\renewcommand{\theequation}{B.\arabic{equation}}
\section{Functions with prescribed singularities} \label{kernal}
For  symmetric kernels $K_1,K_2\colon\mathbb Z^2\rightarrow(0,\infty)$, we denote its convolution
\[
  K_1\star K_2(m):=\sum_{l\in\mathbb Z^2} K_1(m-l)K_2(l)
\]
and for $N\in\mathbb N$ we set
\[
  K_1\star_{\leq N} K_2(m):=\sum_{|l|\leq N} K_1(m-l)K_2(l),~~
  K_1\star_{> N} K_2(m):=K_1\star K_2-K_1\star{_{\leq N}} K_2.
\]
For convolutions of the same kernel, we introduce
\begin{equation}\label{n}
   K\star^{1} K:=K,~ K\star^{n} K:= K\star({K\star^{n-1} K}),
\end{equation}
\begin{equation}\label{<Nn}
 K{\star^1}_{\leq N} K:= K, ~K{\star^n}_{\leq N} K:=K\star_{\leq N}({K{\star^{n-1}}_{\leq N} K}),
\end{equation}
where by simple calculation we actually obtain
 \begin{align*}
  K{\star^n}_{\leq N} K(m) =&\sum_{|l_i|\leq N,i=1,\ldots,n-1}  K(m-l_{n-1}) \prod^{n-1}_{i=1} K(l_i-l_{i-1}),
\end{align*}
with  the convention that $l_0 = 0$. Similarly for every $n\geq 2$, we denote
\begin{equation}\label{>Nn}
  K{\star^n}_{> N} K(m):=K\star^n K-K{\star^n}_{\leq N} K.
\end{equation}
Following the technique of \cite[Lemma 10.14]{Hai14}, we have the following estimates.
\begin{lemma}\label{KENEL-ES}{\cite[Corollary C.3]{TW20182}\cite[Lemma A.3]{MZ2019}}
Let $K^\gamma\colon\mathbb Z^2\rightarrow(0,\infty)$ be a symmetric kernel such that $K^\gamma(m)\lesssim \frac{1}{(1 + |m|^2)^{1-\gamma}}$, $\gamma\in[0,\frac{1}{n})$ for $n\in \mathbb N$.\\
{\rm(i)} If $\gamma>0$ then
\begin{equation*}\label{Kn}
\max\big\{K^\gamma \star^{n} K^\gamma (m),~~\sup_{N\geq 1}K^\gamma{\star{^{n}}}_{\leq N} K^\gamma (m)\big\}\lesssim \frac{1}{(1 + |m|^2)^{1-n\gamma}},
\end{equation*}
\begin{equation*}\label{K>Nn}
K^\gamma{\star{^{n}}}_{> N} K^\gamma (m) \lesssim  \left\{
\begin{aligned}
  &\frac{1}{(1 + |m|^2)^{1-n\gamma}},~~\text{if}~|m|\geq N,
\\&\frac{1}{(1 + |N|^2)^{1-n\gamma}},~~\text{if}~|m|< N.
\end{aligned}
\right.\end{equation*}
 {\rm(ii)} If  $\gamma=0$ then for every $\epsilon\in(0,1)$
\begin{equation*}\label{Kn_2}
\max\big\{ K^0\star^{n} K^0 (m),~~\sup_{N\geq 1}K^0{\star{^{n}}}_{\leq N} K^0 (m) \big\}
\lesssim   \frac{1}{(1 + |m|^2)^{1-\epsilon}},
\end{equation*}
\begin{equation*}\label{K>Nn_2}
K^0{\star{^{n}}}_{> N} K^0 (m) \lesssim \left\{
\begin{aligned}
  &\frac{1}{(1 + |m|^2)^{1-\epsilon}},~~\text{if}~|m|\geq N,
\\&\frac{1}{(1 + |N|^2)^{1-\epsilon}},~~\text{if}~|m|< N.
\end{aligned}
\right.\end{equation*}
\end{lemma}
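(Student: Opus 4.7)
The plan is to reduce everything to a single binary convolution bound and then induct on $n$. The key lemma is: for kernels $K_1, K_2 \colon \mathbb Z^2 \to (0,\infty)$ with $K_i(m) \lesssim (1+|m|^2)^{-(1-\gamma_i)}$, $\gamma_i \in [0,1)$, and $\gamma_1+\gamma_2 < 1$, one has $(K_1\star K_2)(m)\lesssim (1+|m|^2)^{-(1-\gamma_1-\gamma_2)}$ when $\gamma_1+\gamma_2 > 0$, with a logarithmic factor (absorbable into any $\epsilon>0$) when $\gamma_1 = \gamma_2 = 0$. Given this, the bound on $K^\gamma\star^n K^\gamma$ in both (i) and (ii) follows by induction on $n$: the hypothesis $\gamma\in[0,1/n)$ is exactly what keeps $(n-1)\gamma+\gamma = n\gamma < 1$ inside the admissible range of the binary bound at every step. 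In (ii), the logarithmic losses absorb into the $\epsilon$ on each application.

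The binary bound itself is proved by a three-region decomposition of $\sum_l K_1(m-l) K_2(l)$: (A) $|l|\leq |m|/2$, where $K_1(m-l)\lesssim (1+|m|^2)^{-(1-\gamma_1)}$ factors out and the residual sum of $K_2$ over the ball contributes $\lesssim (1+|m|^2)^{\gamma_2}$ (a logarithm when $\gamma_2 = 0$); (B) $|m-l|\leq|m|/2$, symmetric; (C) both $|l|, |m-l|\geq |m|/2$, where the scaling $l=|m|z$ identifies the sum with a Riemann sum for a Beta-type integral on $\mathbb R^2$ that converges precisely under $\gamma_1+\gamma_2<1$. Adding the three contributions yields the claimed power-law decay. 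This is the classical argument of \cite[Lemma 10.14]{Hai14} transposed to $\mathbb Z^2$ and the torus, as already carried out in the cited references \cite{TW20182,MZ2019}.

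The truncated iterated convolutions $K^\gamma\star^n_{\leq N} K^\gamma$ are pointwise bounded by $K^\gamma\star^n K^\gamma$, so the $\sup_{N\geq 1}$ assertions in both (i) and (ii) follow for free. For the tails $K^\gamma\star^n_{>N} K^\gamma(m) = K^\gamma\star^n K^\gamma(m)-K^\gamma\star^n_{\leq N} K^\gamma(m)$ in the regime $|m|\geq N$, the same global bound applies directly, since $(1+|m|^2)^{-(1-n\gamma)}$ is exactly the target. The delicate case, and the main obstacle, is $|m| < N$, where the required bound $(1+N^2)^{-(1-n\gamma)}$ is strictly stronger than what the global estimate provides.

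To handle this case I would induct on $n$ via the identity
$$K^\gamma\star^n_{>N} K^\gamma \;=\; K^\gamma\star_{>N} L_{n-1} \;+\; K^\gamma\star_{\leq N}\bigl(L_{n-1}-L_{n-1}^{\leq N}\bigr),$$
with $L_k:=K^\gamma\star^k K^\gamma$ and $L_k^{\leq N}$ its iterated truncation. In the first summand the restriction $|l|>N$ combined with $|m|<N$ forces $|m-l|\gtrsim |l|>N$, so after inserting the pointwise bounds on $K^\gamma(m-l)$ and $L_{n-1}(l)$ the tail sum $\sum_{|l|>N}(1+|l|^2)^{-(2-\gamma-(n-1)\gamma)}$ collapses to a factor $(1+N^2)^{-(1-n\gamma)}$ by the standard one-dimensional dyadic computation. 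The second summand is $K^\gamma\star_{\leq N} L_{n-1}^{>N}$, which is controlled by the inductive hypothesis on $L_{n-1}^{>N}$ combined with the binary bound used to absorb the extra convolution with $K^\gamma$. The main technical difficulty is the bookkeeping needed to ensure the exponent $1-n\gamma$ emerges without any $\epsilon$-loss in case (i), and to keep the logarithmic losses in case (ii) at a single power of $(1+N^2)^{\epsilon}$ rather than accumulating them at each inductive step; this is where the precise three-region splitting above must be repeated carefully with $K$ and $L_{n-1}$ playing the roles of $K_1$ and $K_2$.
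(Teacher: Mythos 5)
The paper itself gives no proof of this lemma: it is quoted from \cite[Corollary C.3]{TW20182} and \cite[Lemma A.3]{MZ2019} with only the remark that it follows the technique of \cite[Lemma 10.14]{Hai14}. Your reconstruction --- a binary convolution bound proved by a three-region decomposition, induction on $n$ under the constraint $n\gamma<1$, the truncated sums handled by positivity, and the tails by the identity $K^\gamma\star^n_{>N}K^\gamma = K^\gamma\star_{>N}L_{n-1}+K^\gamma\star_{\le N}L_{n-1}^{>N}$ --- is exactly the route those references take, so there is no genuine divergence of method to report. The binary bound, the induction, the absorption of logarithms into $\epsilon$ when $\gamma=0$, the $\sup_N$ claim via monotonicity of the positive sums, and the case $|m|\ge N$ of the tail estimate are all correct as written.

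One step fails as stated. In the first summand of your tail decomposition you assert that $|l|>N$ together with $|m|<N$ forces $|m-l|\gtrsim|l|$. This is false: take $|m|=N-1$ and $l$ nearly parallel to $m$ with $|l|=N+1$, so that $|m-l|$ is of order $1$ while $|l|\approx N$. You therefore cannot replace $K^\gamma(m-l)$ by $(1+|l|^2)^{-(1-\gamma)}$ throughout the region $|l|>N$, and the ``one-dimensional dyadic computation'' does not apply directly. The estimate is recoverable by one further split: for $|l|>2N$ one does have $|m-l|\ge|l|-N\ge|l|/2$ and your argument goes through; for $N<|l|\le 2N$ one instead bounds $L_{n-1}(l)\lesssim(1+N^2)^{-(1-(n-1)\gamma)}$ uniformly and uses $\sum_{N<|l|\le 2N}K^\gamma(m-l)\lesssim N^{2\gamma}$ (the sum of $K^\gamma$ over any set of $O(N^2)$ lattice points is maximized on a ball centered at the singularity), which again yields $(1+N^2)^{-(1-n\gamma)}$. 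The same counting bound is what makes your second summand work: there $|l|\le N$ gives $L_{n-1}^{>N}(l)\lesssim(1+N^2)^{-(1-(n-1)\gamma)}$ by the inductive hypothesis, and $\sum_{|l|\le N}K^\gamma(m-l)\lesssim N^{2\gamma}$ closes the estimate. With that repair, and the corresponding $\epsilon$-bookkeeping in the $\gamma=0$ case, the proof is complete.
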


\end{document}